\newcommand{\mc}{\mathcal}
\newcommand{\comp}{\mathrm{comp}}
\newcommand{\Gr}{\mathrm{Gr}}
\newcommand{\GL}{\mathrm{GL}}
\DeclareMathOperator{\Tr}{Tr}
\DeclareMathOperator{\im}{Im}
\DeclareMathOperator{\re}{Re}
\DeclareMathOperator{\supp}{supp}
\DeclareMathOperator{\vol}{vol}
\DeclareMathOperator{\id}{Id}
\DeclareMathOperator{\Lie}{\mathcal{L}}
\DeclareMathOperator{\e}{\mathbf{e}}
\theoremstyle{plain}
\newtheorem{theorem}{Theorem}[section]
\newtheorem{definition}[theorem]{Definition}
\newtheorem{lemma}[theorem]{Lemma}
\newtheorem{remark}[theorem]{Remark}
\newtheorem{proposition}[theorem]{Proposition}
\numberwithin{equation}{section}
\begin{document}

\title{Calder\'on problem for systems via complex parallel transport}
\author[M. Ceki\'{c}]{Mihajlo Ceki\'{c}}
\date{\today}
\address{Institut f\"ur Mathematik, Universit\"at Z\"urich, Winterthurerstrasse 190, CH-8057 Z\"urich, Switzerland}
\email{mihajlo.cekic@math.uzh.ch}

\begin{abstract}
    We consider the Calder\'on problem for systems with unknown zeroth and first order terms, and improve on previously known results. More precisely, let $(M, g)$ be a compact Riemannian manifold with boundary, let $A$ be a connection matrix on $E = M \times \mathbb{C}^r$ and let $Q$ be a matrix potential. Let $\Lambda_{A, Q}$ be the \emph{Dirichlet-to-Neumann map} of the associated connection Laplacian with a potential. Under the assumption that $(M, g)$ is isometrically contained in the interior of $(\mathbb{R}^2 \times M_0, c(e \oplus g_0))$, where $(M_0, g_0)$ is an \emph{arbitrary} compact Riemannian manifold with boundary, $e$ is the Euclidean metric on $\mathbb{R}^2$, and $c > 0$,  we show that $\Lambda_{A, Q}$ uniquely determines $(A, Q)$ up to natural gauge invariances. Moreover, we introduce new concepts of \emph{complex ray transform} and \emph{complex parallel transport} problem, and study their fundamental properties and relations to the Calder\'on problem.
\end{abstract}

\maketitle

\section{Introduction} 
One of the most prominent and well-studied inverse problems is the Calder\'on problem (see \cite{uhlmann-09}), where one is interested in reconstructing an unknown elliptic PDE coefficient from boundary measurements. The foundational work of Sylvester-Uhlmann \cite{sylvester-uhlmann-87} was based on probing the domain of interest with \emph{complex geometric optics (CGO)} solutions, and after much effort this approach was generalised to transversally anisotropic geometries in \cite{dos-santos-ferreira-keng-salo-uhlmann-09}. The latter approach was applied to the setting of systems (or connections) in \cite{cekic-17} with limited success and the main aim of the present paper is to push this further to higher rank. This generalises the earlier work of Eskin \cite{eskin-01} to anisotropic geometries.

\subsection{Calder\'on problem for systems} Let $(M, g)$ be a smooth, connected, compact, Riemannian manifold with boundary. We consider the trivial rank $r \in \mathbb{Z}_{\geq 1}$ vector bundle $E = M \times \mathbb{C}^r$ equipped with the standard Hermitian inner product in its fibres. A \emph{connection} (resp. \emph{unitary connection}) $A$ on $E$ can be identified with a smoothly varying (resp. skew-Hermitian) matrix of $1$-forms, giving rise to a \emph{covariant derivative} $d_A := d + A$, mapping the space of vector valued sections $C^\infty(M, \mathbb{C}^r)$ to the space of vector valued $1$-forms $C^\infty(M, T^*M \otimes \mathbb{C}^r)$. Consider also a smooth matrix potential $Q \in C^\infty(M, \mathbb{C}^{r \times r})$. Define the operator
\[
    \Lie_{A, Q} := d_{-A^*}^* d_A + Q,
\]
where $A^*$ denotes the Hermitian adjoint of $A$, and $d_{-A^*}^*$ denotes the formal adjoint of $d_{-A^*}$ with respect to the natural $L^2$ structures (see Proposition \ref{prop:laplace-local-coord} for a local coordinate expression). Assuming that $0$ is not a Dirichlet eigenvalue of $\Lie_{A, Q}$ on $M$, it is possible to define the \emph{Dirichlet-to-Neumann (DN) map} $\Lambda_{A, Q}: C^\infty(\partial M, \mathbb{C}^r) \to C^\infty(\partial M, \mathbb{C}^r)$ as $\Lambda_{A, Q} f := \iota_\nu d_A u|_{\partial M}$, where $u$ is the unique smooth solution to the Dirichlet problem
\[
    \Lie_{A, Q}u = 0, \quad u|_{\partial M} = f,
\]
and $\nu$ is the outer boundary normal to $\partial M$, and $\iota_\nu$ denotes contraction with $\nu$. If $G \in C^\infty(M, \GL_r(\mathbb{C}))$, where $\GL_r(\mathbb{C})$ is the group of invertible complex $r \times r$ matrices, it is straightforward to show (see Proposition \ref{prop:DN-invariance}) that $G^{-1} \Lie_{A, Q} G = \Lie_{G^*A, G^*Q}$, where
\[
    G^*A := G^{-1} dG + G^{-1} A G, \quad G^*Q := G^{-1} Q G.
\]
Note that if $G$ takes values in the unitary group $\mathrm{U}(r)$ and $A$ is unitary, then $G^*A$ is also a unitary connection. We refer to $G$ as a \emph{gauge transformation}. Assuming $G|_{\partial M} = \id$, it follows that $\Lambda_{G^*A, G^* Q} = \Lambda_{A, Q}$, and the \emph{Calder\'on problem for systems (connections)} becomes: can one uniquely determine the pair $(A, Q)$ up to such gauge transformations $G$ from the DN map $\Lambda_{A, Q}$? This is partially answered in the following theorem. We denote by $e$ the Euclidean metric on $\mathbb{R}^2$ and by \emph{isometrically contained} we mean that there is an embedding of codimension zero which is an isometry onto image.

\begin{theorem}\label{thm:main-theorem-gaussian-beams}
    Let $(M, g)$ be a smooth connected compact Riemannian manifold with boundary, isometrically contained in the interior of $(\mathbb{R}^2 \times M_0, c(e \oplus g_0))$, where $(M_0, g_0)$ is a smooth compact Riemannian manifold with boundary and dimension $\dim M_0 \geq 1$, and $c \in C^\infty(\mathbb{R}^2 \times M_0, \mathbb{R}_{> 0})$ is a conformal factor. Let $r \in \mathbb{Z}_{\geq 1}$, let $A_1$ and $A_2$ be two smooth connections on $E = M \times \mathbb{C}^r$, and let $Q_1, Q_2 \in C^\infty(M, \mathbb{C}^{r \times r})$ be two smooth matrix potentials. Assume that the Dirichlet-to-Neumann maps agree, that is, $\Lambda_{A_1, Q_1} = \Lambda_{A_2, Q_2}$ on $C^\infty(\partial M, \mathbb{C}^r)$. Then, there exists a gauge transformation $G \in C^\infty(M, \GL_r(\mathbb{C}))$, with $G|_{\partial M} = \id$, such that
	\[
		G^{-1} dG + G^{-1} A_1 G = A_2, \quad G^{-1} Q_1 G = Q_2.
	\]	
    Moreover, if $A_1$ and $A_2$ are assumed to be unitary connections, then we may require that $G$ takes values in $\mathrm{U}(r)$.
\end{theorem}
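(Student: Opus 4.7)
My plan follows the CGO methodology for the anisotropic Calder\'on problem, adapted to the matrix (non-abelian) setting via the complex parallel transport framework introduced in the paper.

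\emph{Boundary determination and integral identity.} First I would carry out a standard boundary determination to produce a gauge transformation $G_0$ with $G_0|_{\partial M}=\id$ so that, after replacing $(A_1,Q_1)$ by $(G_0^*A_1, G_0^*Q_1)$, the pairs $(A_j, Q_j)$ agree to infinite order on $\partial M$. This allows extension of both pairs across $\partial M$ into the ambient cylinder. Using the conformal scaling law of $\Lie_{A,Q}$ to absorb the factor $c$ into an effective potential, I may assume the model is $(\mathbb{R}^2\times M_0, e\oplus g_0)$. Equality of DN maps then yields, via Green's identity, an interior integral identity involving $A_1-A_2$ and $Q_1-Q_2$ paired against products of solutions $u_1$ of $\Lie_{A_1,Q_1} u_1 = 0$ and $v_2$ of $\Lie_{-A_2^*, Q_2^*} v_2 = 0$.

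\emph{CGO solutions via Gaussian beams.} On the cylinder with coordinates $(x_1,x_2,y)$ I would use the complex phase $\Phi = x_1 + ix_2$ and seek
\[
u_1 = e^{-s\Phi}(a_0 + s^{-1} r_1), \qquad v_2 = e^{s\overline{\Phi}}(b_0 + s^{-1} r_2), \qquad s\to\infty,
\]
with matrix-valued amplitudes $a_0, b_0$ concentrating as Gaussian beams along a non-tangential unit-speed geodesic $\gamma$ in $M_0$. The principal transport equations along $\gamma$ are matrix ODEs whose fundamental solutions are the parallel transports $C_1(t)$ for $d_{A_1}$ and $C_2(t)$ for $d_{-A_2^*}$. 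Carleman estimates for the connection Laplacian on the cylinder handle the remainders $r_j$.

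\emph{Reduction to the complex ray transform.} Substituting the CGOs into the integral identity, taking $s\to\infty$, and applying a partial Fourier transform in $(x_1, x_2)$ localises the identity to the beam and produces, for every non-tangential geodesic $\gamma:[0,L]\to M_0$ and free choices of initial amplitudes, the vanishing
\[
\int_0^L C_2(t)^{-1}\bigl(\alpha_\gamma(t) + q(\gamma(t))\bigr)C_1(t)\, dt = 0,
\]
where $\alpha_\gamma$ encodes the contraction of $A_1-A_2$ along $\dot\gamma$ together with a holomorphic direction in $\mathbb{R}^2$, and $q$ is a modified potential combining $Q_1-Q_2$ with quadratic terms built from $A_1-A_2$. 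This is exactly the vanishing of the \emph{complex parallel transport ray transform} announced in the abstract.

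\emph{Inversion.} Finally, I would invoke the inversion theorem for the complex parallel transport problem --- the main technical input of the paper --- to produce a gauge $G \in C^\infty(M,\GL_r(\mathbb{C}))$, with $G|_{\partial M}=\id$, satisfying $G^*A_1 = A_2$ and $G^*Q_1 = Q_2$; the unitary case is obtained because the transport equations and the gauge recovery preserve the skew-Hermitian structure when the $A_j$ are unitary. The main obstacle is precisely this last step: in the scalar/rank-one case the transform is abelian and inverts with classical tools, but for matrix connections the transports $C_j(t)$ do not commute, so the relation is genuinely nonlinear and requires the complex parallel transport machinery developed in the paper. Secondary technical points are the Gaussian beam construction along non-tangential geodesics without assuming simplicity of $M_0$, and careful tracking of the conformal factor through the effective potential.
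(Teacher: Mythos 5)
There is a genuine gap at the final step, and it is precisely the step the whole paper is organised around. You propose to ``invoke the inversion theorem for the complex parallel transport problem --- the main technical input of the paper''. No such theorem exists here: for $r>1$ the paper explicitly states that injectivity of the complex parallel transport problem (and of the complex ray transform) is \emph{completely unknown} and is left open (see the discussion in \S 1.2 and Section \ref{sec:complex-ray-transform}; Proposition \ref{prop:injectivity} handles only $r=1$, and Proposition \ref{prop:counterexample} even shows non-injectivity when $\dim M=1$). The actual proof does not invert a single complex parallel transport. Instead it exploits the fact that $\mathbb{R}^2\times M_0$ is conformally transversally anisotropic in \emph{every} Euclidean direction $\mu\in\mathbb{S}^1$: for each $\mu$ one runs the CGO construction with phase $(x_1,x_2)\cdot\mu + ir$, where the transversal manifold $\mu^\perp\mathbb{R}\times M_0$ retains one Euclidean direction. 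The leafwise solutions are then assembled (Theorem \ref{thm:reduction-to-complex-x-ray-connection}) into a single invertible $G$ on the $2$-frame bundle $L$ solving the complexified transport equation \eqref{eq:homomorphism-connection-complex-transport}, and the frame dependence is killed by Eskin's trick: one plugs in the holomorphic families of $2$-frames $\xi^\pm(t)$, $t\in\mathbb{C}^\times$, shows $\partial_{\bar t}G\equiv 0$, and concludes by the removable singularity and Liouville theorems that $G$ is constant in $t$ and ultimately independent of the frame (Lemma \ref{lemma:connection-uniqueness}). Your fixed phase $\Phi=x_1+ix_2$ consumes both Euclidean directions at once and therefore cannot produce the $\mathbb{S}^1$-family of data on which this Liouville argument depends.

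A secondary but related error: the principal transport equations for the amplitudes are \emph{not} matrix ODEs along $\gamma$ with the ordinary parallel transports as fundamental solutions. They are $\overline{\partial}$-equations on the two-dimensional bicharacteristic leaves $\mathbb{R}\mu\times\gamma$ (equations \eqref{eq:transport-1-simple}--\eqref{eq:transport-2-simple}, solved via Nakamura--Uhlmann), and the limiting identity is an \emph{area} integral $\int_{B(\nu)}\Tr\bigl(C_1C_2^*(\widetilde{A}_1+i\widetilde{A}_r)\bigr)\,dx_\mu\,dr=0$ over the leaf, converted by Stokes' theorem and Plemelj--Sokhotski into a statement about boundary values of $C_2^*C_1$, rather than the one-dimensional integral $\int_0^L C_2^{-1}(\cdot)C_1\,dt$ you write. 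This distinction is not cosmetic: it is exactly what makes the data ``complex'' rather than the classical non-abelian ray transform, and it is what allows the holomorphic-family argument to close the proof. The opening steps of your proposal (boundary determination, extension, the Alessandrini identity, handling $c$ via the conformal scaling of $\Lie_{A,Q}$, Gaussian beams for non-simple $M_0$) do match the paper.
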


The case $r = 1$ of the above theorem is also a consequence of \cite[Theorem 1.3]{salo-17} and \cite[Theorem 1.5]{cekic-17}. The former proves injectivity of the $X$-ray transform on product manifolds and although it is stated only for functions, its proof should carry over to the case of $1$-forms. The novelty of our theorem is for $r \geq 2$, where we give the \emph{most general geometric assumptions} available in the literature. 

Let us explain a bit more the context. Firstly, Eskin \cite{eskin-01} proved uniqueness for Euclidean geometries, i.e. when $(M, g)$ is isometrically contained in a Euclidean space, and our result may be seen as a generalisation to the \emph{anisotropic setting}. Moreover, in author's previous work \cite{cekic-17}, the more general geometry $(\mathbb{R} \times M_0, c(e \oplus g_0))$ was considered and the Calder\'on problem (for unitary connections and zero potential) was reduced to proving uniqueness for a certain complex parallel transport problem, see \cite[Question 1.6]{cekic-17}. This uniqueness is still not known, and an ancillary objective of this article is to shed some light on that question, see \S \ref{ssection:complex-parallel-transport} below for more details. In this context, allowing the extra Euclidean direction in $(\mathbb{R}^2 \times M_0, c(e \oplus g_0))$ is natural, and another novelty is that \emph{we do not require any additional assumption} on the $(M_0, g_0)$ factor. Indeed, the typical assumption present in the literature for the Calder\'on problem is that of injectivity of the $X$-ray transform on functions and $1$-forms on this factor, see \cite{dos-santos-ferreira-keng-salo-uhlmann-09, cekic-17, krupchyk-uhlmann-18}. Finally, our result allows to consider non-unitary connections and non-skew-Hermitian potentials simultaneously, as opposed to e.g. \cite{cekic-17, st-amant-24}. A more detailed overview of relevant literature is given below in \S \ref{ssec:previous-work}.

\subsection{Complex ray transform and complex parallel transport.}\label{ssection:complex-parallel-transport} The secondary objective of this paper is to introduce and study basic properties of a new type of integral transform, called the \emph{complex ray transform}, and its \emph{non-linear} analogue for connections, the \emph{complex parallel transport} (see Section \ref{sec:complex-ray-transform}). 

Roughly speaking, given a Riemannian manifold with boundary $(M, g)$, and a function (or a tensor) $f$ with compact support in the interior of $\mathbb{R} \times M$, for a given point $x \in \partial M$ and a non-trapped inward pointing $v \in T_xM$ generating a geodesic $\gamma$ in $M$, the complex ray transform is defined as the boundary value of the solution to the Del Bar equation along $\mathbb{R} \times \gamma$ with $f$ as a source term, see Figure \ref{fig:complex-ray}. This object can be twisted with a connection on the vector bundle $E = M \times \mathbb{C}^r$, see Definition \ref{def:complex-ray} below for more details. The complex parallel transport is defined similarly by taking an invertible matrix solution of a homogeneous Del Bar equation along $\mathbb{R} \times \gamma$ and restricting it to a suitable boundary curve in $\mathbb{R} \times \gamma$ as before, see Definition \ref{def:complex-parallel-transport-0} below. (However, there are several competing possible definitions for the notion of complex parallel transport, see Remark \ref{remark}.) The real counterparts of these complex objects, the (attenuated) $X$-ray transform and the parallel transport problem (or holonomy inverse problem) are very well-studied in literature, see e.g. \cite{Paternain-Salo-Uhlmann-12, Guillarmou-Paternain-Salo-Uhlmann-16, Cekic-Lefeuvre-21-1} and references therein.

\begin{center}
\begin{figure}[htbp!]
\includegraphics[scale=0.75]{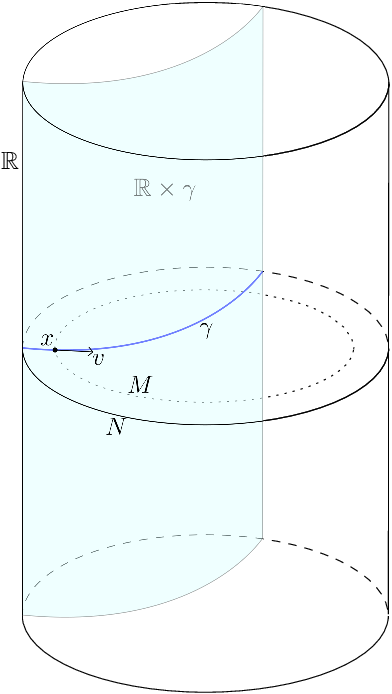}
\caption{\label{fig:complex-ray} In shaded blue is the bicharacteristic ray $\mathbb{R} \times \gamma$ generated by the geodesic $\gamma$ starting at $x \in \partial M$ and with speed $v$; $N$ is the extension of $M$ constructed in \S \ref{ssec:extensions}.}
\end{figure}
\end{center} 

Except for intrinsic interest, the complex parallel transport appears naturally when studying the Calder\'on problem in the \emph{conformally transversally anisotropic (CTA)} geometry $(\mathbb{R} \times M, c(e \oplus g))$, where $c > 0$ (see \cite[Question 1.6]{cekic-17}). Our aim here is to put these complex objects inside a rigorous mathematical framework, prove some basic properties, and provide some perspectives. Moreover, after constructing suitable CGO solutions, Theorem \ref{thm:main-theorem-gaussian-beams} above can be thought of as inverting a suitable complex parallel transport problem with \emph{additional data} coming from the fact that $\mathbb{R}^2 \times M_0$ has a CTA structure in any of the $\mathbb{R}^2$ directions. 

The main results of Section \ref{sec:complex-ray-transform} are that, essentially, \emph{when $r = 1$} both the linear and the non-linear questions reduce to their real counterparts on the transversal manifold $(M, g)$, see Propositions \ref{prop:injectivity} and \ref{prop:dimension-two}. The arguments are similar to the ones in \cite{dos-santos-ferreira-keng-salo-uhlmann-09, cekic-17} used in the context of the Calder\'on problem. \emph{For $r > 1$} the injectivity of the complex parallel transport problem and complex ray transform is \emph{completely unknown} and is left as a subject of future research. However, distinctively to the real case, the complex parallel transport problem is not injective for $\dim M = 1$, see Proposition \ref{prop:counterexample}.

Let us also mention that the complex ray transform bears formal similarities with the \emph{light ray transform}, which in the setting of Lorentzian geometry asks whether a function can be recovered from its integrals along lightlike geodesics (see \cite{lassas-oksanen-stefanov-uhlmann-20}). This similarity goes further by noting that the light ray transform naturally appears in the Lorentzian Calder\'on problem (see for instance \cite{feizmohammadi-ilmavirta-kian-oksanen-21}); on the level of PDEs this is explained because for the hyperbolic and elliptic problems the suitable CGO solutions concentrate on light rays and bicharacteristic planes (products $\mathbb{R} \times \gamma$ as above), respectively. %It is an interesting question to apply the argument in this paper to a suitable Lorentzian setting.

%\cite{st-amant-24}
\subsection{Previous work on Calder\'on type problems for systems.}\label{ssec:previous-work} In the CTA setting, as mentioned above, we reduced the problem (when $(M_0, g_0)$ is \emph{simple}, the connections are unitary, and the potential is zero) to what we call below the complex parallel transport problem, see \cite[Question 1.6]{cekic-17}. When $r = 1$ we uniquely determined the connection assuming only partial data and that $(M_0, g_0)$ has injective $X$-ray transform; see also \cite[Theorem 4]{dos-santos-ferreira-keng-salo-uhlmann-09}. Eskin \cite{eskin-01} proved uniqueness for \emph{any} $r \geq 1$ in the Euclidean setting, i.e.\ $(M, g) \Subset (\mathbb{R}^n, e)$. Using different methods, uniqueness in the case of \emph{Yang-Mills} connections and arbitrary geometry was shown in \cite{cekic-20}. In the analytic case, uniqueness was obtained by Gabdurakhmanov-Kokarev \cite{gabdurakhmanov-kokarev-21}. Last but not the least, Albin-Guillarmou-Tzou-Uhlmann \cite{albin-guillarmou-tzou-uhlmann-13} solve the problem fully if $\dim M = 2$ and the connection is unitary. St-Amant \cite{st-amant-24} recently showed uniqueness (in the case of zero potential and unitary connections) for a large but fixed frequency under an assumption of injectivity for the parallel transport problem.

The case $r = 1$ (\emph{magnetic Laplacian}) was thoroughly studied in the literature and we mention only a few results in this direction: in the CTA setting Krupchyk-Uhlmann \cite{krupchyk-uhlmann-18} showed uniqueness assuming the connection and the potential are only $L^\infty$ regular; partial data problem with rough coefficients was recently studied by Selim-Yan \cite{selim-yan-23}; uniqueness under essentially minimal regularity was obtained in the Euclidean setting by Haberman \cite{haberman-18}.

The setting of Yang-Mills-Dirac equations in the analytic category was considered by Valero \cite{valero-22}. The related problem for the dynamic Schr\"odinger equation was tackled by Tetlow \cite{tetlow-22}. In the hyperbolic setting, the recovery of time-independent connection and potential was carried out by Kurylev-Oksanen-Paternain \cite{kurylev-oksanen-paternain-18}. For non-local equations, zeroth and first order terms were uniquely recovered (there is \emph{no} gauge invariance) by the author with Lin and R\"uland \cite{cekic-lin-rueland-20}. In the context of non-linear equations, a connection was uniquely determined in Minkowski space up to gauge equivalence from the source-to-solution map of the twisted cubic wave equation by Chen-Lassas-Oksanen-Paternain \cite{chen-lassas-oksanen-paternain-22}.

\subsection{Proof ideas.} The idea is to use the extra freedom (compared to the CTA setting) in choosing the Euclidean direction in $\mathbb{R}^2$ and to construct suitable CGO solutions probing the manifold. A \emph{bicharacteristic leaf} is a totally geodesic surface of the form $\mathbb{R}\mu \times \gamma \subset (\mathbb{R}^2 \times M_0, e \oplus g_0)$, where $0 \neq \mu \in \mathbb{R}^2$ and $\gamma$ is orthogonal to $\mu$. We parameterise bicharacteristic leaves by the \emph{$2$-frame bundle} $L$, consisting of non-zero pairs of vectors $(\mu, \nu)$ with same norm, where $\mu \in \mathbb{R}^2$ and $\nu \perp \mu$. The leaves foliate $\mathbb{R}^2 \times M_0$. This construction is introduced in detail and the general construction of CGO solutions based on solving $\overline{\partial}$-equations (along the leaves) is recalled in \S \ref{ssec:LCW}, \S \ref{ssec:CGO}, and start of \S \ref{ssec:CGO-del-bar}.%; in particular, recalling how to solve matrix $\overline{\partial}$-equations (along the leaves) which plays an important role for the CGOs.

For one fixed bicharacteristic leaf, a family of CGOs depending on a semiclassical parameter $h > 0$ concentrating on that leaf as $h \to 0$ was constructed in \cite{cekic-17}. The novelty in this paper is to smoothly glue the solutions to the leafwise $\overline{\partial}$-equation together and to reduce the theorem to a transport (tomography) problem on $L$. Typically, in previous works there was little need to smoothly parameterise the solutions to the transport equation, as for $r = 1$ it is possible to directly obtain a reduction to the $X$-ray transform, and the construction of invertible solutions to the $\overline{\partial}$-equation is trivial since we may take as ansatz the exponential of a solution to an inhomogeneous $\overline{\partial}$-equation (solved by applying Cauchy's operator). 

The reduction to a transport problem on $L$ is done in two steps: firstly, in Lemma \ref{lemma:parametric-del-bar} we construct invertible solutions to the matrix $\overline{\partial}$-equation along the bicharacteristic leaves parametrised locally from the boundary (we use a result of Nakamura-Uhlmann \cite{nakamura-uhlmann-02} in the process). Secondly, in Theorem \ref{thm:reduction-to-complex-x-ray-connection} we use these local solutions to the leafwise $\overline{\partial}$-equation to construct CGOs. Plugging the CGOs into the integral (Alessandrini) identity and taking the $h \to 0$ limit (Step 1), shows that we can modify these leafwise solutions so that by uniqueness they glue together nicely on the boundary (Step 2), and that they extend to the interior since any leaf intersects the boundary (Steps 3 and 4). 

Thus we obtain a smooth invertible matrix function $G$ on $L$ satisfying the complex transport problem
\begin{equation}\label{eq:transport-intro}
    \mathbb{X}(\mu + i\nu) G(\mu + i\nu) + A_1(x_1, x_2, x)(\mu + i\nu) G(\mu + i\nu) - G(\mu + i\nu) A_2(x_1, x_2, x)(\mu + i\nu) = 0,
\end{equation}
with $G \equiv \id$ in the exterior of $M$, where $\mathbb{X}$ is a \emph{complexified} geodesic vector field (i.e. a complex differential operator on $L$ defined by differentiating along the parallel transport of $(\mu, \nu)$ along geodesics in directions $\mu$ and $\nu$), and $\mu + i\nu$ represents the $2$-frame $(\mu, \nu) \in L$. Inspired by Eskin \cite{eskin-01} we consider sufficiently large holomorphic families of $2$-frames depending on $t\in \mathbb{C}\setminus \{0\}$ and differentiate \eqref{eq:transport-intro} using the complex derivative $\partial_{\bar{t}}$: we get $\partial_{\bar{t}} G \equiv 0$ and by Liouville's theorem that $G$ is constant in these families and eventually independent of the $2$-frame variable (see Lemma \ref{lemma:connection-uniqueness}). This shows $G^*A_1 = A_2$, and the case of potentials is dealt with similarly.

\subsection{Further remarks and perspectives.} It is natural to ask whether the proof of Theorem \ref{thm:main-theorem-gaussian-beams} can work with an arbitrary Riemannian metric on $\mathbb{R}^2$ (which by the existence of isothermal coordinates is always conformal to $e$). However, obstructions for the existence of \emph{limiting Carleman weights (LCW)} (see \S \ref{ssec:LCW} below) and so for suitable CGO solutions on products of surfaces obtained in \cite{angulo-ardoy-faraco-guijarro-ruiz-17} say that the metric on $\mathbb{R}^2$ has to be given by a surface of revolution in which case there is essentially only \emph{one} LCW and we cannot apply our proof.

As mentioned above, we believe that the arguments of this paper could be applied to the setting of the parallel transport problem in the Lorentzian setting \cite{feizmohammadi-ilmavirta-kian-oksanen-21}, if the spacial part has enough Euclidean directions. For the complex parallel transport problem it would be interesting to study the injectivity in the simplest non-trivial case: when $M$ is a planar disk, and $B = 0$ (see Definition \ref{def:complex-parallel-transport}). Next, it could be worthwhile to compare the situation when $M$ is a simple surface with the recent results of Bohr-Paternain on the transport twistor space \cite{Bohr-Paternain-23} and possibly re-interpret the injectivity of the complex parallel transport problem. Understanding the boundary value problems for the (twisted) $\overline{\partial}$-equation might shed some light on complex parallel transport problem; for this, it could be useful to consult the work of Kenig \cite{Kenig-23} (and the Wiener-Masani theorem). Finally, the statements in this paper are made for $C^\infty$-regularity; it could be interesting to lower the regularity to $L^\infty$ or to some $C^k$ where $k$ is explicit (see \cite{krupchyk-uhlmann-18}). This is left for future investigation.

\subsection{Organisation of the paper.} In Section \ref{sec:preliminaries}, we lay out some preliminaries: in \S \ref{ssec:connection-laplacian} about basic properties of $\Lie_{A, Q}$ and symmetries of $\Lambda_{A, Q}$, and of the integral (Alessandrini) identity, in \S \ref{ssec:boundary-determination} about boundary determination, that is, the determination of the full jet of $A$ and $Q$ on $\partial M$ from $\Lambda_{A, Q}$, in \S \ref{ssec:del-bar} we state a solvability result for the Del Bar equation, in \S \ref{ssec:extensions} we discuss extensions of Riemannian manifolds, and finally in \S \ref{ssec:topological-reduction} certain topological arguments about embeddings in $\mathbb{R}^2 \times M_0$. In Section \ref{sec:CGO-del-bar} we state Theorem \ref{thm:main-theorem-simple} which is the same as Theorem \ref{thm:main-theorem-gaussian-beams} with the additional assumption that $(M_0, g_0)$ is \emph{simple} (this makes the construction of CGOs more explicit and we recommend the reader to first go through the proof of this theorem). Here we also introduce the new basic objects, the bundle of $2$-frames and the complexified geodesic vector field, and then go on to reduce the question of injectivity to certain (complex) transport problems, see Theorems \ref{thm:reduction-to-complex-x-ray-connection} and \ref{thm:reduction-to-complex-x-ray-potential}. In Section \ref{sec:simple-transversal-geometry} we show injectivity for the obtained transport problems, again in the setting of simple transversal geometry and show Theorem \ref{thm:main-theorem-simple}. In Section \ref{sec:gaussian-beams} we show Theorem \ref{thm:main-theorem-gaussian-beams}; we explain the main differences with the proof of Theorem \ref{thm:main-theorem-simple} and in particular, we use the CGO construction via Gaussian Beams obtained in \cite{cekic-17}. Finally, in Section \ref{sec:complex-ray-transform} we introduce the \emph{complex ray transform} and the \emph{complex parallel transport} problem, and study their basic injectivity properties. 
\medskip

{\bf Acknowledgements.} During the course of writing this project I received funding from an Ambizione grant (project number 201806) from the Swiss National Science Foundation and was supported by the Max Planck Institute in Bonn. I am grateful to Katya Krupchyk, François Monard, Lauri Oksanen, and Mikko Salo for helpful discussions. I would also like to warmly thank two anonymous referees for their valuable comments and questions which improved the article.

\section{Preliminaries}\label{sec:preliminaries}

In this section we spell out a few preliminary technical results. We will use the standing assumptions that $(M, g)$ is a smooth compact Riemannian manifold with boundary with $n := \dim M$, $r \in \mathbb{Z}_{\geq 1}$, $A, A_1, A_2$ are connections on $E = M \times \mathbb{C}^r$, and $Q, Q_1, Q_2 \in C^\infty(M, \mathbb{C}^{r \times r})$ are matrix valued potentials. Unless otherwise stated, we assume that $0$ is not a Dirichlet eigenvalue of $\Lie_{A_i, Q_i}$ for $i = 1, 2$ or $\Lie_{A, Q}$ (where these operators are defined in \S \ref{ssec:connection-laplacian} below). We will use the summation convention.

\subsection{Connection Laplacian}\label{ssec:connection-laplacian} Here we derive elementary facts about the connection Laplacian. The following statement is certainly well-known but we could not locate a precise reference. Write $|g| = \det g$ for the determinant of the Riemannian metric in local coordinates.

\begin{proposition}\label{prop:laplace-local-coord}
    In local coordinates $(x_i)_{i = 1}^n$, where $A = A_i dx_i$ we may write for any local section $u$ of $M \times \mathbb{C}^r$
    \begin{align*}
        \Lie_{A}u = d_{-A^*}^* d_A u &= -|g|^{-\frac{1}{2}} (\partial_{x_i} + A_i) (|g|^{\frac{1}{2}} g^{ij} (\partial_{x_j} + A_j))u\\
        &= -\Delta_g u  - 2g^{ij} A_i \partial_{x_j}u + (d^*A)u - g^{ij} A_i (Au)_j. 
    \end{align*}
\end{proposition}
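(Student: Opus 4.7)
The proof is a routine computation in local coordinates once one carefully unpacks the definitions, so my plan is to just carry out the calculation step by step. First, I would write out the two basic operators in coordinates: $d_A u = (\partial_{x_j} u + A_j u)\, dx^j$ and $d_{-A^*} v = (\partial_{x_i} v - A_i^* v)\, dx^i$, where $A = A_j\, dx^j$ and $A^* = A_i^*\, dx^i$ (Hermitian adjoint taken componentwise). The natural $L^2$-pairing on sections uses the Riemannian volume $\sqrt{|g|}\, dx$, and on 1-form-valued sections uses $\langle \omega, \eta \rangle = g^{ij} \omega_i^* \eta_j$ with $\langle v, w \rangle_{\mathbb{C}^r} = v^* w$.

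Next, I would compute the formal adjoint $(d_{-A^*})^*$. Pairing with a 1-form-valued section $\omega$, I get
\[
\langle d_{-A^*} v, \omega \rangle_{L^2} = \int_M g^{ij} (\partial_{x_i} v - A_i^* v)^* \omega_j \sqrt{|g|}\, dx = \int_M g^{ij}\bigl[(\partial_{x_i} v^*)\omega_j - v^* A_i \omega_j\bigr] \sqrt{|g|}\, dx,
\]
using the identity $(A_i^* v)^* = v^* A_i$. Integrating by parts the first term and rewriting in the form $\int_M v^* (\cdots) \sqrt{|g|}\, dx$ yields
\[
(d_{-A^*})^* \omega = -|g|^{-1/2} \partial_{x_i}\bigl(|g|^{1/2} g^{ij} \omega_j\bigr) - g^{ij} A_i \omega_j.
\]
Substituting $\omega_j = (\partial_{x_j} + A_j)u$ and factoring $-|g|^{-1/2}$ out of both pieces immediately reassembles the expression into
\[
\Lie_A u = -|g|^{-1/2} (\partial_{x_i} + A_i)\bigl(|g|^{1/2} g^{ij} (\partial_{x_j} + A_j)\bigr) u,
\]
proving the first identity.

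Finally, to obtain the second expression I would expand the product using the Leibniz rule. The ``pure derivative'' piece $-|g|^{-1/2} \partial_{x_i}(|g|^{1/2} g^{ij} \partial_{x_j} u)$ is just $-\Delta_g u$; the cross piece $-|g|^{-1/2} \partial_{x_i}(|g|^{1/2} g^{ij} A_j u)$ splits as $-\bigl[|g|^{-1/2} \partial_{x_i}(|g|^{1/2} g^{ij} A_j)\bigr] u - g^{ij} A_j \partial_{x_i} u$, which is $(d^*A) u - g^{ij} A_i \partial_{x_j} u$ using the standard formula $d^*A = -|g|^{-1/2} \partial_{x_i}(|g|^{1/2} g^{ij} A_j)$ applied componentwise and the symmetry of $g^{ij}$; the remaining piece $-g^{ij} A_i (\partial_{x_j} u + A_j u)$ contributes another $-g^{ij} A_i \partial_{x_j} u$ and the zeroth order term $-g^{ij} A_i (Au)_j$. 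Summing these gives the second displayed identity.

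There is no real obstacle here: the only thing to watch is sign and conjugation conventions (in particular $(A_i^* v)^* = v^* A_i$ and the sign in $d^* \omega = -|g|^{-1/2}\partial_{x_i}(|g|^{1/2}g^{ij}\omega_j)$), plus the symmetry of $g^{ij}$ used to combine two identical cross terms into the factor of $2$ in the final formula.
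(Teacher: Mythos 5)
Your proposal is correct and follows essentially the same route as the paper: write $d_A$ in coordinates, obtain the formal adjoint by integration by parts against the weighted $L^2$ pairing, and expand via Leibniz to get the second identity. The only cosmetic difference is that you compute $(d_{-A^*})^*$ directly, whereas the paper computes $d_A^*$ for general $A$ and then substitutes $A \mapsto -A^*$; these are trivially equivalent.
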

\begin{proof}
    Let $(\e_i)_{i = 1}^r$ be the standard basis of $\mathbb{C}^r$; write $A_i \e_j = A_{i, kj} \e_k$, where $A_{i, kj}$ is the $kj$-th entry of $A_i$ and consider a section $u = u_i \e_i$. Then we have
    \begin{align}\label{eq:d_A}
        d_A u = du_i \otimes \e_i + u_i dx_j \otimes A_j \e_i = dx_j \otimes (\partial_{x_j} + A_j) u.
    \end{align}
    Next, we compute in the coordinate patch $U$, given smooth functions $f_{\ell k}$ for $\ell = 1, \dotsc, n$, $k = 1, \dotsc, r$, and $v := f_{\ell k} dx_{\ell} \otimes \e_k$
    \begin{align*}
        \int_U \langle{d_A u, v}\rangle\, d\vol_g &= \int_U \langle{dx_j \otimes (\partial_{x_j} + A_j) u, f_{\ell k} dx_{\ell} \otimes \e_k}\rangle\, d\vol_g\\
        &= \int_U g^{j \ell} \bar{f}_{\ell k} (\partial_{x_j} u_k + \langle{A_j \e_i, \e_k}\rangle u_i) |g|^{\frac{1}{2}}\, dx\\
        &= -\int_U |g|^{-\frac{1}{2}} \partial_{x_j}(\bar{f}_{\ell k} g^{j\ell} |g|^{\frac{1}{2}}) u_k\, d\vol_g + \int_U g^{j\ell} A_{j, ki} \bar{f}_{\ell k} u_i\, d\vol_g\\
        &= - \int_U \langle{u, \e_k \otimes |g|^{-\frac{1}{2}} \partial_{x_j} (f_{\ell k} g^{j\ell} |g|^{\frac{1}{2}}) - \bar{A}_{j, ki} \e_i g^{j \ell} f_{\ell k}}\rangle\, d\vol_g\\
        &= - \int_U \langle{u, |g|^{-\frac{1}{2}} (\partial_{x_j} - A_j^*)(|g|^{\frac{1}{2}} g^{j \ell} f_{\ell k}) \e_k}\rangle\, d\vol_g,
    \end{align*}
    where we use \eqref{eq:d_A} in the first equality, $d\vol_g = |g|^{\frac{1}{2}}\,dx$ in the second one, integration by parts in the third equality, and we denoted by $A_j^*$ the adjoint of a matrix. Therefore the formal adjoint of $d_A$ is given by
    \begin{equation}\label{eq:d_A^*}
        d_A^*v = -|g|^{-\frac{1}{2}} (\partial_{x_j} - A_j^*)(|g|^{\frac{1}{2}} g^{j \ell} f_{\ell k}) \e_k = -|g|^{-\frac{1}{2}} (\partial_{x_j} - A_j^*)(|g|^{\frac{1}{2}} \langle{dx_j, v}\rangle). 
    \end{equation}
    Combining \eqref{eq:d_A} and \eqref{eq:d_A^*} immediately proves the first formula.    

    For the second one, we observe
    \begin{equation}\label{eq:d_A^*-alternative}
        d_A^*v = -|g|^{-\frac{1}{2}} \partial_{x_j}(|g|^{\frac{1}{2}} g^{j \ell} f_{\ell k}) \otimes \e_k + A_j^* g^{j \ell} f_{\ell} = d^*v + A_j^*g^{j\ell} v_\ell,
    \end{equation}
    where in the first equality we used \eqref{eq:d_A^*} and wrote $v_{\ell} := f_{\ell k} \e_k$ for the $dx_\ell$ component of $v$, and in the last equality we recognised the formula for the codifferential $d^*$. Combining \eqref{eq:d_A} and \eqref{eq:d_A^*} shows the second formula and completes the proof.
\end{proof}

In particular we deduce the gauge invariance as follows.

\begin{proposition}\label{prop:DN-invariance}
    Let $F \in C^\infty(M, \GL_r(\mathbb{C}))$. Then we have
    \[
        F^{-1} d_{-A^*} F = d_{-F^*A}^*, \quad F^{-1} \Lie_{A, Q} F = \Lie_{F^*A, F^*Q}.
    \]
    Moreover, if $F|_{\partial M} = \id$, then $\Lambda_{F^*A, F^*Q} = \Lambda_{A, Q}$.
\end{proposition}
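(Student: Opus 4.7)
The plan is to first establish the direct gauge identity $F^{-1} d_A F = d_{F^*A}$: for any section $u$, the Leibniz rule gives
\[
F^{-1} d_A (Fu) = F^{-1}\bigl((dF)u + F\,du + AFu\bigr) = du + F^{-1}dF \cdot u + F^{-1}AF \cdot u = d_{F^*A} u.
\]
For the first displayed identity in the proposition, noting that as written the two sides have mismatched domain and codomain and interpreting it as the intended $F^{-1} d_{-A^*}^* F = d_{-(F^*A)^*}^*$, I would apply the just-proven identity to the connection $-A^*$ with gauge $F^{-*}$, obtaining $F^* d_{-A^*} F^{-*} = d_{(F^{-*})^*(-A^*)}$. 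A short computation using $F^* dF^{-*} = -dF^* \cdot F^{-*}$ (which follows from differentiating $F^* F^{-*} = \id$) identifies the resulting connection as $-(F^*A)^*$. Taking formal $L^2$-adjoints via $(PLQ)^* = Q^* L^* P^*$ then yields the desired $F^{-1} d_{-A^*}^* F = d_{-(F^*A)^*}^*$.

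These two identities combine to give the Laplacian gauge invariance by composition:
\begin{align*}
F^{-1}\Lie_{A,Q} F &= F^{-1} d_{-A^*}^* d_A F + F^{-1}QF \\
&= (F^{-1} d_{-A^*}^* F)(F^{-1} d_A F) + F^*Q \\
&= d_{-(F^*A)^*}^* d_{F^*A} + F^*Q = \Lie_{F^*A, F^*Q}.
\end{align*}
Alternatively, one can bypass the adjoint identity and verify the Laplacian gauge invariance directly from the local coordinate expression of Proposition \ref{prop:laplace-local-coord}, using the intertwining relations $(\partial_j + A_j)F = F(\partial_j + (F^*A)_j)$ and $F^{-1}(\partial_i + A_i) = (\partial_i + (F^*A)_i)F^{-1}$, after which all factors of $F$ and $F^{-1}$ slide past the scalar $|g|^{1/2} g^{ij}$ in the middle and cancel.

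For the DN invariance, assume $F|_{\partial M} = \id$, take $f \in C^\infty(\partial M, \C^r)$, and let $u$ solve $\Lie_{A,Q} u = 0$ with $u|_{\partial M} = f$. Setting $\tilde u := F^{-1} u$, the gauge invariance gives $\Lie_{F^*A, F^*Q} \tilde u = F^{-1} \Lie_{A,Q} u = 0$ and $\tilde u|_{\partial M} = F^{-1}|_{\partial M}\, f = f$, so $\tilde u$ is the Dirichlet solution for the gauge-transformed operator. The first identity then yields $d_{F^*A} \tilde u = F^{-1} d_A F \cdot F^{-1} u = F^{-1} d_A u$ throughout $M$, and restricting to $\partial M$ and contracting with $\nu$ gives
\[
\Lambda_{F^*A, F^*Q} f = \iota_\nu d_{F^*A} \tilde u|_{\partial M} = F^{-1}|_{\partial M} \iota_\nu d_A u|_{\partial M} = \iota_\nu d_A u|_{\partial M} = \Lambda_{A,Q} f.
\]
The only delicate point is the bookkeeping in the first step: one must distinguish the pointwise Hermitian conjugate on $\mathbb{C}^{r\times r}$-valued $1$-forms (so that $(F^{-1}dF)^* = -F^* dF^{-*}$, not a naive transposed analogue) from the formal $L^2$ adjoint of differential operators. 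With these conventions fixed, the rest is routine composition and boundary restriction.
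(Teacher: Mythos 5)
Your proof is correct, and you rightly diagnosed the typo in the statement (the left-hand side should carry a formal adjoint, and the resulting connection is $-(F^*A)^*=(F^{-*})^*(-A^*)$, which the paper abbreviates as $-F^*A$). The one step where you genuinely diverge from the paper is the adjoint intertwining identity: the paper verifies $F^{-1} d_{-A^*}^* F = d_{-(F^*A)^*}^*$ by a direct computation in local coordinates, substituting $F^{-1}(\partial_{x_j}+A_j)F = \partial_{x_j}+(F^*A)_j$ into the explicit formula \eqref{eq:d_A^*} for the codifferential, whereas you obtain it abstractly by applying the covariant identity $F^{-1}d_AF=d_{F^*A}$ to the dual connection $-A^*$ with the contragredient gauge $F^{-*}$ and then passing to formal $L^2$ adjoints via $(PLQ)^*=Q^*L^*P^*$. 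Your route is coordinate-free and makes transparent why the dual connection $-A^*$ transforms by $F^{-*}$, at the cost of the bookkeeping you flag (pointwise Hermitian conjugation of matrix-valued forms versus formal $L^2$ adjoints, and the fact that the metric contraction $g^{j\ell}$ commutes with the fibrewise multiplication operators); the paper's route avoids that bookkeeping entirely because the sign and conjugation conventions are already baked into \eqref{eq:d_A^*}. Your alternative local-coordinate verification is essentially the paper's argument, and your treatment of the DN maps (conjugating the Dirichlet solution by $F^{-1}$ and restricting to $\partial M$ where $F=\id$) spells out what the paper leaves implicit in ``follows from the second formula.''
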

\begin{proof}
    By definition of pullback of a connection we have 
    \begin{equation}\label{eq:gauge-covariant-formula}
        F^{-1} d_A F = d_{F^*A}
    \end{equation}
    we will use local coordinates notation as in Proposition \ref{prop:laplace-local-coord}. Write $F \e_i = F_{ji} \e_j$ for the components of $F$. Moreover, given $v = f_{\ell k} dx_{\ell} \otimes \e_k$ we compute
    \begin{equation}\label{eq:gauge-covariant-adjoint-formula}
        F^{-1} d_{-A^*}^* F v = -|g|^{-\frac{1}{2}} F^{-1}(\partial_{x_j} + A_j)F (|g|^{\frac{1}{2}} g^{j \ell} f_{\ell k})\e_k = d_{-F^*A}^* v,
    \end{equation}
    where in the first equality we used \eqref{eq:d_A^*} and in the last equality we used $F^{-1} d_A F = d_{F^*A}$ and \eqref{eq:d_A^*} again. Therefore
    \[
        F^{-1}\Lie_{A, Q} F = F^{-1} d_{-A^*}^* F F^{-1} d_A F + F^*Q = \Lie_{F^*A, F^*Q},
    \]
    where in the second equality we used \eqref{eq:gauge-covariant-formula} and \eqref{eq:gauge-covariant-adjoint-formula}. The equality of the DN maps follows from the second formula.
\end{proof}

Next we will derive a twisted version of Green's identity.

\begin{proposition}\label{prop:green}
    For all $u \in C^\infty(M, T^*M \otimes \mathbb{C}^r)$ and $v \in C^\infty(M, \mathbb{C}^r)$, we have
    \[
        \langle{d_A^*u, v}\rangle_{L^2(M, \mathbb{C}^r)} - \langle{u, d_A v}\rangle_{L^2(M, \mathbb{C}^r)} = -\langle{\iota_\nu u, v}\rangle_{L^2(\partial M, \mathbb{C}^r)}.
    \]
\end{proposition}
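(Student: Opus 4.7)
The plan is to reduce the twisted identity to the standard (scalar, untwisted) Green's identity by separating the zeroth order (algebraic) contribution of the connection $1$-form $A$, and showing that this contribution cancels in the left-hand side of the claim.

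First, I would write $d_A = d + A$, where the second summand denotes the zeroth order operator $C^\infty(M, \mathbb{C}^r) \to C^\infty(M, T^*M \otimes \mathbb{C}^r)$ given by pointwise multiplication with the matrix of $1$-forms $A$. Correspondingly, the formula \eqref{eq:d_A^*-alternative} from the proof of Proposition \ref{prop:laplace-local-coord} gives the decomposition
\[
    d_A^* u = d^* u + A^{\#} u, \qquad A^{\#} u := A_j^* g^{j\ell} u_\ell,
\]
where $u_\ell \in C^\infty(M, \mathbb{C}^r)$ denotes the $dx_\ell$-component of $u$, and $d^*$ is the usual codifferential acting componentwise on $\mathbb{C}^r$-valued $1$-forms. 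The operator $A^{\#}$ is purely algebraic.

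Next, I would verify the pointwise adjoint relation between $A$ and $A^{\#}$: for any $u \in T^*M \otimes \mathbb{C}^r$ and $v \in \mathbb{C}^r$ at a point,
\[
    \langle A^{\#} u, v\rangle_{\mathbb{C}^r} = g^{j\ell} \langle A_j^* u_\ell, v\rangle_{\mathbb{C}^r} = g^{j\ell} \langle u_\ell, A_j v\rangle_{\mathbb{C}^r} = \langle u, A v\rangle_{T^*M \otimes \mathbb{C}^r},
\]
using the definition of the pointwise Hermitian pairing on $T^*M \otimes \mathbb{C}^r$ and the fact that $A_j^*$ is the adjoint matrix of $A_j$. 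Integrating over $M$ gives $\langle A^{\#} u, v\rangle_{L^2} = \langle u, A v\rangle_{L^2}$, with no boundary contribution since $A^{\#}$ is of order zero. Subtracting, the algebraic parts cancel and we obtain
\[
    \langle d_A^* u, v\rangle_{L^2(M)} - \langle u, d_A v\rangle_{L^2(M)} = \langle d^* u, v\rangle_{L^2(M)} - \langle u, dv\rangle_{L^2(M)}.
\]

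Finally, I would invoke the standard scalar Green's identity componentwise. For any smooth complex-valued $1$-form $\omega$ and function $f$ on $M$,
\[
    \langle d^* \omega, f\rangle_{L^2(M)} - \langle \omega, df\rangle_{L^2(M)} = -\langle \iota_\nu \omega, f\rangle_{L^2(\partial M)},
\]
which follows from the divergence theorem applied to the vector field dual to $\bar{f}\,\omega$. Summing over the $r$ components of $\mathbb{C}^r$ with respect to the standard basis $(\e_i)$ yields the claimed identity. There is no genuine obstacle here; the only thing to be careful about is writing $d^*$ consistently on $\mathbb{C}^r$-valued forms and checking the pointwise cancellation in the algebraic step above, which is immediate from Proposition \ref{prop:laplace-local-coord}.
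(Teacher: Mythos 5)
Your proposal is correct and follows essentially the same route as the paper's proof: both split off the zeroth order part $A^{\#}u = A_j^* g^{j\ell} u_\ell$ via \eqref{eq:d_A^*-alternative}, verify pointwise that $\langle A^{\#}u, v\rangle = \langle u, Av\rangle$ so this contribution cancels, and then reduce to the standard (componentwise) Green's identity. The only cosmetic difference is that you sketch the divergence-theorem argument for the untwisted identity while the paper cites it from \cite{cekic-17}.
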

\begin{proof}
    In local coordinates, we compute 
    \begin{align*}
        \langle{d_A^*u, v}\rangle_{L^2} - \langle{u, d_A v}\rangle_{L^2} &= 
        \langle{d^* u, v}\rangle_{L^2} - \langle{u, d v}\rangle_{L^2} + \underbrace{\langle{A_j^* g^{j \ell} u_\ell, v}\rangle_{L^2} - \langle{u, dx_i \otimes A_i v}\rangle_{L^2}}_{T := },
    \end{align*}
    where in the second line we used \eqref{eq:d_A^*-alternative} and \eqref{eq:d_A}; $u_\ell$ denotes the $dx_{\ell}$ component of $u$. Then we have pointwise
    \[
        \langle{A_j^*g^{j \ell} u_\ell, v}\rangle = \langle{dx_j, dx_\ell}\rangle \langle{A_j^* u_\ell, v}\rangle = \langle{u, Av}\rangle,
    \]
    showing that $T \equiv 0$. The main identity is then a consequence of the usual Green's formula, see \cite[Equation (2.1)]{cekic-17}.
\end{proof}

We now compute the adjoint of the DN map.

\begin{proposition}\label{prop:DN-adjoint-formula}
    We have $\Lie_{A, Q} ^* = \Lie_{-A^*, Q^*}$ and $0$ is not a Dirichlet eigenvalue of $\Lie_{-A^*, Q^*}$. Moreover, for any $f, h \in C^\infty(\partial M, \mathbb{C}^{r \times r})$, we have
    \[
        \langle{\Lambda_{A, Q}f, h}\rangle_{L^2} = \langle{f, \Lambda_{-A^*, Q^*}h}\rangle_{L^2} = \langle{d_A u, d_{-A^*} w}\rangle_{L^2} + \langle{Qu, w}\rangle_{L^2},
    \]
    where $u$ and $w$ are unique solutions to
    \[
        \Lie_{A, Q} u = 0,\,\, u|_{\partial M} = f, \quad \Lie_{-A^*, Q^*} w = 0,\,\, w|_{\partial M} = h.
    \]
\end{proposition}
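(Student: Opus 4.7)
The plan has three parts matching the three assertions of the proposition.

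First, to show $\Lie_{A,Q}^* = \Lie_{-A^*, Q^*}$ as formal differential operators, I would simply compute: by definition, $\Lie_{A, Q} = d_{-A^*}^* d_A + Q$, so taking the formal adjoint on compactly supported sections gives $\Lie_{A, Q}^* = d_A^* d_{-A^*} + Q^*$. On the other hand, plugging the pair $(-A^*, Q^*)$ into the definition of $\Lie$ yields $\Lie_{-A^*, Q^*} = d_{-(-A^*)^*}^* d_{-A^*} + Q^* = d_A^* d_{-A^*} + Q^*$, which is the same operator.

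Second, to check that $0$ is not a Dirichlet eigenvalue of $\Lie_{-A^*, Q^*}$, I would invoke the Fredholm alternative for the Dirichlet problem for this (formally) second-order elliptic operator. Since Dirichlet boundary conditions are self-adjoint for second-order elliptic operators, the kernel of $\Lie_{-A^*, Q^*}$ with Dirichlet data equals the kernel of $(\Lie_{-A^*, Q^*})^* = \Lie_{A, Q}$ with the same boundary condition; the latter is trivial by our standing hypothesis, so the former is trivial as well. In particular the Dirichlet problem defining $w$ from $h$ is well-posed, and $\Lambda_{-A^*, Q^*}$ is defined.

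Third, for the integral identity I would apply Green's identity (Proposition \ref{prop:green}) with the connection $-A^*$ in place of $A$, to the form $d_A u$ and the section $w$:
\[
    \langle d_{-A^*}^* (d_A u), w\rangle_{L^2(M)} - \langle d_A u, d_{-A^*} w\rangle_{L^2(M)} = -\langle \iota_\nu d_A u, w\rangle_{L^2(\partial M)}.
\]
Since $\Lie_{A, Q} u = 0$, the first term equals $-\langle Q u, w\rangle_{L^2(M)}$, and the right-hand side equals $-\langle \Lambda_{A, Q} f, h\rangle_{L^2(\partial M)}$. Rearranging gives
\[
    \langle \Lambda_{A, Q} f, h\rangle_{L^2(\partial M)} = \langle d_A u, d_{-A^*} w\rangle_{L^2(M)} + \langle Q u, w\rangle_{L^2(M)}.
\]
For the other equality, I would apply the same argument after swapping the roles $(A, Q, u, f) \leftrightarrow (-A^*, Q^*, w, h)$: this uses Green's identity with the connection $A$ acting on $d_{-A^*} w$ and $u$, and gives $\langle \Lambda_{-A^*, Q^*} h, f\rangle_{L^2(\partial M)} = \langle d_{-A^*} w, d_A u\rangle_{L^2(M)} + \langle Q^* w, u\rangle_{L^2(M)}$. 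Taking complex conjugates then exhibits the same right-hand side as above, proving the chain of equalities.

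No serious obstacle is expected here: the only minor care needed is bookkeeping the adjoint convention for $A^*$ and ensuring the correct substitution $A \mapsto -A^*$ in Proposition \ref{prop:green} so that $d_{-A^*}^* d_A$ reproduces the principal part of $\Lie_{A, Q}$ on the first slot.
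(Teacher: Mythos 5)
Your proposal is correct and follows essentially the same route as the paper: the adjoint formula by inspection of the definition, well-posedness of the adjoint Dirichlet problem via the Fredholm/index-zero argument, and the integral identity by applying the twisted Green's identity (Proposition \ref{prop:green}) twice and conjugating. One small correction to your second step: the kernel of the Dirichlet realization of $\Lie_{-A^*,Q^*}$ is \emph{not} equal as a subspace to the kernel of $\Lie_{A,Q}$ (the operator is not self-adjoint, only the boundary condition is); what you actually need is $\ker \Lie_{-A^*,Q^*} = (\operatorname{ran}\Lie_{A,Q})^\perp$ together with the fact that the Dirichlet realization has index zero (the paper deforms to $-\Delta_g$), so that injectivity of $\Lie_{A,Q}$ forces surjectivity and hence triviality of the adjoint kernel. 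This is exactly the content of the Fredholm alternative you invoke, so the conclusion stands.
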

\begin{proof}
    The adjoint formula follows by definition of $\Lie_{A, Q}$. In particular, since $0$ is not the Dirichlet eigenvalue of $\Lie_{A, Q}$ by assumption, and since the index of $\Lie_{A, Q}$ is zero (there is a continuous deformation to $-\Delta_g$), we conclude that $0$ is not a Dirichlet eigenvalue of $\Lie_{-A^*, Q^*}$ either, and $\Lambda_{-A^*, Q^*}$ is well-defined. By Proposition \ref{prop:green} we have
    \[
        \langle{d_{-A^*} d_A u, v}\rangle_{L^2} - \langle{d_A u, d_{-A^*} v}\rangle_{L^2} = -\langle{\iota_\nu d_A u, v}\rangle,
    \]
    where $u$ is as in the statement and $v$ can for the moment be any smooth section with $v|_{\partial M} = h$. This implies
    \[
        \langle{\Lambda_{A, Q} f, h}\rangle_{L^2} = \langle{d_A u, d_{-A^*} v}\rangle_{L^2} + \langle{Qu, v}\rangle_{L^2}.
    \]
    Similarly, for $u$ and $w$ as in the statement
    \[
        \langle{\Lambda_{-A^*, Q^*}h, f}\rangle_{L^2} = \langle{d_{-A^*}w, d_A u}\rangle_{L^2} + \langle{Q^*u, w}\rangle_{L^2}.
    \]
    Plugging in $v := w$ in the former formula and identifying with the complex conjugate of the latter one gives the required result.
\end{proof}

Note that the previous results with obvious extensions also hold for sections of $M \times \mathbb{C}^{r \times r}$ equipped with the standard Hermitian metric induced by trace, $\langle{M_1, M_2}\rangle = \Tr(M_1M_2^*)$; the covariant derivative is extended by $d_A U = dU + AU$ for matrix valued sections $U$. We reach the main result of this subsection; its version for \emph{unitary} connections was proved in \cite[Theorem 2.5]{cekic-17}. We will write $|A|^2 = g^{ij} A_i A_j$ (defined in local coordinates).

\begin{proposition}\label{prop:integral-identity}
    Let $F_1, F_2 \in C^\infty(\partial M, \mathbb{C}^{r \times r})$ and consider the unique solutions $U_1, U_2 \in C^\infty(M, \mathbb{C}^{r \times r})$ to 
		\[
			\Lie_{A_1, Q_1} U_1 = 0,\,\, U_1|_{\partial M} = F_1, \quad \Lie_{-A_2^*, Q_2^*} U_2 = 0,\,\, U_2|_{\partial M} = F_2.
		\]
	Then:
\begin{align*}
	    &\langle{(\Lambda_{A_1, Q_1} - \Lambda_{A_2, Q_2}) F_1, F_2}\rangle_{L^2(\partial M, \mathbb{C}^{r \times r})}\\ 
		&=  \langle{(Q_1 - Q_2 + |A_2|^2 - |A_1|^2) U_1, U_2}\rangle_{L^2(M, \mathbb{C}^{r \times r})} + \langle{U_1\cdot dU_2^* - dU_1\cdot U_2^*, A_1^* - A_2^*}\rangle_{L^2(M, \mathbb{C}^{r \times r})}.
\end{align*}	
\end{proposition}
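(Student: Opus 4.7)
My plan is to obtain the identity by computing each of $\langle \Lambda_{A_1,Q_1}F_1, F_2\rangle_{L^2(\partial M)}$ and $\langle \Lambda_{A_2,Q_2}F_1, F_2\rangle_{L^2(\partial M)}$ via a single application of Green's identity (Proposition \ref{prop:green}), in its matrix-valued extension noted just before the statement, and then to subtract. Throughout, $\langle \cdot, \cdot\rangle$ denotes the $L^2$-pairing induced pointwise by the Hilbert-Schmidt inner product $\langle M_1, M_2\rangle = \Tr(M_1 M_2^*)$ on $\mathbb{C}^{r\times r}$.

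For the first DN map, I would apply Proposition \ref{prop:green} with $u = d_{A_1} U_1$, $v = U_2$, and connection $-A_1^*$. The interior term $\langle d_{-A_1^*}^* d_{A_1} U_1, U_2\rangle$ collapses via $\Lie_{A_1,Q_1} U_1 = 0$ to $-\langle Q_1 U_1, U_2\rangle$, while the boundary term is $-\langle \Lambda_{A_1, Q_1} F_1, F_2\rangle_{L^2(\partial M)}$. For the second DN map, I would use the symmetry of Proposition \ref{prop:DN-adjoint-formula} to rewrite $\langle \Lambda_{A_2,Q_2} F_1, F_2\rangle = \langle F_1, \Lambda_{-A_2^*, Q_2^*} F_2\rangle$ and apply Proposition \ref{prop:green} with $u = d_{-A_2^*} U_2$, $v = U_1$, and connection $A_2$; since $\Lie_{-A_2^*, Q_2^*} = d_{A_2}^* d_{-A_2^*} + Q_2^*$ and $\Lie_{-A_2^*, Q_2^*} U_2 = 0$, the interior term again collapses, and after conjugating the sesquilinear pairing the result takes the analogous form with subscript $1$ replaced by $2$. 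In summary, both DN-map pairings may be expressed as
\[
    \langle \Lambda_{A_i,Q_i} F_1, F_2\rangle_{L^2(\partial M)} = \langle Q_i U_1, U_2\rangle_{L^2(M)} + \langle d_{A_i} U_1, d_{-A_i^*} U_2\rangle_{L^2(M)}, \qquad i = 1, 2.
\]

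Subtracting, the leftover $\langle (Q_1 - Q_2) U_1, U_2\rangle$ is immediate, and for the difference of the bilinear terms I would expand $d_{A_i} U_1 = dU_1 + A_i U_1$ and $d_{-A_i^*} U_2 = dU_2 - A_i^* U_2$. The $\langle dU_1, dU_2\rangle$ contributions cancel; the purely quadratic-in-connection terms assemble into $\langle A_2 U_1, A_2^* U_2\rangle - \langle A_1 U_1, A_1^* U_2\rangle$, and a pointwise computation using cyclicity of the trace and the symmetry $g^{ij} = g^{ji}$ converts this into $\langle (|A_2|^2 - |A_1|^2) U_1, U_2\rangle$ in view of the definition $|A|^2 = g^{ij} A_i A_j$. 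The remaining mixed contributions $\langle (A_1 - A_2) U_1, dU_2\rangle - \langle dU_1, (A_1^* - A_2^*) U_2\rangle$ are to be identified with $\langle U_1\cdot dU_2^* - dU_1\cdot U_2^*, A_1^* - A_2^*\rangle$, which follows by expanding both sides pointwise using the identity $(BU)^* = U^* B^*$, cyclicity of the trace, and a relabelling $i \leftrightarrow j$ aided by the symmetry of $g^{ij}$.

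The main obstacle I anticipate is the careful bookkeeping of Hermitian conjugation: when passing through Proposition \ref{prop:DN-adjoint-formula} one must track the sesquilinearity of the boundary pairing, and when re-organising traces to isolate the factor $A_1^* - A_2^*$ on the right of the final identity one must correctly place $U_2^*$ versus $U_2$. Once these conjugates are handled consistently the remaining calculations are routine.
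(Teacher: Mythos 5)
Your proposal is correct and follows essentially the same route as the paper: the paper's proof invokes Proposition \ref{prop:DN-adjoint-formula} (itself established via the Green's identity of Proposition \ref{prop:green}) to express both boundary pairings as interior integrals of the form $\langle Q_i U_1, U_2\rangle + \langle d_{A_i}U_1, d_{-A_i^*}U_2\rangle$, and then performs the same pointwise trace manipulations to isolate the $|A_2|^2 - |A_1|^2$ and $U_1\cdot dU_2^* - dU_1\cdot U_2^*$ terms. The only cosmetic difference is that you re-derive the two interior representations directly from Green's identity rather than citing the ready-made formula.
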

\begin{proof}
    By Proposition \ref{prop:DN-adjoint-formula} we compute
    \begin{align*}
        &\langle{(\Lambda_{A_1, Q_1} - \Lambda_{A_2, Q_2}) F_1, F_2}\rangle_{L^2(\partial M, \mathbb{C}^{r \times r})}\\ 
        &= \langle{\Lambda_{A_1, Q_1} F_1, F_2}\rangle_{L^2(\partial M, \mathbb{C}^{r \times r})} - \langle{F_1, \Lambda_{-A_2^*, Q_2^*} F_2}\rangle_{L^2(\partial M, \mathbb{C}^{r \times r})}\\
        &= \langle{(Q_1 - Q_2) U_1, U_2}\rangle_{L^2(M, \mathbb{C}^{r \times r})} + \underbrace{\langle{d_{A_1}U_1, d_{-A_1^*} U_2}\rangle_{L^2(M, \mathbb{C}^{r \times r})} - \langle{d_{A_2} U_1, d_{-A_2^*} U_2}\rangle_{L^2(M, \mathbb{C}^{r \times r})}}_{T :=}.
    \end{align*}
    We are left to identify the term $T$ with the formula in the statement. Indeed, we have pointwise in local coordinates
    \[
        \langle{dU_1, -A_1^* U_2}\rangle = \langle{\partial_{x_i} U_1 \cdot dx_i, -A_{1j}^* U_2 \cdot dx_j}\rangle = -g^{ij} \Tr (\partial_{x_i} U_1 \cdot U_2^* A_{1j}) = \langle{dU_1 \cdot U_2^*, -A_1^*}\rangle,
    \]
    where we write $A_{i} = A_{ij} dx_j$ for $i = 1 ,2$, as well as
    \[
        \langle{A_1 U_1, dU_2}\rangle = g^{ij} \Tr(A_{1i}U_1 \cdot \partial_{x_j} U_2^*) = g^{ij} \Tr(U_1 \cdot\partial_{x_j}U_2^* \cdot A_{1i}) = \langle{U_1 \cdot dU_2^*, A_1^*}\rangle, 
    \]
    and also
    \[
        \langle{A_1 U_1, -A_1^* U_2}\rangle = -g^{ij} \Tr(A_{1i}U_1 \cdot U_2^* A_{1j}) = -g^{ij} \Tr(A_{1j} A_{1i} U_1 U_2^*) = -\langle{|A_1|^2 U_1, U_2}\rangle.
    \]
    Combining the three previous formulas, we obtain
    \begin{align*}
        T &= \langle{dU_1, (A_2^* - A_1^*) U_2}\rangle_{L^2} + \langle{(A_1 - A_2) U_1, dU_2}\rangle_{L^2} + \langle{A_1 U_1, -A_1^* U_2}\rangle_{L^2} - \langle{A_2 U_1, -A_2^* U_2}\rangle_{L^2}\\
        &= \langle{d U_1\cdot U_2^*, A_2^* - A_1^*}\rangle_{L^2} + \langle{U_1 \cdot dU_2^*, A_1^* - A_2^*}\rangle_{L^2} + \langle{(|A_2|^2 - |A_1|^2) U_1, U_2}\rangle_{L^2},
    \end{align*}
    which brings $T$ to the required form and completes the proof.
\end{proof}

\subsection{Boundary determination}\label{ssec:boundary-determination} The following result was proven in \cite[Theorem 3.4]{cekic-20} for \emph{unitary} connections, and we now give its version in the non-unitary case.

\begin{proposition}\label{proposition:boundary-determination}
    If $\Lambda_{A_1, Q_1} \equiv \Lambda_{A_2, Q_2}$, there exist $G_1, G_2 \in C^\infty(M, \GL_r(\mathbb{C}))$, with $G_i|_{\partial M} \equiv \id$, such that
	\[
		G_2^*A_2 - G_1^*A_1, \quad G_2^* Q_2 - G_1^*Q_1,
	\]
	have full jets equal to zero on $\partial M$. Moreover, there exists $\varepsilon > 0$ such that in boundary normal coordinates in the $\varepsilon > 0$ neighbourhood of $\partial M$, the normal components of $G_2^*A_2$ and $G_1^*A_1$ vanish; if $A_i$ are unitary we may also require that $G_i$ take values in unitary matrices, for $i = 1, 2$.
\end{proposition}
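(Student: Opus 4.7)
The plan is to follow the structure of \cite[Theorem 3.4]{cekic-20}, which handles the unitary case, and adapt it to drop the unitarity constraint. The argument splits into a gauge fixing step near $\partial M$, a pseudodifferential symbol analysis of the DN map, and a use of Proposition~\ref{prop:DN-adjoint-formula} to compensate for the loss of self-adjointness.

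\textbf{Gauge fixing near the boundary.} In boundary normal coordinates $(x', x_n)$ on a collar $\partial M \times [0, \varepsilon)$, decompose $A_i = A_i^T(x', x_n) + A_{i, n}(x', x_n)\,dx_n$ for $i = 1, 2$, and solve the linear ODE
\[
    \partial_{x_n} G_i + A_{i, n}\, G_i = 0, \qquad G_i(x', 0) = \id.
\]
Standard ODE theory furnishes a smooth $G_i$ with values in $\GL_r(\C)$, and a direct computation using the definition of $G_i^* A_i$ shows that its $dx_n$ component vanishes identically in the collar. If $A_i$ is unitary then $A_{i,n}$ is skew-Hermitian, so the ODE preserves unitarity and $G_i$ takes values in $\mathrm{U}(r)$. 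Extend each $G_i$ smoothly from the collar to all of $M$, keeping the range constraint. By Proposition~\ref{prop:DN-invariance} and $G_i|_{\partial M} = \id$, we have $\Lambda_{G_i^* A_i, G_i^* Q_i} = \Lambda_{A_i, Q_i}$, so writing $\til A_i := G_i^* A_i$ and $\til Q_i := G_i^* Q_i$, it remains to prove that all boundary jets of $\til A_1 - \til A_2$ and $\til Q_1 - \til Q_2$ vanish.

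\textbf{Symbol analysis of the DN map.} The operator $\Lambda_{\til A_i, \til Q_i}$ is a classical matrix-valued pseudodifferential operator of order $1$ on $\partial M$. Following the usual factorization procedure (write $\Lie_{\til A_i, \til Q_i}$ in boundary normal coordinates and factor it into first-order pseudodifferential factors in the tangential variables, modulo smoothing), one obtains a recursive formula for the full symbol of $\Lambda_{\til A_i, \til Q_i}$. Because of the gauge choice $\til A_{i, n} \equiv 0$, the symbol at order $1 - k$ is an explicit polynomial expression in $\xi'$ involving the $k$-jet at $\partial M$ of $\til A_i^T$ and the $(k-1)$-jet of $\til Q_i$, together with already-determined lower jets. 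Matching the symbols of $\Lambda_{\til A_1, \til Q_1}$ and $\Lambda_{\til A_2, \til Q_2}$ at each order and inductively inverting these relations yields equality of all boundary jets of the tangential components of $\til A_i$ and of $\til Q_i$, which is what we want.

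\textbf{The main obstacle.} The key new issue compared with \cite{cekic-20} is that in the non-unitary setting $\Lambda_{\til A_i, \til Q_i}$ is no longer essentially self-adjoint, so at each order of the symbol the Hermitian and anti-Hermitian parts are no longer automatically separated, and a priori the jet of $\til A_i^T$ and the jet of $\til Q_i$ can be mixed together in a way that obstructs direct recovery. To resolve this I would combine the identity $\Lambda_{\til A_1, \til Q_1} = \Lambda_{\til A_2, \til Q_2}$ with its adjoint; by Proposition~\ref{prop:DN-adjoint-formula} the latter reads $\Lambda_{-\til A_1^*, \til Q_1^*} = \Lambda_{-\til A_2^*, \til Q_2^*}$, which provides a second, independent set of symbolic identities. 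Adding and subtracting these at each order disentangles the Hermitian and skew-Hermitian pieces and recovers all jets separately, reducing the non-unitary case to the same recursive system of linear equations that one solves in the unitary case. The rest of the proof is then a direct transcription of \cite[Theorem 3.4]{cekic-20}.
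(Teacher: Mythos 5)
Your proposal follows the same route as the paper, which disposes of this proposition in a few lines by observing that the proof of \cite[Theorem 3.4]{cekic-20} carries over verbatim: the local-coordinate expression for $\Lie_{A,Q}$ in Proposition \ref{prop:laplace-local-coord} is algebraically identical to the one in the unitary case, and the recursive symbol computation for $\Lambda_{A,Q}$ is derived purely from those coefficients. Your gauge-fixing step (solving $\partial_{x_n}G_i + A_{i,n}G_i = 0$ in the collar, which kills the normal component and preserves unitarity when $A_{i,n}$ is skew-Hermitian) and the factorization/symbol-matching step are exactly what that reference does.

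Where you diverge is in the ``main obstacle'' paragraph, and there the obstacle you identify is not actually present. The recovery of the boundary jets from the full symbol of the DN map is not performed by splitting the operator into Hermitian and anti-Hermitian parts; it is performed by matching, order by order, the matrix-valued coefficients of polynomials in the tangential frequency $\xi'$. At the relevant order the symbol contains a single (un-symmetrized) copy of $\til A_i^T(\xi')$, linear in $\xi'$, so varying $\xi'$ recovers the full matrix $1$-form $\til A_i^T$ on $\partial M$ --- not merely its skew-Hermitian part --- and the subsequent orders then recover $\til Q_i$ and the higher normal derivatives by the same recursion. Nothing in this recursion uses self-adjointness of $\Lambda_{\til A_i,\til Q_i}$; this is precisely why the paper can cite \cite{cekic-20} without modification. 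Your proposed fix --- pairing the identity $\Lambda_{\til A_1,\til Q_1}=\Lambda_{\til A_2,\til Q_2}$ with its adjoint $\Lambda_{-\til A_1^*,\til Q_1^*}=\Lambda_{-\til A_2^*,\til Q_2^*}$ via Proposition \ref{prop:DN-adjoint-formula} --- is legitimate and would supply extra symbolic identities, so it does no harm, but it is redundant; invoking it as the essential new ingredient slightly misrepresents where the content of the non-unitary extension lies (namely, nowhere: the algebra is unchanged).
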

\begin{proof}
    In fact, the proof of \cite[Theorem 3.4]{cekic-20} applies also in this setting, by simply noticing that the form of the connection Laplacian in \cite[Equation (3.3)]{cekic-20} is precisely the same (algebraically) as the expression for $\Lie_{A, Q}$ derived in Proposition \ref{prop:laplace-local-coord}. Indeed, since the symbol expansion of the pseudodifferential operator $\Lambda_{A, Q}$ derived after (3.3) is derived from the coefficients of $\Lie_{A, Q}$, the result follows.
\end{proof}

In the following sections we will often work with a (codimension zero) embedding $M \Subset N^\circ \subset N$, and assume that $A_i$ and $Q_i$ are arbitrarily extended from $M$ to connections and matrix potentials on $N$, which are compactly supported on $N^\circ$, respectively for $i = 1, 2$. Moreover, if the DN maps $\Lambda_{A_1, Q_1}$ and $\Lambda_{A_2, Q_2}$ agree, using Proposition \ref{proposition:boundary-determination}, we will assume that the corresponding extensions are chosen such that they agree on the complement of $M$.

\subsection{The Del Bar equation}\label{ssec:del-bar} We will also record a useful PDE lemma about the Del Bar equation. For $\tau \in \mathbb{R}$, introduce the weighted $L^2$ norm by
\[
    \|u\|_{\tau}^2 := \int_{\mathbb{R}^2} (1 + |x|^2)^\tau |u(x)|^2\, dx.
\]
Then $L^2_\tau(\mathbb{R}^2)$ is the closure of $C_{\comp}^\infty(\mathbb{R}^2)$ with respect to $\|\bullet\|_\tau^2$ and defines a Hilbert space. We will identify $\mathbb{R}^2 \simeq \mathbb{C}$ and write $\partial_{\bar{z}} := \frac{1}{2}(\partial_{x_1} + i\partial_{x_2})$. The following was proved in \cite[Lemma 5.1]{nakamura-uhlmann-02}.

\begin{lemma}\label{lemma:del-bar-source}
    Let $A \in C_{\comp}^\infty(\mathbb{R}^2, \mathbb{C}^{r \times r})$. Then, there exist $C > 0$ and a negative constant $\tau \in \mathbb{R}$ with $|\tau|$ large enough, such that for any $f\in L^2_{\tau + 2}(\mathbb{R}^2, \mathbb{C}^{r \times r})$, the equation
    \[
        \partial_{\bar{z}} u(z) + A(z) u(z) = f(z), \quad z \in \mathbb{C},  
    \]
    admits a unique solution $u \in L^2_\tau(\mathbb{R}^2, \mathbb{C}^{r \times r})$ with the estimate
    \[
        \|u\|_\tau \leq C \|f\|_{\tau + 2}.
    \]
%    Moreover, the same result holds (locally) parametrically if we require that $f$ and $A$ depend smoothly
\end{lemma}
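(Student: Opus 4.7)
The plan is to reduce the first-order perturbed problem to a fixed-point equation for the Cauchy transform, and then invert the resulting operator by a Neumann series that becomes convergent for $|\tau|$ sufficiently large.

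First I would introduce the Cauchy operator
\[
    Tf(z) := -\frac{1}{\pi}\int_{\mathbb{C}} \frac{f(w)}{z-w}\, dA(w),
\]
which is a distributional right inverse of $\partial_{\bar z}$, acting componentwise on matrix-valued functions. The key analytic input is a weighted mapping estimate of the form $\|Tf\|_\tau \leq C\|f\|_{\tau + 2}$ valid in the range of $\tau$ of interest. One proves this by splitting the kernel $(z-w)^{-1}$ into a piece supported near the diagonal $|z-w|\leq 1$ and a long-range piece; the near-diagonal part is bounded by the classical fact that $T$ is locally bounded on $L^2$, while the long-range part is bounded by Schur's test applied against the weight $(1+|z|^2)^\tau (1+|w|^2)^{-\tau-2}$. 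The exponent shift by $2$ in the source norm is precisely what makes the weight integrable in the appropriate direction.

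Next I would rewrite the equation as $u = T(f - Au)$, i.e.\ $(I + T\circ M_A)u = Tf$, where $M_A$ denotes multiplication by $A$. Since $A$ is compactly supported, $M_A$ maps $L^2_\tau$ boundedly into $L^2_{\tau+2}$ with a norm depending on $\|A\|_\infty$ and $\mathrm{supp}(A)$ but \emph{independent of $\tau$ restricted to a bounded region}; composing with the estimate above shows $T\circ M_A \in \mathcal{L}(L^2_\tau)$. The crucial quantitative point is then to refine this for $\tau \to -\infty$: since $M_A u$ is always supported in a fixed compact set $K$, when one estimates $T(M_Au)(z)$ for $|z|$ large the factor $(z-w)^{-1}$ contributes an extra decay $|z|^{-1}$, and a careful accounting (splitting the output integral into $|z|\le R$ and $|z|\geq R$ with $R$ chosen in terms of $|\tau|$) yields
\[
    \|T\circ M_A\|_{L^2_\tau \to L^2_\tau} \longrightarrow 0 \quad \text{as } \tau \to -\infty.
\]
In particular, for $|\tau|$ large enough this norm is strictly less than $1$, so $I+T\circ M_A$ is invertible by Neumann series. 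Setting $u := (I+T\circ M_A)^{-1}Tf$ gives a solution in $L^2_\tau$ with $\|u\|_\tau \leq C\|f\|_{\tau+2}$; uniqueness in $L^2_\tau$ follows because any difference $v$ of two solutions satisfies $v = -T(Av)$, and $\|v\|_\tau \leq \|T\circ M_A\|\,\|v\|_\tau$ forces $v=0$.

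The main obstacle is the quantitative step: making the operator norm of $T\circ M_A$ small for $|\tau|\gg 1$. This is where the specific combination of the compact support of $A$, the smoothing by $T$, and the polynomial weight at infinity must balance correctly; the shift by $2$ in the indices of $L^2_\tau$ and $L^2_{\tau+2}$ is exactly what allows the weighted Young/Schur estimate to close. The matrix-valued nature of the problem introduces no new difficulty, as $T$ and the weighted norms act entrywise and $M_A$ is a bounded multiplier.
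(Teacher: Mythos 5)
The paper offers no proof of this lemma at all: it is quoted directly from Nakamura--Uhlmann \cite[Lemma 5.1]{nakamura-uhlmann-02}. So the only question is whether your argument stands on its own, and it does not: the central quantitative claim, that $\|T\circ M_A\|_{L^2_\tau\to L^2_\tau}\to 0$ as $\tau\to-\infty$, is false --- in fact the opposite happens. What the heuristic misses is that for $\tau$ very negative the unit ball of $L^2_\tau$ contains functions that are enormous on the fixed compact set $K=\supp A$. Take $u=(1+|x|^2)^{|\tau|/2}$ on a small disk inside $K$ centred at some $p\neq 0$ and $u=0$ elsewhere: then $\|u\|_\tau$ is independent of $\tau$, while $\|Au\|_{L^2}\gtrsim (1+|p'|^2)^{|\tau|/2}$ for $p'$ near $p$, i.e.\ it grows exponentially in $|\tau|$. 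Consequently $T(Au)$ is of that same exponential size on a fixed neighbourhood of the origin, where the output weight $(1+|z|^2)^{\tau}$ only costs a factor like $(1+\varepsilon^2)^{-|\tau|}$, which decays strictly more slowly. Hence $\|TM_Au\|_\tau/\|u\|_\tau\to\infty$ as $\tau\to-\infty$ for any $A$ not supported at the origin, the Neumann series does not converge, and the contraction-based uniqueness argument collapses with it. Enlarging $|\tau|$ enlarges the space $L^2_\tau$ and works against a smallness argument, not for it.

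The preliminary estimate $\|Tf\|_\tau\le C\|f\|_{\tau+2}$ also fails in the regime you need: for $\tau<-3$ a function in $L^2_{\tau+2}$ may grow polynomially at infinity, so $Tf$ is not even a convergent integral, and your Schur test would require $\int(1+|w|^2)^{-(\tau+2)/2}|z-w|^{-1}\,dw<\infty$, which diverges as soon as $\tau<-2$ because the exponent $-(\tau+2)/2$ is then positive. (Your scheme --- Cauchy transform, weighted Schur test with the shift by $2$, Neumann/Fredholm perturbation --- is essentially the correct and classical one in the range $-1<\tau<0$, where $L^2_{\tau+2}$ forces decay of $f$; it is the regime ``$|\tau|$ large'' that breaks every step.) Two further cautions: your uniqueness step silently assumes that a homogeneous solution $v\in L^2_\tau$ satisfies $v=-T(Av)$, whereas in general $v+T(Av)$ is only an entire function in $L^2_\tau$, and for $\tau<-1$ that space contains all polynomials of degree $<|\tau|-1$ (already with $A=0$ the constant $1$ lies in $L^2_\tau$ and is killed by $\partial_{\bar z}$, so the uniqueness assertion is delicate and cannot come from a contraction). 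For very negative weights, existence is naturally obtained by a surjectivity/duality or Fredholm-index argument (the adjoint problem sits in the dual weight, which forces decay and where a Liouville-type vanishing theorem for pseudoanalytic matrices applies), which is the kind of mechanism used in the cited source rather than a small-perturbation argument.
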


\subsection{Extensions.}\label{ssec:extensions} Assume now $(M, g)$ has strictly convex boundary. On the \emph{unit sphere bundle} $SM := \{(x, v) \in TM \mid g_x(v, v) = 1\}$, define the \emph{geodesic vector field} $X$ by
\[
    X(x, v) := \frac{d}{dt}\Big|_{t = 0} (\gamma(t), \dot{\gamma}(t)), \quad (x, v) \in SM, 
\]
where $\gamma(t)$ is the geodesic in $M$ defined by the initial condition $\gamma(0) = x$ and $\dot{\gamma}(0) = v$. By the results in \cite[Section 2]{Guillarmou-17-1}, there is an extension $(N, g)$ of $(M, g)$ (where the metric is denoted with the same letter), such that $M \Subset N^\circ$, with the following properties
\begin{itemize}
	\item $\partial N$ is strictly convex;
	\item there is a vector field $\widetilde{X}$ on $SN$, complete in $SN^\circ$, such that it agrees with the geodesic vector field $X$ on $SM$;
	\item every trajectory of $\widetilde{X}$ that leaves $SM$ never comes back.
\end{itemize}
Moreover, if $(M, g)$ is simple, i.e. the boundary of $\partial M$ is strictly convex (in the sense that the second fundamental form is strictly positive) and the exponential map is a diffeomorphism from its domain of definition for any point in the interior $M^\circ$, then since the simplicity property is $C^2$ open, there is a simple manifold $(M_e, g) \Subset (N^\circ, g)$ containing $M$ in its interior.

 \subsection{Topological and geometrical reductions.}\label{ssec:topological-reduction} 
 
    Let us state auxiliary topological lemmas that will be applied in the setting of the main theorem. Assume $M$ connected and embedded (with codimension zero) into $\mathbb{R}^2 \times M_0$ where $M_0$ has connected boundary. Call the connected component(s) of $\partial M$ lying in the connected component of $\mathbb{R}^2 \times \partial M_0$ inside $\mathbb{R}^2 \times M_0 \setminus M^\circ$ the \emph{outer boundary} of $M$; any other component of $\partial M$ will be called \emph{inner boundary}. Then we have:
	
	\begin{lemma}\label{lemma:topology-1}
		Assume $M_0$ is diffeomorphic to a ball. Then, the outer boundary is connected and bounds a compact connected codimension zero smooth submanifold $\widetilde{M} \subset \mathbb{R}^2 \times M_0$.
	\end{lemma}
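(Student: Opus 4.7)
The plan is to realize $\widetilde{M}$ as the complement (inside $\mathbb{R}^2 \times M_0$) of the unique unbounded component of $(\mathbb{R}^2 \times M_0) \setminus M$, and then to deduce connectedness of the outer boundary from Alexander--Lefschetz duality applied to $\widetilde{M}$ viewed as a compact submanifold of $\mathbb{R}^{d+2}$, where $d := \dim M_0$.

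First I would construct $V_\infty$. Since $M$ is compact and sits in the interior, pick $R > 0$ with $M \Subset [-R,R]^2 \times M_0$, so the end $E := (\mathbb{R}^2 \setminus [-R,R]^2) \times M_0$ is disjoint from $M$ and connected. The set $\mathbb{R}^2 \times \partial M_0$ is connected when $\dim M_0 \geq 2$ and meets $E$; when $\dim M_0 = 1$ both sheets of $\mathbb{R}^2 \times \partial M_0$ meet $E$. In either case, $\mathbb{R}^2 \times \partial M_0$ and $E$ lie in a common connected component $V_\infty$ of $(\mathbb{R}^2 \times M_0) \setminus M$, which is the unique unbounded one and thus the component singled out in the statement. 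Setting
\[
  \widetilde{M} := (\mathbb{R}^2 \times M_0) \setminus V_\infty,
\]
the inclusion $V_\infty \supset \mathbb{R}^2 \times \partial M_0$ forces $\widetilde{M} \subset \mathbb{R}^2 \times M_0^\circ \cong \mathbb{R}^{d+2}$. By construction $\widetilde{M}$ is closed and bounded, hence compact. Writing $\widetilde{M} = M \cup V_1 \cup \cdots \cup V_k$ with $V_j$ the bounded components of $(\mathbb{R}^2 \times M_0) \setminus M$, each $\overline{V_j}$ meets the connected set $M$ along part of $\partial M$, so $\widetilde{M}$ is connected. A local analysis near $\partial M$ shows that $\widetilde{M}$ is a smooth codimension-zero submanifold, and its boundary $\partial \widetilde{M} = \partial V_\infty$ consists exactly of the components of $\partial M$ adjacent to $V_\infty$---precisely the outer boundary of $M$.

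To prove $\partial \widetilde{M}$ is connected, I would pass to the one-point compactification $S^{d+2}$, so that $S^{d+2} \setminus \widetilde{M} = V_\infty \cup \{\infty\}$ is connected. By Alexander duality, $\widetilde{H}^{d+1}(\widetilde{M}) \cong \widetilde{H}_0(S^{d+2} \setminus \widetilde{M}) = 0$, and Lefschetz duality (using $\widetilde{M}$ compact, orientable, with non-empty boundary) gives $H_1(\widetilde{M}, \partial \widetilde{M}) \cong H^{d+1}(\widetilde{M}) = 0$ and $H_0(\widetilde{M}, \partial \widetilde{M}) \cong H^{d+2}(\widetilde{M}) = 0$. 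The low end of the long exact sequence of the pair $(\widetilde{M}, \partial \widetilde{M})$ then collapses to
\[
  0 \longrightarrow H_0(\partial \widetilde{M}) \longrightarrow H_0(\widetilde{M}) = \mathbb{Z} \longrightarrow 0,
\]
forcing $\partial \widetilde{M}$ to be connected, which is the outer boundary statement.

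The main obstacle, in my view, is the careful bookkeeping rather than any individual deep step: one must correctly identify the outer boundary with $\partial V_\infty$, verify the smooth manifold-with-boundary structure of $\widetilde{M}$ at the interface points, and accommodate the slightly irregular case $\dim M_0 = 1$ in which $\partial M_0$ is itself disconnected. Once this topological setup is cleanly in place, the Alexander--Lefschetz step is essentially mechanical.
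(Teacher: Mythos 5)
Your proof is correct, but it reaches the conclusion by a different route than the paper. The paper argues by contradiction: assuming the outer boundary had two components $B_1, B_2$, it applies the Jordan--Brouwer separation theorem to each $B_i$ separately to produce compact fillings $D_1, D_2$, shows these must lie in each other's exteriors, and derives a contradiction with the connectedness of $M$; the existence of $\widetilde{M}$ is then a second, separate application of Jordan--Brouwer. You instead build $\widetilde{M}$ first, as the complement of the unbounded component $V_\infty$, and then read off connectedness of $\partial\widetilde{M}$ from $H_1(\widetilde{M},\partial\widetilde{M})=H_0(\widetilde{M},\partial\widetilde{M})=0$ via Alexander and Lefschetz duality together with the long exact sequence of the pair. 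Both arguments hinge on the ball hypothesis only through the identification $\mathbb{R}^2\times M_0^\circ\cong\mathbb{R}^{d+2}$; your version has the advantage of producing $\widetilde{M}$ and its connectedness in one construction and of making the homological mechanism behind Jordan--Brouwer explicit, at the cost of the extra bookkeeping you acknowledge (matching the paper's outer boundary, defined via components of $\mathbb{R}^2\times M_0\setminus M^\circ$, with $\partial V_\infty$, and checking smoothness of $\widetilde{M}$ along the inner boundary components it absorbs). One small imprecision: since $V_\infty$ contains $\mathbb{R}^2\times\partial M_0$, the set $S^{d+2}\setminus\widetilde{M}$ is $\bigl(V_\infty\cap(\mathbb{R}^2\times M_0^\circ)\bigr)\cup\{\infty\}$ rather than $V_\infty\cup\{\infty\}$; this set is still connected (the interior of a connected manifold with boundary is connected, and $\infty$ lies in its closure), so the duality step is unaffected.
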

		\begin{proof}
			Assume the outer boundary had (at least) two distinct connected components $B_1$ and $B_2$. By the Jordan-Brouwer separation theorem, each one of $B_i$ bounds a connected compact smooth manifold $D_i$ with $\partial D_i = B_i$, such that $\mathbb{R}^2 \times M_0 \setminus D_i$ is connected and unbounded, for $i = 1, 2$. Therefore, $D_1$ is in the exterior of $D_2$ and vice-versa, as otherwise this would contradict $B_1$ and $B_2$ being the outer boundary. But $D_1$ and $D_2$ being in each others complement contradicts the fact that $M$ is connected.
			
			The remaining claim follows again from the Jordan-Brouwer separation theorem.
		\end{proof}

  We remark that simple manifolds are necessarily diffeomorphic to a ball. We now give a slightly more general reduction.

  \begin{lemma}\label{lemma:topology-2}
      Assume $(M, g) \Subset (\mathbb{R}^2 \times M_0^\circ, e \oplus g_0)$ with $(M_0, g_0)$ arbitrary smooth, connected, and compact Riemannian manifold. Then there exists a smooth, connected, and compact $(\widetilde{M}_0, g_0)$ with strictly convex boundary diffeomorphic to a sphere extending $(M_0, g_0)$. Moreover, there exists a smooth, connected, compact, and codimension zero submanifold $\widetilde{M} \subset \mathbb{R}^2 \times \widetilde{M}_0$ containing $M$ such that $\partial \widetilde{M}$ is precisely the outer boundary.
  \end{lemma}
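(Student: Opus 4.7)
The plan is to split the lemma into two independent constructions. First, produce the extension $\widetilde{M}_0$ with the required boundary properties by doubling, deleting a small ball, and conformally fixing up the convexity. Then define $\widetilde{M}$ by filling in the inner ``holes'' of $M$ inside $\mathbb{R}^2\times\widetilde{M}_0$.

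For the first construction, I would form the Riemannian double $\widehat{M}_0:=D(M_0)$ along $\partial M_0$, smoothing the doubled metric in a tubular neighbourhood of the seam while keeping its restriction to $M_0$ equal to $g_0$; this yields a closed Riemannian manifold isometrically containing $(M_0,g_0)$. Pick a point $p\in\widehat{M}_0\setminus M_0$ and a radius $\delta>0$ smaller than both the injectivity radius at $p$ and $d(p,M_0)$, so that $B_\delta(p)$ is a normal geodesic ball disjoint from $M_0$; then $\widetilde{M}_0^{\#}:=\widehat{M}_0\setminus B_\delta(p)$ is a smooth compact extension of $M_0$ with boundary diffeomorphic to $S^{\dim M_0-1}$. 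However, the outward normal of $\widetilde{M}_0^{\#}$ points into $B_\delta(p)$, so by the Gauss lemma its second fundamental form is $-\tfrac{1}{\delta}g_0|_{\partial\widetilde{M}_0^{\#}}+O(1)$ as $\delta\to 0$, i.e.\ concave. To remedy this, I would perform a conformal rescaling $\tilde{g}_0:=e^{2\phi}g_0$, with $\phi$ a smooth function depending only on $r:=d(p,\cdot)$, supported in a small annulus $B_{2\delta}(p)\setminus B_\delta(p)$ disjoint from $M_0$, and with $-\partial_r\phi|_{r=\delta}$ sufficiently large; the conformal change formula $\tilde{II}=e^{\phi}\bigl(II_{g_0}+\nu(\phi)\,g_0|_{\partial\widetilde{M}_0^{\#}}\bigr)$ then makes $\tilde{II}$ positive definite, while $\phi\equiv 0$ on $M_0$ preserves the isometric embedding. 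Setting $\widetilde{M}_0:=\widetilde{M}_0^{\#}$ with metric $\tilde g_0$ (which, abusing notation, we still call $g_0$) delivers the desired extension.

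For the second construction, consider the closed set $V:=(\mathbb{R}^2\times\widetilde{M}_0)\setminus M^\circ$; since $M\Subset\mathbb{R}^2\times M_0^\circ$, the boundary pieces $\partial M$ and $\mathbb{R}^2\times\partial\widetilde{M}_0$ are disjoint, so $V$ is itself a smooth manifold with boundary. Decompose $V=V_0\sqcup V_1\sqcup\dotsb\sqcup V_k$ into connected components, where $V_0$ is the unique component containing the connected set $\{|x|>R\}\times\widetilde{M}_0$ for some $R$ exceeding the $\mathbb{R}^2$-extent of $M$; the connected set $\mathbb{R}^2\times\partial\widetilde{M}_0$ meets this region and hence also lies in $V_0$. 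Each remaining component $V_i$ ($i\geq 1$) is therefore compact with bounded $\mathbb{R}^2$-projection and $\partial V_i\subset\partial M$. Set $\widetilde{M}:=M\cup V_1\cup\dotsb\cup V_k$: near any common boundary component $V_i\cap M\subset\partial M$, the ambient space is locally the union of the two sides, so $\widetilde{M}$ is locally an open subset of $\mathbb{R}^2\times\widetilde{M}_0$ there, giving a smooth, compact, connected, codimension-zero submanifold. Its boundary consists exactly of the components of $\partial M$ adjacent to $V_0$; to identify these with the outer boundary from Lemma \ref{lemma:topology-1}, I would verify that $V_0\cap(\mathbb{R}^2\times M_0)$ equals the component of $(\mathbb{R}^2\times M_0)\setminus M^\circ$ containing $\mathbb{R}^2\times\partial M_0$, by noting that any path in $V_0$ starting inside $\mathbb{R}^2\times M_0$ and reaching $\{|x|>R\}\times\widetilde{M}_0$ either never leaves $\mathbb{R}^2\times M_0$ (landing in the right component via $\{|x|>R\}\times M_0$) or first crosses $\mathbb{R}^2\times\partial M_0$.

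The main obstacle I expect is the strict convexity of $\partial\widetilde{M}_0$: naively deleting a geodesic ball from a closed extension produces a concave boundary, so a controlled conformal modification of the metric away from $M_0$ is essential. The rest of the proof reduces to standard facts—existence of Riemannian doublings, the Gauss lemma for small geodesic spheres, the conformal transformation of the second fundamental form, and the topology of connected components and gluings along common boundary components.
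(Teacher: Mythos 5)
Your proposal is correct, and the second half (filling in the inner holes) is essentially identical to the paper's argument: the paper also takes the connected components of $(\mathbb{R}^2\times\widetilde M_0)\setminus M^\circ$ other than the one reaching $\mathbb{R}^2\times\partial\widetilde M_0$, observes they are compact with boundary contained in the inner boundary of $M$, and adjoins them to $M$. Your extra verification that the boundary components adjacent to the unbounded component are exactly the outer boundary in the sense of \S\ref{ssec:topological-reduction} is a welcome detail the paper leaves implicit.

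The first half differs only in mechanism. The paper doubles $M_0$, deletes a small disk from the complementary copy, and glues in a cylinder $[0,1]\times\mathbb{S}^{n-1}$ carrying a metric that is strictly convex at the free end; you instead delete a geodesic ball and restore convexity by a radial conformal factor supported near the new boundary. Both achieve a sphere boundary that is strictly convex while leaving $(M_0,g_0)$ isometrically embedded, and your route has the advantage of being quantitatively checkable via the conformal transformation law for the second fundamental form, whereas the paper's cylinder gluing requires an (unstated) interpolation of metrics across the gluing region. Two small points to tighten: (i) the conformal factor must be allowed to have nonvanishing radial derivative \emph{at} $r=\delta$, so its support should be taken in the closed collar $\{\delta\le r\le 2\delta\}$ relative to $\widetilde M_0^{\#}$ (and you need $2\delta<d(p,M_0)$, not just $\delta$); (ii) the metric on the double is best described as an arbitrary smooth extension of $g_0$ across $\partial M_0$ (as the paper does) rather than a ``smoothing near the seam keeping $g_0$ on $M_0$'', since any tubular neighbourhood of the seam meets $M_0$. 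Neither affects the validity of the argument. Finally, when $\dim M_0=1$ the set $\mathbb{R}^2\times\partial\widetilde M_0$ is disconnected, but your argument survives because both of its components meet the connected set $\{|x|>R\}\times\widetilde M_0$.
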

  \begin{proof}
      For the first claim, consider first the \emph{double} $(M_0', g_0)$ of $(M_0, g_0)$, that is, the manifold obtained by smoothly gluing $M_0$ with itself along the boundary, and extending $g_0$ smoothly in an arbitrary way. Then, delete a small disk $D$ from the complement of $(M_0, g_0)$ inside $(M_0', g_0)$ and in its place glue a cylinder diffeomorphic to $[0, 1] \times \mathbb{S}^{n - 1}$, with a metric $g_{\mathrm{cyl}}$ which is strictly convex metric at one of the boundary components. This completes the proof.

      To see the second statement, consider a connected component $B$ of the inner boundary of $\partial M$. Consider the connected component $\mc{U}$ of $\mathbb{R}^2 \times M_0 \setminus M^\circ$ which has $B$ as one of the components of the boundary. We claim that $\mc{U}$ is compact and all the components of $\partial \mc{U}$ are inner boundary. Indeed, if $\mc{U}$ is non-compact, then it is connected to $\mathbb{R}^2 \times \partial M_0$, which contradicts the fact that $B$ is inner boundary. Next, if there is a connected component $B'$ of $\partial \mc{U}$ which is outer boundary of $M$, this would again contradict the fact that $B$ is inner boundary.

      Therefore, there is a finite number of compact smooth disjoint manifolds with boundary on the inner boundary of $M$, denoted by $\mc{U}_1, \dotsc, \mc{U}_k$, such that $\widetilde{M} := M \cup \bigcup_{i = 1}^k \mc{U}_i$ satisfies the conditions of the lemma. This completes the proof.
  \end{proof}
  
\section{Complex Geometric Optics and the Del Bar equation}\label{sec:CGO-del-bar}

In this section we consider the easier setting of \emph{simple} (see \S \ref{ssec:extensions} for a definition) transversal manifolds. We construct the Complex Geometric Optics (CGO) solutions in this setting and as a consequence reduce the main problem to solving a certain complex transport problem. Before proving Theorem \ref{thm:main-theorem-gaussian-beams}, we would first like to prove:

\begin{theorem}\label{thm:main-theorem-simple}
	With the same assumptions as in Theorem \ref{thm:main-theorem-gaussian-beams}, with the exception that $(M_0, g_0)$ is now a simple manifold, the same conclusion holds.
\end{theorem}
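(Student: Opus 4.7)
The plan is to follow the scheme outlined in the introduction, specialised to the case of simple transversal geometry which admits a global, explicit CGO construction. First, using Proposition \ref{proposition:boundary-determination}, I would replace $(A_2, Q_2)$ by a gauge-equivalent pair (with gauge equal to the identity on $\partial M$) so that $A_1, A_2$ and $Q_1, Q_2$ have the same full jets along $\partial M$; combining with \S \ref{ssec:extensions} and \S \ref{ssec:topological-reduction} I then extend $(M, g)$ to a compact manifold $N$ isometrically embedded in $(\mathbb{R}^2 \times \widetilde{M}_0, c(e \oplus g_0))$, where $\widetilde{M}_0$ is a slightly larger simple manifold containing $M_0$, and extend $A_i, Q_i$ to agree on $N \setminus M$ with compact support in $N^\circ$. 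The Alessandrini identity of Proposition \ref{prop:integral-identity} then becomes an integral identity on $N$ whose left-hand side vanishes for any choice of Dirichlet data.

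Second, I would construct CGO solutions associated to bicharacteristic leaves, smoothly parameterised by the 2-frame bundle $L$. Simplicity of $(\widetilde{M}_0, g_0)$ is used here: every non-trapped geodesic extends cleanly to the boundary, so each leaf $\mathbb{R}\mu \times \gamma \subset \mathbb{R}^2 \times \widetilde{M}_0$ is globally diffeomorphic to $\mathbb{C}$ via isothermal coordinates, and the foliation structure is smooth. Taking the classical CTA ansatz $u_h = e^{\varphi/h}(a + r_h)$ with a limiting Carleman weight linear in the $\mu$-direction, the leading matrix amplitude $a$ is required to solve a leafwise $\overline{\partial}_A$-equation. Applying Lemma \ref{lemma:del-bar-source} of Nakamura--Uhlmann in weighted $L^2$-spaces along each leaf and combining with a smooth fundamental-matrix argument yields invertible $\GL_r(\mathbb{C})$-valued amplitudes depending smoothly on $(\mu, \nu) \in L$. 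This produces CGO pairs $(U_{1,h}, U_{2,h})$ associated to $\Lie_{A_1, Q_1}$ and $\Lie_{-A_2^*, Q_2^*}$, concentrating on the chosen leaf as $h \to 0$.

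Third, plugging these CGO pairs into Proposition \ref{prop:integral-identity} and passing to $h \to 0$, the boundary term vanishes by the DN-map equality, and the interior term reduces, after a change of variables to $L$, to a condition forcing the ratio matrix $G: L \to \GL_r(\mathbb{C})$ built from the leafwise amplitude families to satisfy a complex transport equation of the form
\begin{equation*}
    \mathbb{X}(\mu + i\nu) G + A_1(x_1, x_2, x)(\mu + i\nu)\, G - G\, A_2(x_1, x_2, x)(\mu + i\nu) = 0
\end{equation*}
on $L$, with $G \equiv \id$ outside $M$; this is the content of Theorems \ref{thm:reduction-to-complex-x-ray-connection} and \ref{thm:reduction-to-complex-x-ray-potential}, and a parallel statement holds for the potentials. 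Finally, following Eskin's trick, I would pick sufficiently many holomorphic families of 2-frames $(\mu(t), \nu(t))$ parameterised by $t \in \mathbb{C} \setminus \{0\}$, apply $\partial_{\bar{t}}$ to the transport equation, and invoke Liouville's theorem to conclude $G$ is constant along these families and hence independent of the 2-frame variable. Thus $G$ descends to a smooth $\GL_r(\mathbb{C})$-valued function on $M$ with $G|_{\partial M} = \id$ satisfying $G^*A_1 = A_2$; running the analogous argument with the potential transport equation yields $G^*Q_1 = Q_2$.

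The main obstacle is the smooth construction of invertible matrix amplitudes in step two: in the scalar case one simply exponentiates a Cauchy-kernel solution to the inhomogeneous $\overline{\partial}$-equation to get a nowhere-vanishing factor, but for $r \geq 2$ noncommutativity prevents any such explicit exponential and one must combine the solvability of Lemma \ref{lemma:del-bar-source} with a delicate holonomy/uniqueness argument to smoothly glue leafwise fundamental matrices together into a well-defined $G$ on $L$. For the unitary refinement, if $A_1$ and $A_2$ are unitary, then the transport equation preserves $\mathrm{U}(r)$-valuedness, which, combined with the boundary condition $G|_{\partial M} = \id$, forces the resulting $G$ to take values in $\mathrm{U}(r)$.
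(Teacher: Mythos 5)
Your outline follows the paper's strategy step for step (boundary determination and extension, leafwise $\overline{\partial}$-amplitudes glued into smooth families, the Alessandrini identity in the limit $h \to 0$ yielding the complex transport equation on $L$, and Eskin's holomorphic families plus Liouville to kill the frame dependence), and you correctly identify the smooth gluing of invertible matrix amplitudes as the main technical hurdle. However, two of your concluding claims are not justified as stated. First, the unitary refinement: it is \emph{not} true that the complex transport equation $\partial_{\bar z}G + A_1(\partial_{\bar z})G - GA_2(\partial_{\bar z}) = 0$ preserves $\mathrm{U}(r)$-valuedness even for skew-Hermitian $A_i$, because taking Hermitian adjoints converts $\partial_{\bar z}$ into $\partial_z$; this is a genuine difference from the real parallel transport ODE. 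The paper instead proves the conjugate-frame symmetry $G(\mu+i\nu) = G^{-*}(\mu-i\nu)$ (Lemma \ref{lemma:symmetries}, Item 3) and only after establishing that $G$ is constant in the frame variable can one evaluate at $\partial_1 \pm i\partial_2$ to deduce $G_0 = G_0^{-*}$.

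Second, the boundary condition: the transport argument only yields $G \equiv \id$ on the \emph{outer} boundary of $M$ (the component of the complement connected to $\mathbb{R}^2 \times \partial M_0$). Since $M$ may have inner boundary components, the assertion that $G|_{\partial M} = \id$ requires an additional argument; the paper supplies this via Lemma \ref{lemma:convexification} (filling in the holes, checking that the DN maps on the enlarged domain still agree) together with a unique continuation comparison of Cauchy data on the outer boundary in Step 1 of the final proof. Relatedly, you never address the conformal factor $c$: the theorem allows arbitrary $c > 0$, and one must run the CGO and transport analysis through the conformal rescaling identity $c^{\frac{n+2}{4}}\Lie_{cg,A,Q}c^{-\frac{n-2}{4}} = \Lie_{g,A,c(Q+Q_c)}$, checking that the leading-order limits are unchanged for the connection and acquire only a positive multiplicative factor for the potential. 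These are fixable but essential steps; without them the proof of the stated theorem is incomplete.
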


\subsection{Limiting Carleman weights}\label{ssec:LCW} Recall that in general, $(N, g_N)$ admits a \emph{limiting Carleman weight} (LCW) if there is an open neighbourhood $(\mc{O}, g_{\mc{O}})$ of $(N, g_N)$ and $\varphi \in C^\infty(\mc{O})$ with non-vanishing gradient, such that the semiclassical Weyl principal symbol $p_\varphi$ of the semiclassical operator $P_\varphi := e^{-\frac{\varphi}{h}} (-h^2 \Delta_g) e^{\frac{\varphi}{h}}$ satisfies
\[
	\{\re p_\varphi, \im p_\varphi\} = 0 \quad \mathrm{when}\quad  p_\varphi = 0.
\]
If this condition is satisfied, then the characteristic set $p_\varphi^{-1}(0) \subset T^*M$ is foliated by $2$-dimensional leaves called \emph{bicharacteristic leaves}, which are tangent to the Hamiltonian vector fields $H_{\re p_{\varphi}}$ and $H_{\im p_\varphi}$. 

We elucidate this definition in the case $(N, g_N) \subset (\mathbb{R}, e) \times (N_0, g_{0N})$; then the coordinate function $\varphi := x_1$ is an LCW: indeed then 
\[
    P_{\varphi} = -h^2 \Delta_g - 1 - 2h\partial_{x_1}, 
\]
and one sees that $p_\varphi(x, \xi) = |\xi|^2 -1 - 2i \xi_1$ and so $p_\varphi^{-1}(0) = \mathbb{R} \times S^*N_0 \subset T^*(\mathbb{R} \times N_0)$, where $S^*N_0 \subset T^*N_0$ denotes the space of unit co-tangent vectors, that we freely identify via musical isomorphism with the unit tangent bundle $SN_0 \subset TN_0$. Then $H_{\re p_\varphi}$ is proportional to the geodesic vector field on $SN_0$ and $H_{\im p_\varphi}$ is proportional to $\partial_{x_1}$ on $\mathbb{R}$ (commuting vector fields), and the bicharacteristic leaf at $(x_1, x, v)$ is given by 
\[
    \{(x_1 + t, \gamma(s), \dot{\gamma}(s)) \mid t \in \mathbb{R},\, s \in (a, b)\} \subset \mathbb{R} \times SN_0,
\]
where $\gamma$ is the geodesic generated by $(x, v)$, and $(a, b)$ its maximal domain of existence. We will often identify this bicharacteristic leaf with its projection onto $\mathbb{R} \times N_0$.

\subsection{CGO construction}\label{ssec:CGO} We collect a few necessary facts about CGO solutions that will get used later; this construction was carried out in \cite[Section 4]{cekic-17} in the case of \emph{unitary} connections. The proof carries over to arbitrary connections and we outline it for completeness. In fact, in this subsection in order to simplify the notation we will work with the \emph{transversally anisotropic (TA)} assumption $(M, g) \Subset (\mathbb{R} \times M_0, e \oplus g_0)$, and we will write $x_1$ for the $\mathbb{R}$ coordinate and $x$ for the $M_0$ coordinate. Enlarge $(M_0, g_0)$ to a slightly larger simple manifold $(N_0, g_{0})$ containing $M_0$ in its interior; write $A$ for a smooth connection on $M \times \mathbb{C}^r$ and $Q \in C^\infty(M, \mathbb{C}^{r \times r})$ for a smooth matrix potential.
 
	Let $p \in \partial N_0$ and consider the polar coordinate system $(r, \theta)$ at $p$. We look for matrix solutions to $\mc{L}_{A, Q} U = 0$ of the form $U := U_h(x_1, r, \theta) := e^{-\frac{x_1 + ir}{h}}(a + R)$ where $h > 0$ will be a small positive (semiclassical) parameter. Setting $\rho := x_1 + ir$ we compute
	\begin{equation}\label{eq:old}
		e^{\frac{\rho}{h}} h^2 \mc{L}_{A, Q} e^{-\frac{\rho}{h}} a = h^2 \mc{L}_{A, Q}a - h\big((-\Delta_g \rho)a - 2g^{ij} A_i (\partial_{x_j}\rho) a - 2\langle{d\rho, da}\rangle\big),
	\end{equation}
	see for instance \cite[Lemma 4.1]{cekic-17}, where we used that $|dx_1 + i dr|^2 = 0$, and wrote $\langle{\bullet, \bullet}\rangle$ and $|\bullet|^2$ for the complex \emph{bilinear} extension of the Riemannian inner product and norm, respectively. Then using
	\begin{align*}
		(-\Delta_g)\rho &= -|g|^{-\frac{1}{2}}  (\partial_{x_1} + i \partial_r) |g|^{\frac{1}{2}},\\
		-2g^{ij} A_i \partial_{x_j} \rho &=  -2A(\partial_{x_1} + i \partial_r) = -2(A_1 + iA_r),\\
		\langle{d\rho, da}\rangle &= (\partial_{x_1} + i \partial_r)a,
	\end{align*}
	where $|g| = \det g$, $A_1 := A(\partial_{x_1})$, and $A_ r := A(\partial_r)$, and writing $\partial_{\bar{z}} := \frac{1}{2} (\partial_{x_1} + i \partial_r)$, we equate the second term of \eqref{eq:old} to zero:
	\[
		\partial_{\bar{z}}a + \frac{1}{2}(A_1 + iA_r) a + \frac{1}{2} |g|^{-\frac{1}{2}} \partial_{\bar{z}} |g|^{\frac{1}{2}} \cdot a = 0.
	\]
	Substituting $b := a |g|^{\frac{1}{4}}$ we obtain the equation
	\begin{equation}\label{eq:transport-1}
		\partial_{\bar{z}}b + \frac{1}{2} (A_1 + iA_r) b = 0.
	\end{equation}
	If instead of $\rho = x_1 + ir$ we chose $\rho = -x_1 + ir$ we would obtain the transport equation
	\begin{equation}\label{eq:transport-2}
		\partial_{z} b + \frac{1}{2} (A_1 - iA_r) b = 0,
	\end{equation}
	where $\partial_z := \frac{1}{2} (\partial_{x_1} - i \partial_r)$. Assuming we are able to solve \eqref{eq:transport-1} with a smooth $b$, to satisfy $\mc{L}_{A, Q}e^{-\frac{\rho}{h}}(a + R) = 0$ we are left to solve
	\[
		e^{\frac{\rho}{h}} \mc{L}_{A, Q} e^{-\frac{\rho}{h}} R = -\mc{L}_{A, Q} a \iff e^{\frac{x_1}{h}} \mc{L}_{A, Q} e^{-\frac{x_1}{h}} (e^{-\frac{ir}{h}}R)= -e^{-\frac{ir}{h}}\mc{L}_{A, Q} a.
	\]
	By \cite[Theorem 3.5]{cekic-17}, there is a solution $R = R_h$ to this equation satisfying
	\[
		\|R_h\|_{L^2} = \mc{O}(h), \quad \|R_h\|_{H^1} = \mc{O}(1), \quad \mathrm{as}\quad h \to 0.
	\]

\subsection{CGOs and the Del Bar equation}\label{ssec:CGO-del-bar}
Assume now $(M, g) \Subset (\mathbb{R}^2 \times M_0^\circ, e \oplus g_0)$. To parametrise bicharacteristic leaves introduced in \S \ref{ssec:LCW}, for the linear LCW given by the Euclidean inner product $\varphi(x_1, x_2, x) := (x_1, x_2) \cdot \mu$ where $0 \neq \mu \in \mathbb{R}^2$, we need one direction in the $\mathbb{R}^2$ direction and one orthogonal direction. More precisely, the space of such leaves is at a point $(x_1, x_2, x) \in \mathbb{R}^2 \times M_0$ modelled by 
\[
	\Gr_2(n) \setminus \Gr_2(n - 2) = \Gr_1(2) \times \Gr_1(n - 2) \cup \Gr_2(2),
\]
where by $\Gr_i(j)$ we denote the Grassmanian of $i$-planes in $\mathbb{R}^j$. The problem is that the latter factor is a point and so this space is \emph{not a manifold} and is not suitable for doing analysis. However, the fibre bundle of (unoriented) $2$-frames with one direction in $\mathbb{R}^2$ is a manifold:
\[
	\mathbb{F}_2 := \{(x_1, x_2, x, \mu, \nu) \mid \mu \in S_{(x_1, x_2)}\mathbb{R}^2, \,\, \nu \in S_{(x_1, x_2, x)}(\mathbb{R}^2 \times M_0),\,\, \mu \perp \nu\},
\]
Observe that $\mathbb{F}_2$ can be naturally identified with the manifold of \emph{complex light rays} 
\[
	L_1 :=  \{(x_1, x_2, x, \mu + i\nu) \in T_{\mathbb{C}} (\mathbb{R}^2 \times M_0) \mid |\mu + i\nu|^2 = 0,\,\, \mu \in S_{(x_1, x_2)} \mathbb{R}^2\}, 
\]
where $T_{\mathbb{C}}(\mathbb{R}^2 \times M_0)$ is the complexification of $T(\mathbb{R}^2 \times M_0)$ and $|\bullet|^2$ is the extension to $T_{\mathbb{C}}(\mathbb{R}^2 \times M_0)$ of Riemannian norm by complex bilinearity. Note that $|\mu + i\nu|^2 = 0$ is equivalent to $\mu$ and $\nu$ being perpendicular and of same norm. We will also need
\[
	L :=  \{(x_1, x_2, x, \mu + i\nu) \in T_{\mathbb{C}} (\mathbb{R}^2 \times M_0) \mid |\mu + i\nu|^2 = 0,\,\, \mu \neq 0\} \cong \mathbb{R}_{> 0} \times L_1.
\]

We are able to determine the topology of $L_1$ (and so of $L$).

\begin{lemma}
	We have $L_1$ is a smooth $\mathbb{S}^1 \times \mathbb{S}^{n -2}$-fibre bundle. If $(M_0, g_0)$ is simple, then $L_1$ is diffeomorphic to $\mathbb{R}^2 \times M_0 \times \mathbb{S}^1 \times \mathbb{S}^{n - 2}$.
\end{lemma}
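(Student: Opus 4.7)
\medskip

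\noindent \textbf{Proof plan.} The plan is to exhibit $L_1$ as the total space of the natural projection $\pi: L_1 \to \mathbb{R}^2 \times M_0$, $(x_1, x_2, x, \mu + i\nu) \mapsto (x_1, x_2, x)$, and to compute its fibres explicitly.

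First I would observe that $|\mu + i\nu|^2 = 0$ unpacks (using complex bilinearity of the Riemannian inner product) to the two conditions $|\mu|_g = |\nu|_g$ and $g(\mu, \nu) = 0$. Combined with the constraint $\mu \in S_{(x_1,x_2)}\mathbb{R}^2$, this says: $\mu$ is a unit vector in $\mathbb{R}^2$, and $\nu$ is a unit vector in $T_{(x_1,x_2,x)}(\mathbb{R}^2 \times M_0) = \mathbb{R}^2 \oplus T_x M_0$ orthogonal to $\mu$. A standard submersion argument (the defining equations $|\mu|^2 = 1$, $|\nu|^2 = 1$, $g(\mu,\nu) = 0$ are transverse, the last one because $\nu$ has nontrivial components in $\mu^\perp$) shows that $L_1$ is a smooth submanifold of $S\mathbb{R}^2 \times_{\mathbb{R}^2 \times M_0} S(\mathbb{R}^2 \times M_0)$.

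Next I would compute the fibre of $\pi$ over a base point $p = (x_1, x_2, x)$. Writing $T_p(\mathbb{R}^2 \times M_0) = \mathbb{R}^2 \oplus T_x M_0$, the orthogonal complement $\mu^\perp$ decomposes as $\mu^{\perp,\mathbb{R}^2} \oplus T_x M_0$, which has dimension $(n-1)$. Using the standard almost complex structure $J$ on $\mathbb{R}^2$ (rotation by $\pi/2$), the assignment $\mu \mapsto J\mu$ is a smooth nowhere-vanishing section of the line bundle $\{\mu^{\perp,\mathbb{R}^2}\}_{\mu \in \mathbb{S}^1}$, so this line bundle is trivial; consequently the rank $(n-1)$ bundle $\{\mu^\perp\}_{\mu \in \mathbb{S}^1}$ over $\mathbb{S}^1$ is trivialised by the global frame $(J\mu, \partial_{y^1}, \dots, \partial_{y^{n-2}})$ (where $(y^j)$ is any basis of $T_x M_0$), and its unit sphere bundle is the trivial $\mathbb{S}^1 \times \mathbb{S}^{n-2}$.

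To conclude the first claim, local triviality of $\pi$ over $\mathbb{R}^2 \times M_0$ is obtained by choosing, around any $x_0 \in M_0$, a chart in which $TM_0$ is trivialised; the preceding fibrewise argument then runs uniformly over $\mathbb{R}^2 \times U$. For the second claim, the excerpt records that a simple $(M_0, g_0)$ is diffeomorphic to a closed ball, hence contractible, so $TM_0$ is globally trivial; applying the fibrewise construction with a global frame of $TM_0$ produces a global diffeomorphism $L_1 \cong \mathbb{R}^2 \times M_0 \times \mathbb{S}^1 \times \mathbb{S}^{n-2}$. There is no real obstacle here: the entire argument is linear-algebraic and the only thing to verify carefully is that the frame $(J\mu, \partial_{y^1}, \dots, \partial_{y^{n-2}})$ depends smoothly on both $\mu$ and the base point, which is immediate from smoothness of the chosen trivialisation of $TM_0$.
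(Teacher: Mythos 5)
Your proposal is correct and follows essentially the same route as the paper: both identify the fibre of $L_1 \to \mathbb{R}^2 \times M_0$ as the unit sphere bundle of the rank-$(n-1)$ bundle $\{\mu^\perp\}_{\mu \in \mathbb{S}^1}$, trivialise it via the rotation $\mu \mapsto \mu_\perp = J\mu$ (giving $\mathbb{S}^1 \times \mathbb{S}^{n-2}$), and use contractibility of a simple $M_0$ to globalise. The paper phrases this through the intermediate circle bundle $K$ and writes down the explicit fibre-bundle isomorphism, but the content is identical.
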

\begin{proof}
	The smoothness part follows from definition; consider the circle bundle $K := \{(x_1, x_2, x, \mu) \in \mathbb{R}^2 \times M_0 \mid \mu \in S_{(x_1, x_2)}\mathbb{R}^2\} \cong \mathbb{R}^2 \times M_0 \times \mathbb{S}^1$. There is a projection of fibre bundles $L_1 \to K$ given by $(x_1, x_2, x, \mu + i\nu) \mapsto (x_1, x_2, x, \mu)$. In the case $(M_0, g_0)$ is simple, since $\mathbb{R}^2$ and $M_0$ are contractible (the latter by the definition of being simple, as the domain of the exponential map is star-shaped), we have that $K \cong \mathbb{R}^2 \times M_0 \times \mathbb{S}^1$ and $L_1 \cong \mathbb{R}^2 \times M_0 \times F$, where $F$ is a fibre to be determined and which is a fibre bundle $F \to \mathbb{S}^1$. The vector bundle over $\mathbb{S}^1 \subset \mathbb{R}^n$ consisting of vectors orthogonal to the points of $\mathbb{S}^1$ in $\mathbb{R}^n$ is diffeomorphic to $T\mathbb{S}^1 \oplus \mathbb{R}^{n - 2} \cong \mathbb{S}^1 \times \mathbb{R}^{n - 1}$. Therefore the bundle $F \to \mathbb{S}^1$ is the trivial $\mathbb{S}^{n - 2}$ bundle and the result follows. The explicit fibre bundle isomorphism is given by
	\[
		\mathbb{R}^2 \times M_0 \times \mathbb{S}^1 \times \mathbb{S}^{n - 2} \ni (x_1, x_2, x, \mu, y) \mapsto \big(x_1, x_2, x, \mu + i (y_1 \mu_\perp + (y_2, \dotsc, y_{n - 1}))\big) \in L_1.
	\]
	where we used the identification  $T M_0 \cong M_0 \times \mathbb{R}^{n - 2}$ and wrote $\mu_\perp$ for the vector obtained by applying the oriented rotation by $\frac{\pi}{2}$ to $\mu$ in $\mathbb{R}^2$.
 
 This argument also shows that for general $(M_0, g_0)$, $L_1$ has the required fibre.
\end{proof}

There is a natural differential operator $\mathbb{X}$ on $L$ induced from the geodesic vector field $X$ of $\mathbb{R}^2 \times M_0$, that we sometimes refer to as the \emph{complexified geodesic vector field}. For a function $f \in C^\infty(L)$, set
\begin{align}\label{eq:complex-X}
\begin{split}
	\mathbb{X}(x_1, x_2, x, \mu + i \nu) &f(x_1, x_2, x, \mu + i\nu) \\
	&:= \frac{d}{dt}\Big|_{t = 0} f\big(\gamma_\mu(t), P_{\gamma_{\mu}}(t) (\mu + i\nu)\big) + i\frac{d}{dt}\Big|_{t = 0} f\big(\gamma_\nu(t), P_{\gamma_{\nu}}(t) (\mu + i\nu)\big),
\end{split}
\end{align}
where $\gamma_{\mu}$ and $\gamma_{\nu}$ are the geodesics starting at $(x_1, x_2, x)$ with speeds $\mu$ and $\nu$, respectively, and $P_{\gamma}$ denotes Levi-Civita parallel transport with respect to the curve $\gamma$. Note that this parallel transport leaves $L$ invariant since parallel transport preserves norms and angles.
\medskip

In what follows we assume $(M_0, g_0)$ is simple and turn to an auxiliary result that solves a certain matrix $\overline{\partial}$-equation leafwise, for a local family of bicharacteristic leaves parameterised from the boundary. Enlarge $(M_0, g_0)$ to another simple manifold $(N_0, g_{0})$ which is in turn embedded into a further manifold as in \S \ref{ssec:extensions}. This parameterisation is done by fixing a small neighbourhood $U$ of $\mu_0 \in \mathbb{S}^1$, a small neighbourhood of a point $p_0$ in $\partial(D \times N_0)$, and inwards pointing unit vectors $\nu$ at $(x_1, x_2, x) = p \in V$ of the slice $\{(x_1, x_2) + \mu^\perp\} \times N_0$ (see Figure \ref{fig:transversal}). The $\overline{\partial}$-equation is then solved for the bicharacteristic leaf generated by $(\mu, \nu)$ at $p$, which can be naturally identified with $\mathbb{C}$ using geodesics defined by $\mu$ and $\nu$. In what follows $D \subset \mathbb{R}^2$ denotes a closed disk centred at zero (more general domains can be taken, but for simplicity we restrict to the disk setting).

\begin{lemma}\label{lemma:parametric-del-bar}
	Let $\mu_0 \in \mathbb{S}^1 \subset \mathbb{R}^2$ and $p_0 := (x_{10}, x_{20}, x_0) \in \mc{B} := (\partial D \times N_0^\circ) \sqcup (D^\circ \times \partial N_0)$. Let $A$ be a connection with compact support in the interior of $D \times N_0$. Let $U \subset \mathbb{S}^1$ be an open set diffeomorphic to an interval, containing $\mu_0$, and let $V$ be an open neighbourhood of $p_0$ in $\mc{B}$ diffeomorphic to a disk. For $\mu \in U$, and $p \in V$, denote by $\mu^\perp \subset \mathbb{R}^2$ the orthogonal of $\mu$ and consider the set of inward pointing or tangent unit vectors $\mc{D}_{\leq 0}(\mu, p)$ of $\{(x_1, x_2) + \mu^\perp\} \times N_0$ at $p = (x_1, x_2, x)$, i.e.
    \[
        \mc{D}_{\leq 0}(\mu, p) = \{v \in T_p(\{(x_1, x_2) + \mu^\perp\} \times N) \mid |v|_{e \oplus g} = 1,\quad v\,\,\mathrm{inward\,\, pointing\,\,or\,\,tangent\,\,to\,\,} \mc{B}\}.
    \]
    This forms a trivial disk fibre bundle $\mc{D}_{\leq 0}$ over $U \times V$ (see Figure \ref{fig:transversal}). Then, there exists 
	\[
            C \in C^\infty\big(\mc{D}_{\leq 0} \times \mathbb{C}; \GL_r(\mathbb{C})\big)
        \] 
	satisfying for $(\mu, p, \nu) \in \mc{D}_{\leq 0}$
	\begin{equation}\label{eq:del-bar}
		\partial_{\bar{z}} C(\mu, p, \nu, z) + A\big((x_1, x_2) + s\mu + t\nu_\perp, \exp_p\big(t(\nu - \nu_\perp)\big)\big)(\partial_{\bar{z}}) \cdot C(\mu, p, \nu, z) = 0.
	\end{equation}
	Here $z = s + i t \in \mathbb{C}$, $\partial_{\bar{z}} = \frac{1}{2}(\partial_s + i \partial_t)$, $\nu_\perp$ is the component of $\nu$ in the $\mu^\perp$ direction, and $\exp_p$ is the exponential map of $(N_0, g_0)$.
\end{lemma}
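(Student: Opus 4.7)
The plan is, for each fixed parameter $(\mu,p,\nu) \in \mc{D}_{\leq 0}$, to rewrite the equation as a scalar Cauchy--Riemann system on $\C$ with compactly supported coefficient and apply the existence result of Nakamura--Uhlmann (Lemma \ref{lemma:del-bar-source}) to obtain $C = I + V$ with $V$ in a weighted $L^2$ space; pointwise invertibility is then forced by solving a transposed equation and applying a Liouville-type argument, and smooth parameter dependence is obtained by differentiating the defining equation and reapplying the same solvability result.

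First I would set up the leafwise pullback. Set
\[
    \Phi_{\mu,p,\nu}(s+it) := \big((x_1,x_2) + s\mu + t\nu_\perp,\ \exp_p(t(\nu-\nu_\perp))\big),
\]
where in order that the geodesic in the $N_0$-factor be defined for all $t \in \R$ I use the extension of $(N_0,g_0)$ to a complete ambient manifold from \S \ref{ssec:extensions} and extend $A$ by zero. Let $\tilde A(z;\mu,p,\nu) := A\big(\Phi_{\mu,p,\nu}(z)\big)\big(d\Phi_{\mu,p,\nu}(\partial_{\bar z})\big)$; since $A$ has compact support in $D \times N_0$ and the map $(s,t) \mapsto (x_1,x_2)+s\mu+t\nu_\perp$ is a proper linear isomorphism of $\R^2$ (because $\mu \neq 0$ and $\mu \perp \nu_\perp$), the pullback $\tilde A$ is smooth, compactly supported in $\C$, and depends smoothly on $(\mu,p,\nu)$.

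Now write $C = I + V$, so the equation becomes $\partial_{\bar z} V + \tilde A V = -\tilde A$, which is a Nakamura--Uhlmann type equation with compactly supported right-hand side. Choosing $\tau < 0$ with $|\tau|$ large enough as provided by Lemma \ref{lemma:del-bar-source}, I obtain a unique $V \in L^2_\tau(\C,\C^{r\times r})$ with $\|V\|_\tau \leq C\|\tilde A\|_{\tau+2}$, and interior elliptic regularity for $\partial_{\bar z}$ bootstraps $V$ to $C^\infty(\C)$. To upgrade to pointwise invertibility, I apply the transpose of Lemma \ref{lemma:del-bar-source} (with $-\tilde A^T$ in place of $A$) to construct $W^T \in L^2_\tau$ with $\partial_{\bar z} W^T - \tilde A^T W^T = \tilde A^T$, so that $D := I + W$ satisfies $\partial_{\bar z} D = D \tilde A$. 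Then $\partial_{\bar z}(DC) = (D\tilde A)C - D(\tilde A C) = 0$, i.e.\ $DC$ is entire. Outside $\supp \tilde A$ both $V$ and $W$ satisfy homogeneous Cauchy--Riemann equations, so representing them via the Cauchy transform of their compactly supported source terms (namely $-\tilde A(I+V)$ and $\tilde A^T(I+W^T)$, respectively) forces $V(z), W(z) = O(|z|^{-1})$ as $|z|\to\infty$. Hence $DC - I \to 0$ at infinity, and by Liouville $DC \equiv I$; thus $C(z) \in \GL_r(\C)$ with $C^{-1} = D$.

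For the smooth dependence on $(\mu,p,\nu)$ --- which I expect to be the main technical obstacle --- I formally differentiate the defining equation in a parameter direction $\alpha$ to obtain
\[
    \partial_{\bar z}(\partial_\alpha V) + \tilde A (\partial_\alpha V) = -(\partial_\alpha \tilde A)(I + V),
\]
whose right-hand side is smooth, compactly supported, and depends continuously on parameters thanks to the a priori bound $\|V\|_\tau \leq C\|\tilde A\|_{\tau+2}$. Justifying the interchange via difference quotients and the uniqueness statement of Lemma \ref{lemma:del-bar-source} identifies $\partial_\alpha V$ as the unique $L^2_\tau$ solution of this equation; iterating this argument yields all higher parameter derivatives with matching estimates. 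Combined with the smoothness in $z$ from elliptic regularity, this upgrades $C$ to an element of $C^\infty\big(\mc{D}_{\leq 0} \times \C; \GL_r(\C)\big)$, completing the proof.
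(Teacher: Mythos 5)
Your route is genuinely different from the paper's. The paper first invokes Nakamura--Uhlmann's Lemma 5.8 to obtain smooth \emph{invertible} solutions only \emph{locally} in the parameter $(\mu,p,\nu)$, and then glues them: the transition functions $C_i^{-1}C_j$ take values in the group $\mc{G}$ of entire invertible matrices, so a global solution amounts to a section of a $\mc{G}$-bundle over the contractible base, which exists by Steenrod. Your idea of producing one canonical global solution $C = I + V$ normalised at infinity would, if it worked, make the gluing step unnecessary. However, there is a genuine gap in your invertibility argument. The Liouville step needs $V(z), W(z) \to 0$ as $|z|\to\infty$, and you deduce this from the representation $V = \Pi\big(-\widetilde A(I+V)\big)$ with $\Pi$ the Cauchy transform. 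That representation is not automatic: what you actually know is that $V - \Pi(\partial_{\bar z}V)$ is entire and lies, up to an $O(|z|^{-1})$ term, in $L^2_\tau$ with $\tau$ \emph{negative and $|\tau|$ large}, and for such $\tau$ the space $L^2_\tau(\mathbb{R}^2)$ contains every polynomial of degree less than $-\tau-1$. So a priori $V$ differs from the Cauchy transform of its source by a nonzero polynomial matrix, the decay fails, and Liouville only gives that $DC$ is a polynomial matrix, which may be singular somewhere. Closing this gap requires opening up the Nakamura--Uhlmann solution operator (or reconstructing the solution in a space of genuinely decaying functions) --- which is essentially the content of their Lemma 5.8, the result the paper cites instead.

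Two smaller points. First, your justification that $\widetilde A$ is compactly supported in $\mathbb{C}$ via ``$(s,t)\mapsto (x_1,x_2)+s\mu+t\nu_\perp$ is a proper linear isomorphism'' breaks down when $\nu_\perp=0$, i.e. when $\nu$ is tangent to the $N_0$ factor (which occurs in $\mc{D}_{\leq 0}$); there, properness in $t$ must come from the fact that the geodesic $t\mapsto\exp_p(t\nu)$ exits the simple manifold $N_0$ in finite time and never returns, using the extension of \S\ref{ssec:extensions}. The conclusion is right but needs this geometric input. Second, the parameter-smoothness argument by differentiating the equation and invoking the uniqueness in Lemma \ref{lemma:del-bar-source} is reasonable in spirit, but it rests on the same normalised global solution whose invertibility is the unresolved point above; the paper's local-plus-gluing structure exists precisely to avoid having to establish a canonical decaying invertible solution in one stroke.
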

	Here we use the convention that $\exp_p\big(t(\nu - \nu_\perp)\big)$ is defined outside of the domain of $\exp_p$ using the extended geodesic flow. We note that in the somewhat standard notation within the area of geometric inverse problems, we could write $ \mc{D}_{\leq 0}(\mu, p) = \partial_- S_p(\{(x_1, x_2) + \mu^\perp\} \times N_0)$, also noting that we include the boundary tangent vectors.

 \begin{center}
\begin{figure}[htbp!]
\includegraphics[scale=1]{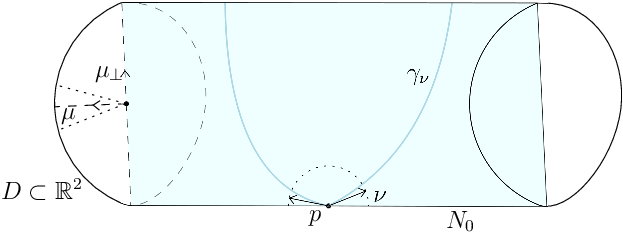}
\caption{\label{fig:transversal} Visualisation of $\mc{D}_{\leq 0}(\mu, p)$ when $(M_0, g_0)$ is the interval with Euclidean metric; in shaded blue is the manifold orthogonal to $\mu$, and in dotted lines is the set of possible vectors in the neighbourhood $U \times V$; $\gamma_\nu$ is the geodesic generated by $\nu$.}
\end{figure}
\end{center}

\begin{proof}
	We will use the results of Nakamura-Uhlmann \cite[Section 5]{nakamura-uhlmann-02}.
	\medskip
	
	\emph{Step 1: local solution.} Near any $(\mu, p, \nu) \in \mc{D}_{\leq 0}$ a smooth invertible solution $C$ exists by \cite[Lemma 5.8]{nakamura-uhlmann-02}. In the mentioned reference the authors deal with a one-dimensional space of parameters, but the proof works more generally for any finite number of dimensions. 
	\medskip
	
	\emph{Step 2: gluing local solutions.} We follow a general construction similar to \cite[page 335-336]{nakamura-uhlmann-02}. By slightly enlarging the parameter space to $\mc{D}$ to contain all nearly tangential directions and slightly enlarging the disks $U$ and $V$, by compactness we may assume there is a finite cover by open sets $\mc{U}_{i = 1}^k$ of $\mc{D}_{\leq 0}$ and smooth invertible solutions $C_i$ of \eqref{eq:del-bar} over $\mc{U}_i$. For any $i$ and $j$, $C_{ij} := C_i^{-1} C_j$ satisfies on $\mc{U}_i \cap \mc{U}_j$
	\[
		\partial_{\bar{z}} (C_i^{-1} C_j) = -C_i^{-1} (\partial_{\bar{z}} C_i) C_i^{-1} C_j + C_i^{-1} \partial_{\bar{z}} C_j = 0,
	\] 
	where in the last equality we used \eqref{eq:del-bar} (and we dropped the parameters for simplicity). Define
	\[
		\mc{G} := \{F \in C^\infty(\mathbb{C}, \GL_r(\mathbb{C})) \mid \partial_{\bar{z}} F = 0\},
	\]
	equipped with the usual topology of $C^\infty(\mathbb{C}, \GL_r(\mathbb{C}))$ of convergence on compact sets. Using the cover by $(\mc{U}_i)_{i = 1}^k$ and the $\mc{G}$-valued transition maps $C_{ij}$, we define the $\mc{G}$-fibre bundle $\mc{F}$ over $\mc{D}$. Since $\mc{D}_{\leq 0}$ is disk bundle over a contractible space, it is contractible itself, and we may assume that so is $\mc{D}$; by \cite[Corollary 11.6]{steenrod-99} we hence know that $\mc{F}$ has a smooth section $H$. If $H_i$ denotes $H$ in the local trivialisation over $\mc{U}_i$, then on  $\mc{U}_i \cap \mc{U}_j$ by definition $H_i = C_{ij} H_j$. Define on each $\mc{U}_i$, $C := C_i H_i$, so we see that 
	\[
		C_i H_i = C_i C_{ij} H_j = C_j H_j, \quad \mathrm{on} \quad \mc{U}_i \cap \mc{U}_j.
	\]
	Therefore, $C|_{\mc{D}_{\leq 0}}$ is a well-defined smooth global solution to \eqref{eq:del-bar}, which completes the proof.
\end{proof}

Next, we turn to the main result of this section. For brevity, we will sometimes denote the points of $L$ as $\mu + i\nu$ (the coordinate $(x_1, x_2, x)$ will be clear from context). To summarise, this result reduces Theorem \ref{thm:main-theorem-simple} to a (complex) transport problem on $L$. In the proof, we use the family of solutions to the leafwise $\overline{\partial}$-equation constructed in Lemma \ref{lemma:parametric-del-bar} to construct CGOs (Step 1), and then use the information given by the assumption (equality of DN maps) and the integral (Alessandrini) identity to modify these leafwise solutions so that they by uniqueness smoothly glue together (Steps 2 and 3) and indeed satisfy a complex transport equation on $L$.

\begin{theorem}\label{thm:reduction-to-complex-x-ray-connection}
    	Let $A_1, A_2$ and $Q_1, Q_2$ be as in Theorem \ref{thm:main-theorem-simple} and assume they are in gauges as in Proposition \ref{proposition:boundary-determination}. There exists a smooth invertible section $G \in C^\infty(L_1|_{D \times M_0}, \GL_r(\mathbb{C}))$ satisfying for all  $(x_1, x_2, x, \mu + i \nu) \in L_1|_{D \times M_0}$
    	\begin{equation}\label{eq:homomorphism-connection-complex-transport}
    		\mathbb{X}(\mu + i\nu) G(\mu + i\nu) + A_1(x_1, x_2, x)(\mu + i\nu) G(\mu + i\nu) - G(\mu + i\nu) A_2(x_1, x_2, x)(\mu + i\nu) = 0.
    	\end{equation}
    	Moreover, we have $G \equiv \id$ for base points in the connected component of $\partial (D \times M_0)$ inside $D \times M_0 \setminus M$.
	
	If we extend $G$ by $0$-homogeneity to $L$, then it satisfies \eqref{eq:homomorphism-connection-complex-transport} for $(x_1, x_2, x, \mu + i \nu) \in L|_{D \times M_0}$.
\end{theorem}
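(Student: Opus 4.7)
The plan is to apply Lemma \ref{lemma:parametric-del-bar} locally to obtain smooth parametric families of leafwise $\overline{\partial}$-solutions near the boundary, upgrade them to CGO solutions following \S \ref{ssec:CGO}, insert these into the Alessandrini identity of Proposition \ref{prop:integral-identity}, and extract from the $h \to 0$ limit exactly the information needed to glue the local $\overline{\partial}$-solutions into a single smooth invertible $G$ on $L_1|_{D \times M_0}$ satisfying \eqref{eq:homomorphism-connection-complex-transport}. Throughout, the coefficients $A_1, A_2, Q_1, Q_2$ are extended to $D \times N_0$ and put in the boundary gauge of Proposition \ref{proposition:boundary-determination} so that they agree outside $M$ and are compactly supported.

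\textbf{Step 1 (local CGOs).} Cover the boundary parameter space $\mathbb{S}^1 \times \partial(D \times N_0)$ by finitely many charts on which Lemma \ref{lemma:parametric-del-bar} applies, obtaining smooth invertible leafwise $\overline{\partial}$-solutions $C_1^{(\alpha)}$ for the connection $A_1$ and $C_2^{(\alpha)}$ for $-A_2^*$. Following \S \ref{ssec:CGO} with the two-parameter family of linear LCWs $\varphi = (x_1, x_2)\cdot \mu$ (the $\nu$-direction playing the role of the transversal variable), build CGO pairs
\[
    U_1 = e^{-\rho/h}\bigl(|g|^{-1/4} C_1^{(\alpha)} + R_1\bigr), \quad U_2 = e^{\bar\rho/h}\bigl(|g|^{-1/4} C_2^{(\alpha)} + R_2\bigr),
\]
solving $\Lie_{A_1, Q_1} U_1 = 0$ and $\Lie_{-A_2^*, Q_2^*} U_2 = 0$ respectively, with leaf-adapted complex weight $\rho = \varphi + i\psi$ (the imaginary part $\psi$ encoding arc length along $\gamma_\nu$) and remainders $\|R_i\|_{L^2} = \mc{O}(h)$ as $h \to 0$ (compare \cite[Theorem 3.5]{cekic-17}).

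\textbf{Step 2 (limiting identity and gluing).} Setting $F_i := U_i|_{\partial M}$ and plugging into Proposition \ref{prop:integral-identity}, the left-hand side vanishes by the hypothesis $\Lambda_{A_1, Q_1} = \Lambda_{A_2, Q_2}$. The conjugate weights $e^{-\rho/h}$ and $e^{\bar\rho/h}$ combine in the right-hand side to cancel the exponential factors, leaving a stationary-phase integral concentrating on the bicharacteristic leaf; passing to $h \to 0$ and using the remainder estimates produces a limiting integral identity along each leaf that is linear in $A_1 - A_2$, $Q_1 - Q_2$ and bilinear in $C_1^{(\alpha)}, C_2^{(\alpha)}$. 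Varying the boundary traces of the $C_i^{(\alpha)}$, this identity shows that the natural algebraic combination $G^{(\alpha)} := C_1^{(\alpha)} (\widetilde C_2^{(\alpha)})^{-1}$ --- where $\widetilde C_2^{(\alpha)}$ is the leafwise $\overline{\partial}$-solution for $A_2$ itself, obtained from $C_2^{(\alpha)}$ by a conjugate-transpose-inverse operation together with a reflection of the $\nu$-parameter --- is well-defined across overlapping charts modulo a leafwise-holomorphic cocycle. Trivialising that cocycle exactly as in Step 2 of the proof of Lemma \ref{lemma:parametric-del-bar} (contractibility of the parameter space together with \cite[Corollary 11.6]{steenrod-99}) yields a single smooth invertible $G$ on $L_1|_{D \times M_0}$.

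\textbf{Step 3 (transport equation, boundary values, extension, and main obstacle).} The $\overline{\partial}$-equations $\partial_{\bar z} C_1^{(\alpha)} + \tfrac{1}{2} A_1(\mu + i\nu) C_1^{(\alpha)} = 0$ and $\partial_{\bar z} \widetilde C_2^{(\alpha)} + \tfrac{1}{2} A_2(\mu + i\nu) \widetilde C_2^{(\alpha)} = 0$, combined via the Leibniz rule applied to $G = C_1^{(\alpha)}(\widetilde C_2^{(\alpha)})^{-1}$, give
\[
    2 \partial_{\bar z} G + A_1(\mu + i\nu) G - G\, A_2(\mu + i\nu) = 0.
\]
Since $\mathbb{X}(\mu + i\nu)$ acts along the leaf as $\partial_s + i \partial_t = 2 \partial_{\bar z}$ by the very definition \eqref{eq:complex-X}, this is precisely \eqref{eq:homomorphism-connection-complex-transport}. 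The initial condition $C_i^{(\alpha)} = \id$ at the starting points on $\partial(D \times N_0)$, combined with the fact that $A_1 = A_2$, $Q_1 = Q_2$ outside $M$ (so that on leaf segments in the exterior the transport equation reduces to the commutator equation, which preserves $G = \id$), propagates $G \equiv \id$ to the stipulated connected component of $\partial(D \times M_0)$; simplicity of $(N_0, g_0)$ and the Euclidean structure on $D$ guarantee that every leaf of $L_1|_{D \times M_0}$ meets $\partial(D \times N_0)$, so ODE well-posedness along leaves extends $G$ smoothly throughout $L_1|_{D \times M_0}$. Finally, since $\mathbb{X}(\mu + i\nu)$ and $A_j(\mu + i\nu)$ are both $1$-homogeneous in $(\mu, \nu)$, the transport equation is scale-invariant and so the $0$-homogeneous extension to $L$ automatically satisfies \eqref{eq:homomorphism-connection-complex-transport}. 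The main obstacle is Step 2: carrying out the $h \to 0$ limit precisely in the matrix-valued, two-transverse-dimensional setting and identifying the limit explicitly enough to trivialise the holomorphic cocycle --- as emphasised in the introduction, the exponential ansatz that bypasses this for $r = 1$ is unavailable for $r \geq 2$, making the smooth parameterisation of Lemma \ref{lemma:parametric-del-bar} indispensable.
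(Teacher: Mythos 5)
Your overall architecture (local parametric $\overline{\partial}$-solutions from Lemma \ref{lemma:parametric-del-bar}, CGOs as in \S \ref{ssec:CGO}, the Alessandrini identity of Proposition \ref{prop:integral-identity}, $h \to 0$, then glue) matches the paper, and your algebra in Step 3 is right: with $\widetilde C_2 := C_2^{-*}$ one has $G := C_1 \widetilde C_2^{-1} = C_1 C_2^*$ solving $2\partial_{\bar z}G + A_1(\mu+i\nu)G - G A_2(\mu+i\nu) = 0$ leafwise. But there is a genuine gap exactly where you flag "the main obstacle", and it is not a stationary-phase technicality. The bare product $C_1 C_2^*$ satisfies the transport equation for \emph{any} choice of $C_1, C_2$, with or without equality of DN maps; the entire content of the theorem is the normalisation $G \equiv \id$ outside $M$, and your proposal never produces it. The $h \to 0$ limit must be pushed to a concrete conclusion: after concentrating in the transversal angle ($b_i(\theta)\,d\theta \to \delta(\nu)$) and inserting arbitrary constant and then arbitrary holomorphic factors $H$, one obtains the vanishing of \emph{all holomorphic moments} $\int_{\partial B(\nu)} H\, C_2^* C_1\, dz = 0$, using $\partial_{\bar z}(C_2^*C_1) = \tfrac12 C_2^*(\widetilde A_1 + i\widetilde A_r)C_1$ and Stokes. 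From this one solves a Riemann--Hilbert-type problem (Plemelj--Sokhotski, as in \eqref{eq:plemelj-sokhotski}): there is an \emph{interior} holomorphic invertible $F$ with $F^{-1}|_{\partial B(\nu)} = C_2^*C_1|_{\partial B(\nu)}$, and the correct definition is $G := C_1 F C_2^*$, which then equals $\id$ on $\partial B(\nu)$ and hence (by holomorphicity of $C_2^*C_1$ outside the supports) identically in the exterior. This insertion of $F$ is the only place the hypothesis $\Lambda_{A_1,Q_1} = \Lambda_{A_2,Q_2}$ enters, and without it the boundary condition in the theorem — which is indispensable for the Liouville argument in Lemma \ref{lemma:connection-uniqueness} — is simply not available.

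Two consequential side effects of this omission. First, your gluing mechanism is wrong: the paper does not trivialise a cocycle for $G$ (Steenrod is used only inside Lemma \ref{lemma:parametric-del-bar} to build the $C_i$); once each local $G$ is normalised to $\id$ for large $|z|$, overlapping local solutions coincide automatically by uniqueness for the leafwise $\overline{\partial}$-system (unique continuation after applying $\partial_z$). Your transition relations $G^{(\beta)} = C_1^{(\alpha)}H_1 H_2^{-1}(\widetilde C_2^{(\alpha)})^{-1}$ are two-sided twists, not a right-multiplication cocycle, so the Steenrod section argument does not even apply as stated. Second, your claim that "the initial condition $C_i^{(\alpha)} = \id$ at the starting points ... propagates $G \equiv \id$" by "ODE well-posedness along leaves" is not meaningful here: the leafwise equation is a two-dimensional elliptic $\overline{\partial}$-equation, not an ODE, the $C_i$ from Lemma \ref{lemma:parametric-del-bar} are not normalised to $\id$ anywhere, and there is no propagation from pointwise initial data. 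The extension to interior base points is instead done by pulling back along the geodesic in the $-\mu$ direction and checking, via uniqueness of leafwise solutions and the explicit shift $z_0(s)$, that this is consistent with the definition \eqref{eq:complex-X} of $\mathbb{X}$.
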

\begin{proof}
	We divide the proof into steps.
	\medskip
	
	\emph{Step 1: application of CGOs.} Fix $\mu \in \mathbb{S}^1$, $p \in \mc{B}$, and $\nu \in \mc{D}_{\leq 0}(\mu, p)$, and open sets $\mu \in U$ and $p \in V$, as in the notation of Lemma \ref{lemma:parametric-del-bar} (see Figure \ref{fig:transversal}). Without loss of generality (by applying a rotation in $\mathbb{R}^2$), we may assume that $\mu = \e_1 = (1, 0) \in \mathbb{R}^2$. Consider the transversal manifolds $N_\mu = [-T, T] \times N_0$ and $M_\mu = [-T, T] \times M_0$ for some sufficiently large $T$ (depending only on $M$). By Lemma \ref{lemma:parametric-del-bar}, there is a solution $C_1 \in C^\infty(\mc{D}_{\leq 0} \times \mathbb{C}; \GL_r(\mathbb{C}))$ to the equation \eqref{eq:del-bar}. Observe that this is the same equation as in \eqref{eq:transport-1}, that is
    \begin{equation}\label{eq:transport-1-simple}
        \partial_{\bar{z}} C_1 + A_1(\partial_{\bar{z}}) C_1 = 0.
    \end{equation}
    Thus, by the construction laid out in \S \ref{ssec:CGO}, there are smooth solutions $U_1$ to $\mc{L}_{A_1, Q_1} U_1 = 0$ on $[-T, T] \times M_\mu$ of the form
	\[
		U_1(x_1, x_2, x) = e^{-\frac{x_\mu + i r}{h}} (|g|^{-\frac{1}{4}} C_1(x_\mu, r, \theta) + R_1),
	\]
	where $x_\mu = (x_1, x_2) \cdot \mu = x_1$, $h > 0$ is a small (semiclassical) parameter, $(r, \theta)$ where $\theta \in \mc{D}_{\leq 0}(\mu, p)$ are seen as polar coordinates in $M_\mu$. Similarly, we obtain smooth solutions $U_2$ to $\mc{L}_{-A_2^*, Q_2^*} U_2 = 0$, of the form
	\[
		U_2(x_1, x_2, x) = e^{-\frac{-x_\mu + i r}{h}} (|g|^{-\frac{1}{4}} C_2(x_\mu, r, \theta) + R_2),
	\]
	where $C_2 \in C^\infty(\mc{D}_{\leq 0} \times \mathbb{C}; \GL_r(\mathbb{C}))$ satisfies \eqref{eq:transport-2} with $-A_2^*$ instead of $A$
        \begin{equation}\label{eq:transport-2-simple}
            \partial_z C_2 - A_2^*(\partial_z) C_2 = 0.
        \end{equation}
    The remainders satisfy 
    \begin{equation}\label{eq:remainder}
		\|R_i\|_{L^2} = \mc{O}(h), \quad \|R_i\|_{H^1} = \mc{O}(1), \quad h \to 0,\quad i = 1, 2,
    \end{equation}
    Write $U_i|_{\partial M} =: F_i$ for $i = 1, 2$, and use Proposition \ref{prop:integral-identity}:
	\begin{align}\label{eq:integral-identity-inside}
	\begin{split}
		0 &= \langle{(\Lambda_{A_1, Q_1} - \Lambda_{A_2, Q_2})F_1, F_2}\rangle_{L^2}\\ 
		&= \underbrace{\langle{(Q_1 - Q_2 + |A_2|^2 - |A_1|^2) U_1, U_2}\rangle_{L^2}}_{T_1} + \underbrace{\langle{U_1\cdot dU_2^* - dU_1\cdot U_2^*, A_1^* - A_2^*}\rangle_{L^2}}_{T_2}.
	\end{split}
	\end{align}
	For the first term on the right hand side we get
	\begin{align}\label{eq:T-1}
	\begin{split}
		T_1 &= \int_M \Tr\left((Q_1 - Q_2 + |A_2|^2 - |A_1|^2)U_1 U_2^*\right)\,d\vol_g\\ 
		&= \int_M \Tr\left((Q_1 - Q_2 + |A_2|^2 - |A_1|^2)(|g|^{-\frac{1}{4}}C_1 b + R_1) (|g|^{-\frac{1}{4}} C_2^* + R_2^*)\right)\, d\vol_g = \mc{O}(1)
	\end{split}
	\end{align}
	using \eqref{eq:remainder}. For the other term, we similarly compute
	\begin{align}\label{eq:T-2}
	\begin{split}
		T_2 &= \int_M \Tr \left\langle{U_1\cdot dU_2^* - dU_1\cdot U_2^*, A_1^* - A_2^*}\right\rangle_g \,d\vol_g\\
		&= \mc{O}(1) + 2h^{-1} \int_M \Tr\left\langle(|g|^{-\frac{1}{4}}C_1 + R_1)(dx_\mu + i dr)(|g|^{-\frac{1}{4}} C_2^* + R_2^*), A_1^* - A_2^*\right\rangle_g\, d\vol_g\\
		&= \mc{O}(1) - 2h^{-1}\int_M \Tr\left(C_1 C_2^* (\widetilde{A}_1 + i \widetilde{A}_r)\right)\, dx_\mu dr d \theta,
	\end{split}
	\end{align}
	where we denote $\widetilde{A} := A_2 - A_1$, in the second line we used that the terms not containing derivatives of $e^{\frac{x_\mu + ir}{h}}$ are bounded in $L^2$, in the third line we used \eqref{eq:remainder} and that $d\vol_g = |g|^{\frac{1}{2}}\, dx_\mu dr d\theta$. Therefore
	\[
		0 = \lim_{h \to 0} h(T_1 + T_2) = - 2\int_M \Tr\left(C_1 C_2^* (\widetilde{A}_1 + i \widetilde{A}_r)\right)\, dx_\mu dr d \theta.
	\]
	
	By replacing $C_1$ with $C_1 b$, where $b$ is only a function of the polar coordinate $\theta$ (this is allowed as $C_1 b$ also satisfies \eqref{eq:transport-1}), and further taking a sequence of smooth functions $(b_i(\theta))_{i = 1}^\infty$ such that $b_i(\theta) d\theta \to \delta(\nu)$ as a distribution, where $\delta(\nu)$ denotes the Dirac delta distribution at $\nu$ (note here that the integral can also be interpreted over $D \times N_0$ as $\supp(\widetilde{A}) \subset M$), we get
	\begin{equation}\label{eq:CGO-limit-inside-connections}	
		0 = \int_{B(\nu)} \Tr\left(C_1 C_2^* (\widetilde{A}_1 + i \widetilde{A}_r)\right) dx_\mu dr,
	\end{equation}
	where $B(\nu)$ denotes the bicharacteristic leaf determined by $(\mu, p, \nu)$; more explicitly 
	\begin{equation}\label{eq:leaf-explicit}
		B(\nu) = \left\{\big((y_1, y_2) + s\mu + t\nu_\perp, \exp_p(t(\nu - \nu_\perp)\big) \mid |s|, |t| \leq T\right\},
	\end{equation}
	where $p = (y_1, y_2, y)$, $\nu_\perp$ is the component of $\nu$ in direction of $\mu^\perp$, and $T > 0$ is large enough depending on the diameter of $D \times N_0$.
 
 Since we may replace $C_1$ by $C_1 c$, where $c$ denotes an arbitrary constant coefficient matrix, we get
	\[	
		0 = \int_{B(\nu)} C_1 C_2^* (\widetilde{A}_1 + i \widetilde{A}_r)\, dx_\mu dr.
	\]
	Finally, since we may replace $C_1$ by $C_1 H$ where $H = H(x_\mu, r)$ is an arbitrary holomorphic matrix that does not depend on $\theta$ (as $C_1 H$ also satisfies \eqref{eq:transport-1-simple}), we get
	\begin{equation}\label{eq:CGO-end-identity}
		0 = \int_{B(\nu)} H C_2^* (\widetilde{A}_1 + i \widetilde{A}_r)C_1 \, dx_\mu dr = -i\int_{\partial B(\nu)} HC_2^*C_1\, dz,
	\end{equation}
	where in the second equality we denoted $z = x_\mu + ir$, we used Stokes' theorem and 
	\begin{equation}\label{eq:C_2^*C_1}
		\partial_{\bar{z}} (C_2^* C_1) = \frac{1}{2} C_2^*(\widetilde{A}_1 + i \widetilde{A}_r) C_1,
	\end{equation}
 which follows from \eqref{eq:transport-1-simple} and \eqref{eq:transport-2-simple}.
	\medskip
	
	\emph{Step 2: gluing the solutions.} By \cite[Lemmas 4.6 and 4.7]{cekic-17} we deduce from \eqref{eq:CGO-end-identity} that there is a holomorphic, invertible matrix function $F$ on the interior of $B(\nu)$, continuous up to the boundary, such that $F^{-1}|_{\partial B(\nu)} = C_2^*C_1|_{\partial B(\nu)}$. In fact, by the Plemelj-Sokhotski formula we have
	\begin{equation}\label{eq:plemelj-sokhotski}
		F^{-1}(\mu, p, \nu, z) = \frac{1}{2\pi i} \int_{\partial B(\mu, p, \nu)} \frac{C_2^*(\mu, p, \nu, \zeta) C_1(\mu, p, \nu, \zeta)}{\zeta - z}\, d\zeta, \quad z \in B(\nu)^\circ.
	\end{equation}
	Here and in what follows, we make an identification of the bicharacteristic leaf defined by $(\mu, p, \nu)$ with a subset of $\mathbb{C}$ using the coordinates in \eqref{eq:leaf-explicit}. From this formula, and since $\partial B(\nu)$, $C_1$, and $C_2$ vary smoothly in $\nu$, we deduce that $F^{-1}(\mu, p, \nu, z)$ also depends smoothly on $(\mu, p, \nu) \in \mc{D}_{\leq 0}$ and $z$ in some large disk in $\mathbb{C}$.
	 
Setting $G := C_1 F C_2^*$ we see that
	\begin{equation}\label{eq:G-equation}
		\partial_{\bar{z}} G + A_1 (\partial_{\bar{z}})G - GA_2(\partial_{\bar{z}}) = 0, \quad G|_{\partial B(\nu)} = \id, \quad z \in B(\nu)^\circ.
	\end{equation}
	Since $C_2^*C_1$ is in fact holomorphic in $\mathbb{C} \setminus B(\nu)$ by \eqref{eq:CGO-end-identity} (and so in particular near $\partial B(\nu)$), we may shift the contour in \eqref{eq:plemelj-sokhotski} and so by the preceding equation (and since $A_i$ are compactly supported inside the interior of $D \times N$) get that $G$ extends holomorphically past $\partial B(\nu)$ to $\mathbb{C}$. Since $G = \id$ on $\partial B(\nu)$ we conclude that $G \equiv \id$ in the exterior of $B(\nu)$; moreover, by uniqueness of solutions to \eqref{eq:G-equation} we have $G \equiv \id$ on the connected component of $\partial B(\nu)$ for base points in the complement of $M$ (as $\widetilde{A} \equiv 0$ there).
	
	By possibly shrinking $U$ and $V$ introduced in Step 1, we have thus obtained a pointwise invertible smooth solution $G_{U, V} := G \in C^\infty(\mc{D}_{\leq 0} \times \mathbb{C}, \GL_r(\mathbb{C}))$ satisfying \eqref{eq:G-equation} with $G \equiv \id$ in the connected component of $\partial B(\nu)$ of base points in the complement of $M$. Since $(\mu, p) \in \mathbb{S}^1 \times \mc{B}$ were arbitrary, there is a covering $(U_i\times V_i)_{i \in I}$ of $\mathbb{S}^1 \times \mc{B}$ and corresponding solutions $G_i$ of \eqref{eq:G-equation} over $\mc{D}_{\leq 0}|_{U_i \times V_i} \times \mathbb{C}$. Now observe that $G_i$ and $G_j$ agree on the intersection of their domains: indeed, since $G_j \equiv G_i \equiv \id$ for large $|z|$, by uniqueness of solutions to \eqref{eq:G-equation}, $G_i \equiv G_j$. Here uniqueness may be seen by the \emph{unique continuation principle}: applying $\partial_z$ and using $-4\partial_z \partial_{\bar{z}} = - \Delta_{\mathbb{C}}$, we are reduced to uniqueness of solutions to principally diagonal elliptic systems. By gluing together $(G_i)_{i \in I}$, we obtain $G \in C^\infty(\mc{D}_{\leq 0} \times \mathbb{C}; \GL_r(\mathbb{C}))$ satisfying \eqref{eq:G-equation}. 
	
	In fact, by repeating the same procedure as above with outwards instead of inwards pointing vectors, and again using the uniqueness argument from the previous paragraph, we obtain a smooth invertible $G \in C^\infty(\mc{D} \times \mathbb{C}; \GL_r(\mathbb{C}))$ satisfying \eqref{eq:G-equation}, where $\mc{D}(\mu, p)$ is the set of all unit vectors tangent to $M_\mu$  (technically, to obtain a smooth solution we would need to consider outwards pointing vectors together with nearly tangential ones; the fibre would still be contractible in this case which is needed for the appropriate version of Lemma \ref{lemma:parametric-del-bar}).
	 
	\medskip
	
	\emph{Step 3: extension to the interior.} Given $(x_1, x_2, x, \mu + i\nu) \in L_1|_{D \times M_0}$, let $p = ((x_1, x_2) - t \mu, x) \in \partial D \times N_0^\circ$ be the first point of intersection of the geodesic $\eta$ starting at $(x_1, x_2, x)$ in direction $-\mu$ with the boundary of $D \times N_0$. We define
	\[
		\widetilde{G}(x_1, x_2, x, \mu + i \nu) := G(\mu, p, \nu, t),
	\]
	where since $\eta$ leaves $x$ fixed we may identify $\nu$ with its parallel transport along $\eta$. Since the geodesic intersects the boundary transversely, in this way we obtain a smooth invertible map on $L_1|_{D \times M_0}$. We claim that it satisfies \eqref{eq:homomorphism-connection-complex-transport}. Indeed, writing $\mu_\perp$ for the (oriented) unit orthogonal to $\mu$, and $\nu = a\mu_\perp + v$, where $v \in T_{x}M_0$ and $a \in \mathbb{R}$, by definition we have
	\begin{align*}
		&\mathbb{X}(\mu + i\nu) \widetilde{G}(x_1, x_2, x, \mu + i\nu)\\ 
		&= \partial_s|_{s = 0} \widetilde{G}\big((x_1, x_2) + s\mu, x, \mu + i\nu\big) + i\partial_s|_{s = 0} \widetilde{G}\big((x_1, x_2) + a\mu_\perp s, \gamma(s), \mu + i(a\mu_\perp + \dot{\gamma}(s))\big)\\
		&= \partial_s|_{s = 0} G\big(\mu, p, \nu, t + s\big) + i \partial_s|_{s = 0} G\big(\mu,\underbrace{(x_1, x_2) + a\mu_\perp s - t(s) \mu, \gamma(s)}_{p(s) :=}, a \mu_\perp + \dot{\gamma}(s), t(s)\big),
	\end{align*}
	where $\gamma(s)$ denotes the geodesic starting at $x$ with speed $v$, and $t(s)$ is some time depending smoothly on $s$ such that $p(s) \in \partial D \times N_0^\circ$. Now, the bicharacteristic leaf at $p$ in direction $\mu + i\nu$ coincides with the one at $p(s)$ in direction $\mu + i(a\mu_\perp + \dot{\gamma}(s))$, so by uniqueness of solutions to \eqref{eq:G-equation}, we conclude that $G(\mu, p, \nu, z)$ agrees with $G(\mu, p(s), a\mu_\perp + \dot{\gamma}(s), z - z_0(s))$ for all $z \in \mathbb{C}$ for some fixed $z_0(s)$ depending smoothly on $s$. In fact, we have $z_0(s) = t + is - t(s)$, and so
	\begin{align*}
		\mathbb{X}(\mu + i\nu) &\widetilde{G}(x_1, x_2, x, \mu + i\nu) = \partial_s|_{s = 0} G\big(\mu, p, \nu, t + s\big) + i\partial_s|_{s = 0} G\big(\mu, p, \nu, t + is\big)\\
		&= -A_1(x_1, x_2, x) (\mu + i\nu) \widetilde{G}(x_1, x_2, x) + \widetilde{G}(x_1, x_2, x) A_2(x_1, x_2, x) (\mu + i\nu),
	\end{align*}
	where in the second line we used \eqref{eq:G-equation}. This proves the claim. 
	
	We now re-label $\widetilde{G}$ by $G$; note that by construction $G = \id$ on the connected component of $\partial (\mathbb{R}^2 \times M_0)$ in the complement of $M$.
\medskip

\emph{Step 4: extension by homogeneity.} Finally, we show the last statement of the theorem. It suffices to consider $r > 0$ and $\mu + i\nu \in L_1$, and to compute
\[
	\mathbb{X}(r(\mu + i\nu))G(r(\mu + i\nu)) = r \mathbb{X}(\mu + i\nu) G(\mu + i\nu),
\] 
where in the equality we used $0$-homogeneity of $G$, the definition \eqref{eq:complex-X} of $\mathbb{X}$, as well as that $\gamma_{r\mu}(t) = \gamma_\mu(rt)$ and $\gamma_{r\nu}(t) = \gamma_{\nu}(rt)$ (with the notation as in \eqref{eq:complex-X}). Then it is left to use \eqref{eq:homomorphism-connection-complex-transport} and the linearity of $A_1$ and $A_2$. This completes the proof.
\end{proof}

There are certain symmetries that the solutions $G$ from Theorem \ref{thm:reduction-to-complex-x-ray-connection} have to satisfy. In the following, we use the notation $G^{-*} := (G^{-1})^*$ for the Hermitian conjugate of the inverse of $G$.

\begin{lemma}\label{lemma:symmetries}
	For any $(x_1, x_2, x, \mu + i\nu) \in L$, we have
 \begin{enumerate}[itemsep=5pt]
	\item[1.] We have the relation
        \[
            G(x_1, x_2, x, \mu + i\nu) = G(x_1, x_2, x, -\mu - i\nu).
        \]
	\item[2.] If $\mathrm{span}(\mu, \nu) = \mathbb{R}^2$ and $\theta \in \mathbb{R}$, then
	\[
		G(x_1, x_2, x, \mu + i\nu) = G(x_1, x_2, x, e^{i\theta}(\mu + i\nu)).
	\]
        \item[3.] If in addition we know that $A_1$ and $A_2$ are unitary, then
        \[
            G(x_1, x_2, x, \mu + i\nu) = G^{-*}(x_1, x_2, x, \mu - i\nu).
        \]
\end{enumerate}
\end{lemma}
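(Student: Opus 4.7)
My overall strategy is uniform: for each claimed symmetry, construct a candidate $H$ from $G$ that embodies the symmetry, verify $H$ satisfies the same transport equation \eqref{eq:homomorphism-connection-complex-transport} as $G$ and agrees with $G$ (both equal to $\id$) at base points in the complement of $M$, and conclude $H \equiv G$ by the uniqueness argument of Step 2 of the proof of Theorem \ref{thm:reduction-to-complex-x-ray-connection} (restriction to a bicharacteristic leaf and the unique continuation principle for the resulting leafwise $\overline{\partial}$-equation).

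For part 1, take $H(x,w) := G(x,-w)$; since $\mathbb{X}(v)$, $A_1(v)$, and $A_2(v)$ are all $\C$-linear in $v$, evaluating \eqref{eq:homomorphism-connection-complex-transport} at $-w$ and dividing by $-1$ produces the identical equation for $H$. For part 2, take $H(x,w) := G(x, e^{i\theta}w)$; writing $e^{i\theta}w = \mu' + i\nu'$ with $\mu' = \cos\theta\,\mu - \sin\theta\,\nu$ and $\nu' = \sin\theta\,\mu + \cos\theta\,\nu$, the $\R$-linearity of the horizontal lifts in the geodesic flow gives
\[
\mathbb{X}(e^{i\theta}w) = X_{\mu'} + iX_{\nu'} = e^{i\theta}(X_\mu + iX_\nu) = e^{i\theta}\,\mathbb{X}(w),
\]
and by $\C$-linearity $A_i(e^{i\theta}w) = e^{i\theta}A_i(w)$; the transport equation at $e^{i\theta}w$ therefore factors by an overall $e^{i\theta}$ to yield the identical equation for $H$. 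The assumption $\spann(\mu, \nu) = \R^2$ is exactly what ensures that $e^{i\theta}w$ remains in the domain where the transport equation has been established (in the subset of $L$ covered by the $0$-homogeneity extension from $L_1$, equivalently, with real part still in $T\R^2$).

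The substance of the lemma is part 3. Set $F(x,w) := G^*(x, \bar w)$ and $\Phi(x,w) := F(x,w)\,G(x,w)$; the goal is to show $\Phi \equiv \id$. Unitarity of $A_i$ yields the pointwise identity $A_i(v)^* = -A_i(\bar v)$ for every complex tangent vector $v$, since the coefficients of $A_i$ are skew-Hermitian. Taking the matrix adjoint of the transport equation for $G$ at $(x,\bar w)$ and using the identity
\[
[\mathbb{X}(\bar w) G(\bar w)]^* = \mathbb{X}(w) F(x,w),
\]
which follows because parallel transport is $\R$-linear and so commutes with fibrewise complex conjugation while matrix adjoint converts $-i$ into $+i$, one obtains
\[
\mathbb{X}(w) F + A_2(w) F - F A_1(w) = 0.
\]
Applying the Leibniz rule to $\Phi = FG$ and substituting the transport equations for $F$ and $G$, the cross terms $FA_1G$ cancel, leaving
\[
\mathbb{X}(w) \Phi + [A_2(w), \Phi] = 0.
\]
This has the same structure as \eqref{eq:homomorphism-connection-complex-transport} with $A_1 = A_2$, clearly admits $\Phi \equiv \id$ as a solution, and satisfies $\Phi \equiv \id$ at base points in the complement of $M$ (since both $G$ and $F$ equal $\id$ there). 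The uniqueness argument then forces $\Phi \equiv \id$ everywhere, which rearranges to $G(x,w) = G^{-*}(x, \bar w)$.

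The main obstacle will be the careful bookkeeping of matrix adjoints, complex conjugation, and the operator $\mathbb{X}$ in part 3, in particular the verification of $[\mathbb{X}(\bar w) G(\bar w)]^* = \mathbb{X}(w) F(x,w)$ together with its interplay with $A_i(v)^* = -A_i(\bar v)$; once these algebraic identities are in hand, the argument reduces cleanly to the uniqueness machinery already developed for Theorem \ref{thm:reduction-to-complex-x-ray-connection}.
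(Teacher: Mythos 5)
Your proposal is correct and follows essentially the same route as the paper: each identity is reduced to the uniqueness of solutions of the leafwise $\overline{\partial}$-equation, using linearity of $\mathbb{X}$ and $A_i$ for the first two items (the rotated or negated frame generates the same bicharacteristic leaf up to a linear change of the $z$-coordinate) and the skew-Hermitian identity $A_i(v)^* = -A_i(\bar v)$ for the third. The only cosmetic difference is in item 3, where you show the product $G^*(\bar w)G(w)$ solves a homogeneous commutator equation and hence equals $\id$, whereas the paper verifies directly that $G^{-*}(\mu - i\nu)$ satisfies the same equation as $G(\mu+i\nu)$; the two computations are equivalent.
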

\begin{proof}
	It suffices to consider $\mu + i\nu \in L_1$. For the first identity, we observe that both $\mu + i\nu$ and $-\mu - i\nu$ generate the same bicharacteristic leaf (see \eqref{eq:leaf-explicit}); then changing the $2$-frame corresponds to a linear change of coordinates in $\mathbb{C}$, so the claim follows by uniqueness of solutions to \eqref{eq:G-equation} (and the fact that $A_1$ and $A_2$ act linearly).
	
	The second equality follows again similarly, using that $e^{i\theta}(\mu + i\nu)$ generates the same bicharacteristic leaf as $\mu + i\nu$, and a linear change of coordinates $z \mapsto e^{i\theta} z$ in that leaf. 

    To see the final identity, observe that since $G(\mu - i \nu)$ satisfies along the bicharacteristic leaf \eqref{eq:homomorphism-connection-complex-transport} with $\partial_z$ instead of $\partial_{\bar{z}}$, we simply compute that $G^{-*}$ (using that $A_i^* = -A_i$ for $i = 1, 2$) satisfies the same equation as $G$; so again by uniqueness of solutions, the identity follows.
\end{proof}

We now give the analogue of Theorem \ref{thm:reduction-to-complex-x-ray-connection} for the case of potentials.

\begin{theorem}\label{thm:reduction-to-complex-x-ray-potential}
	Assume we are in the setting of Theorem \ref{thm:main-theorem-simple} with the additional assumption that $A_1 = A_2 =: A$. Then, there exists $F \in C^\infty(L, \mathbb{C}^{r \times r})$ satisfying
 	\begin{equation}\label{eq:endomorphism-potential-complex-transport}
 		\mathbb{X}(\mu + i\nu) F(\mu + i\nu) + [A(\mu + i\nu), F(\mu + i\nu)] = Q_1 - Q_2,
 	\end{equation}
 	and moreover $F \equiv 0$ for base points in the connected component of $\partial (\mathbb{R}^2 \times M_0)$ inside $\mathbb{R}^2 \times M_0 \setminus M$.
\end{theorem}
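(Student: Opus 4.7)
The plan is to follow closely the structure of the proof of Theorem \ref{thm:reduction-to-complex-x-ray-connection}, adapting the key constructions to the potential setting. With $A_1 = A_2 =: A$, the CGOs $U_1, U_2$ from that proof can be used with the same leafwise factors $C_1, C_2$, now defined with respect to the common connection $A$. In the integral identity of Proposition \ref{prop:integral-identity}, the term involving $A_1^* - A_2^*$ vanishes identically and the $|A_i|^2$ contributions cancel, so only $\langle (Q_1 - Q_2) U_1, U_2 \rangle_{L^2}$ survives. Crucially, the oscillating exponentials in $U_1 U_2^*$ cancel exactly, so no $h^{-1}$ rescaling as in \eqref{eq:T-2} is needed. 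Passing to $h \to 0$, localizing in the polar angle $\theta$, and inserting an arbitrary holomorphic multiplier via the substitution $C_1 \mapsto C_1 c H$ yields the moment condition
\[
\int_{B(\nu)} H \, C_2^* (Q_1 - Q_2) C_1 \, dx_\mu \, dr = 0
\]
for all matrix-valued $H$ holomorphic on a neighborhood of $\overline{B(\nu)}$.

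On each leaf $B(\nu)$ I would then construct a particular solution $P \in C^\infty(B(\nu), \mathbb{C}^{r \times r})$ of the leafwise equation $\partial_{\bar z} P + [A(\partial_{\bar z}), P] = \tfrac{1}{2}(Q_1 - Q_2)$, where the factor of $\tfrac{1}{2}$ reflects the identity $\mathbb{X} = 2 \partial_{\bar z}$ in leaf coordinates. Writing $P = C_1 \Psi C_1^{-1}$ reduces this to $\partial_{\bar z} \Psi = \tfrac{1}{2} C_1^{-1}(Q_1 - Q_2) C_1$, which is solvable via the Cauchy transform since $Q_1 - Q_2$ is compactly supported. Using the defining equations of $C_1$ and $C_2$, one computes $\partial_{\bar z}(C_2^* P C_1) = \tfrac{1}{2} C_2^*(Q_1 - Q_2) C_1$, so Stokes' theorem converts the moment identity into
\[
\int_{\partial B(\nu)} H \, C_2^* P C_1 \, dz = 0
\]
for all holomorphic $H$. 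By the Plemelj--Sokhotski/Cauchy integral argument from Step 2 of Theorem \ref{thm:reduction-to-complex-x-ray-connection}, $C_2^* P C_1|_{\partial B(\nu)}$ extends to a holomorphic matrix function $\Phi$ on $B(\nu)^\circ$. Setting
\[
F_{\mathrm{leaf}} := P - C_2^{-*} \Phi C_1^{-1},
\]
a short calculation using $\partial_{\bar z}(C_2^{-*}) = -A(\partial_{\bar z}) C_2^{-*}$, $\partial_{\bar z}(C_1^{-1}) = C_1^{-1} A(\partial_{\bar z})$, and $\partial_{\bar z} \Phi = 0$ shows that $C_2^{-*} \Phi C_1^{-1}$ solves the homogeneous version of the leafwise equation, hence $F_{\mathrm{leaf}}$ satisfies the required inhomogeneous equation and, by construction, $F_{\mathrm{leaf}}|_{\partial B(\nu)} = 0$.

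The remainder of the proof mirrors Steps 2--4 of Theorem \ref{thm:reduction-to-complex-x-ray-connection}: smoothness of $P$ (via the Cauchy transform) and of $\Phi$ (via the Cauchy integral formula) gives smooth parameter dependence of $F_{\mathrm{leaf}}$ on $(\mu, p, \nu) \in \mc{D}$; uniqueness for solutions of the homogeneous leafwise equation with matching boundary data, obtained by applying $\partial_z$ to reduce to a principally diagonal elliptic system and invoking unique continuation, allows gluing the local sections into a global smooth $F$ over $\mc{D}$; transfer from $\mc{D}$ to $L_1|_{D \times M_0}$ uses the boundary parameterization, and extension to $L$ is by $(-1)$-homogeneity in $(\mu + i\nu)$ (rather than the $0$-homogeneity of the connection case), which is the correct scaling for \eqref{eq:endomorphism-potential-complex-transport} because $\mathbb{X}$ and $A$ are linear in $(\mu + i\nu)$ while $Q_1 - Q_2$ is independent of direction. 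The vanishing $F \equiv 0$ on the specified component of $\mathbb{R}^2 \times M_0 \setminus M$ follows since there $F_{\mathrm{leaf}}$ satisfies the homogeneous equation with zero boundary data along the relevant arc, so $C_1^{-1} F_{\mathrm{leaf}} C_1$ is a holomorphic matrix vanishing on a non-trivial arc and hence identically zero by the identity principle. I expect the main technical obstacle to be ensuring uniform smooth parameter dependence of the leafwise constructions, handled by the explicit integral formulas together with the uniqueness argument just described.
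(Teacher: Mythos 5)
Your proposal is correct and follows essentially the same route as the paper: CGOs with cancelling exponentials, the $h\to 0$ limit and angular localization to get the leafwise moment condition, a particular solution via the Cauchy transform conjugated by the $\overline{\partial}$-integrating factors, Plemelj--Sokhotski to subtract the holomorphic part, gluing by uniqueness, and extension by $(-1)$-homogeneity. The only differences are cosmetic — the paper takes $C_2 := C_1^{-*}$ and conjugates by $C_1$ alone (your $C_2^{-*}$ solves the same equation as $C_1$, so the two bookkeepings differ by a holomorphic conjugation absorbed into $\Phi$), and you are in fact more careful than the paper about the factor of $\tfrac12$ coming from $\mathbb{X}=2\partial_{\bar z}$.
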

\begin{proof}
	Since the proof is similar to the proof of Theorem \ref{thm:reduction-to-complex-x-ray-connection}, we will use the same notation and only highlight the differences.
	
	\medskip
	\emph{Step 1: application of CGOs.} In this situation we may take $C_1 =: C$ as before and notice that the transport equation \eqref{eq:transport-2-simple} is then satisfied by $C_2 = C^{-*}$. In fact, let $H = H(x_\mu, r)$ be an arbitrary holomorphic matrix and use $C H$ for the solution of the transport equation \eqref{eq:transport-1-simple} (but keep $C_2 = C^{-*}$). Then the construction of CGOs is the same; we plug those inside Proposition \ref{prop:integral-identity}, which now simplifies and gives
	\begin{align}\label{eq:integral-identity-inside-potentials}
	\begin{split}
		0 &= \langle{(\Lambda_{A, Q_1} - \Lambda_{A, Q_2}) F_1, F_2}\rangle_{L^2} = \langle{(Q_1 - Q_2) U_1, U_2}\rangle_{L^2}\\
		&= \int_M \Tr\left((Q_1 - Q_2)(|g|^{-\frac{1}{4}}CH + R_1) (|g|^{-\frac{1}{4}} C^{-1} + R_2^*)\right)\, d\vol_g\\
		&= \mc{O}(h) + \int_M \Tr\left((Q_1 - Q_2)CHC^{-1}\right) \,dx_\mu dr d\theta,
	\end{split}
	\end{align}
	where in the last line we used \eqref{eq:remainder}, and wrote $d\vol_g = |g|^{\frac{1}{2}} dx_\mu dr d\theta$. Letting $h \to 0$ the first term in the last line of \eqref{eq:integral-identity-inside-potentials} vanishes. Then, taking $Hc$ instead of $H$ for an arbitrary constant matrix $c$ we get
	\[
		\int_M C^{-1}(Q_1 - Q_2)CH \,dx_\mu dr d\theta = 0.
	\]
	Changing $H$ to $Hb_i$ where $(b_i(\theta))_{i = 1}^\infty$ is a sequence of smooth functions such that $b_i(\theta) d\theta \to \delta(\nu)$, we get 
		\begin{equation}\label{eq:potentials-identity-plane}
			\int_{B(\nu)} C^{-1}(Q_1 - Q_2)CH \,dx_\mu dr = 0.
		\end{equation}
		Let us now consider the solution $E \in C^\infty(\mathbb{C}, \mathbb{C}^{r \times r})$ to
		\begin{equation}\label{eq:E-equation}
			\partial_{\bar{z}} E = C^{-1} (Q_1 - Q_2)C := f, \quad z \in B(\nu).
		\end{equation}
		This can be solved uniquely by applying the \emph{Cauchy operator}, i.e.
		\[
			E(z) := \Pi (f)(z) := \frac{1}{2\pi i} \int_{\mathbb{C}} \frac{f(\tau)}{\tau - z}\, d\tau d\bar{\tau},
		\]
		  or alternatively by applying Lemma \ref{lemma:del-bar-source} directly. In fact, since $f = C^{-1} (Q_1 - Q_2) C$ depends locally smoothly on $(\mu, p, \nu)$, so does $E(z)$, and we obtain $E \in C^\infty(\mc{D}_{\leq 0} \times \mathbb{C}, \mathbb{C}^{r \times r})$.
		
		Therefore, integrating by parts in \eqref{eq:potentials-identity-plane} we get for every holomorphic matrix $H$
		\begin{equation}\label{eq:potentials-identity-plane-final}
			\int_{B(\nu)} E H \,dz = 0.
		\end{equation}
		
		\medskip
		
		\emph{Step 2: gluing the solutions.} We will first modify and then glue together $E \in C^\infty(\mc{D}_{\leq 0} \times \mathbb{C}, \mathbb{C}^{r \times r})$ constructed in the previous step. By \eqref{eq:potentials-identity-plane-final}, and the Plemelj-Sokhotski formula (as in \eqref{eq:plemelj-sokhotski}), there is a holomorphic matrix $E_0$ that agrees with $E$ restricted to the boundary $\partial B(\nu)$, given by the formula
	\begin{equation*}
		E_0(\mu, p, \nu, z) = \frac{1}{2\pi i} \int_{\partial B(\mu, p, \nu)} \frac{E(\mu, p, \nu, \zeta)}{\zeta - z}\, d\zeta, \quad z \in B(\nu)^\circ.
	\end{equation*}
	As before (see the paragraph after \eqref{eq:plemelj-sokhotski}), $E_0$ depends locally smoothly on $(\mu, p, \nu)$. Moreover, we may shift the contour in the above integral by using the holomorphicity of $E$ for large $|z|$ (follows from \eqref{eq:E-equation} and the fact that $Q_1 = Q_2$ outside $M$, so also outside $B(\nu)$), and so we get that
	\[
		E_0 \in C^\infty(\mc{D}_{\leq 0} \times \mathbb{C}, \mathbb{C}^{r \times r}), \quad (E - E_0)|_{\mathbb{C} \setminus B(\nu)} = 0.
	\]
	We set $F := C(E - E_0)C^{-1}$, which then satisfies
	\[
		\partial_{\bar{z}} F = \partial_{\bar{z}}C \cdot (E - E_0)C^{-1} + C \partial_{\bar{z}}E \cdot C^{-1} - C (E - E_0)C^{-1} \partial_{\bar{z}}C \cdot C^{-1} = [F, A(\partial_{\bar{z}})] + Q_1 - Q_2. 
	\]	
	where in the first equality we used $\partial_{\bar{z}} E_0 = 0$, and in the second equality we used \eqref{eq:transport-1-simple} as well as \eqref{eq:E-equation}. Equivalently,
	\begin{equation}\label{eq:F-equation}
		(\partial_{\bar{z}} + [A(\partial_{\bar{z}}), \bullet]) F = Q_1 - Q_2, \quad F|_{\mathbb{C} \setminus B(\nu)} = 0.
	\end{equation}
	Observe that by the unique continuation principle, $F$ is the unique solution to \eqref{eq:F-equation}, and so similarly to Step 2 of Theorem \ref{thm:reduction-to-complex-x-ray-connection} $F$ glues well on overlaps to give $F \in C^\infty(\mc{D} \times \mathbb{C}, \mathbb{C}^{r \times r})$ satisfying \eqref{eq:F-equation}. Extension to the interior works entirely the same as before to give \eqref{eq:endomorphism-potential-complex-transport} for $\mu + i\nu \in L_1$.
	\medskip
	
	\emph{Step 3: extension by homogeneity.} To keep the equation \eqref{eq:F-equation} scale invariant, we extend $F$ by $-1$ homogeneity to $L$:
	\[
		F(x_1, x_2, x, r(\mu + i\nu)) := \frac{1}{r} F(x_1, x_2, x, \mu + i\nu), \quad r > 0, (x_1, x_2, x, \mu + i\nu) \in L_1.
	\]
	Then by Step 2 above, and the same computation as in Step 4 of Theorem \ref{thm:reduction-to-complex-x-ray-connection}, we obtain \eqref{eq:endomorphism-potential-complex-transport}, which completes the proof.
\end{proof}

\section{Uniqueness in the setting of simple transversal geometry}\label{sec:simple-transversal-geometry}

In this section we prove Theorem \ref{thm:main-theorem-simple} based on the uniqueness for the complex transport problem obtained in Theorem \ref{thm:reduction-to-complex-x-ray-connection}. In fact, we will show

\begin{lemma}\label{lemma:connection-uniqueness}
	Assume $G$ satisfies the equation stated in Theorem \ref{thm:reduction-to-complex-x-ray-connection}. Then $G$ is independent of the $\mu + i\nu$ variable. More precisely, there exists $G_0 \in C^\infty(D \times M_0, \GL_r(\mathbb{C}))$, such that 
	\[
		G(x_1, x_2, x, \mu + i\nu) = G_0(x_1, x_2, x), \quad (x_1, x_2, x, \mu + i\nu) \in L.
	\]
	Moreover, $G^*A_1 = A_2$. Finally, if $A_i$ are unitary for $i = 1, 2$, then $G_0$ takes values in $\mathrm{U}(r)$.
\end{lemma}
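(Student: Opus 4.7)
The plan is to first prove that $G$ is constant on each fiber of the natural projection $L \to D \times M_0$, then read off $G_0^* A_1 = A_2$ by substituting the fiber-constant $G$ into the transport equation, and finally deduce unitarity from Lemma \ref{lemma:symmetries}(3).

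For fiber independence I follow the strategy of Eskin \cite{eskin-01}. Fix a base point $p = (x_1, x_2, x)$. The plan is to parameterize enough null vectors at $p$ by holomorphic families $\xi(t) = \mu(t) + i\nu(t) \in T_\mathbb{C}(\mathbb{R}^2 \times M_0)_p$, $t \in \mathbb{C} \setminus \{0\}$, taking values in the complex quadric $\{|\xi|^2 = 0\}$; rational parameterizations built from the Euclidean $\mathbb{R}^2$-direction (e.g.\ $\xi(t) = (1 - t^2)e_1 + 2t e_2 + i(1 + t^2) v$ for a unit $v \in T_x M_0$) provide such families, with $\xi(t) \in L$ for real $t$. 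Since $\mathbb{X}(\xi)$, $A_1(\xi)$, and $A_2(\xi)$ are all $\mathbb{C}$-linear in $\xi$, one would extend $G(p, \xi(t))$ holomorphically in $t$ past the real locus using the leafwise ellipticity of the transport equation (based on $-4 \partial_z \partial_{\bar z} = -\Delta_\mathbb{C}$) and the flatness of the $\mathbb{R}^2$-factor. Applying $\partial_{\bar t}$ to the transport equation and using $\partial_{\bar t}\xi \equiv 0$ then shows that $\partial_{\bar t} G(p, \xi(t))$ satisfies the same homogeneous transport equation as $G$ itself, with vanishing data on the outer component of $\partial(D \times M_0)$ (since $G \equiv \id$ there), so by the unique continuation argument invoked in Step 2 of Theorem \ref{thm:reduction-to-complex-x-ray-connection} one concludes $\partial_{\bar t} G(p, \xi(t)) \equiv 0$. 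Since $G$ is bounded (smooth on a compact base and $0$-homogeneous), Liouville's theorem yields that $t \mapsto G(p, \xi(t))$ is constant in $t$. Sweeping out the connected fiber $L_1|_p \cong \mathbb{S}^1 \times \mathbb{S}^{n-2}$ (connected since $n = 2 + \dim M_0 \geq 3$) by enough such families then shows $G(p, \cdot)$ is constant on the fiber, and defines $G_0(p) := G(p, \xi)$ as a smooth function on $D \times M_0$.

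The main obstacle is making the holomorphic extension of $G$ past the real locus $L$ rigorous, since the geodesic flow on $M_0$ does not a priori extend holomorphically. The way forward is to exploit that $\mathbb{X}(\xi)$ acts as a $\bar\partial$-operator along each bicharacteristic leaf and that the family $\xi(t)$ varies only in the flat $\mathbb{R}^2$-direction (where the complexified geodesic flow is elementary), so the extension reduces to an elliptic continuation problem transverse to the leaf.

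With fiber independence established, substituting $G(p, \xi) = G_0(p)$ into \eqref{eq:homomorphism-connection-complex-transport} gives $\mathbb{X}(\xi) G_0 = dG_0(\mu) + i\, dG_0(\nu) = dG_0(\xi)$, so
\[
    dG_0(\xi) + A_1(\xi) G_0 - G_0 A_2(\xi) = 0 \quad \text{for every } \xi = \mu + i\nu \in L_1|_p.
\]
The set of such $\xi$ $\mathbb{C}$-linearly spans $T_\mathbb{C}(\mathbb{R}^2 \times M_0)_p$ (a direct check using the freedom to pick $\mu \in \mathbb{R}^2 \setminus \{0\}$ and $\nu \in \{\mu\}^\perp$), so the identity extends $\mathbb{C}$-linearly to $dG_0 + A_1 G_0 - G_0 A_2 = 0$ as matrix-valued $1$-forms on $D \times M_0$. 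Rearranging yields $G_0^{-1} dG_0 + G_0^{-1} A_1 G_0 = A_2$, i.e.\ $G_0^* A_1 = A_2$. Finally, if $A_1$ and $A_2$ are unitary, Lemma \ref{lemma:symmetries}(3) applied to the $\xi$-independent $G_0$ gives $G_0 = G_0^{-*}$, so $G_0 G_0^* = \id$ and $G_0$ takes values in $\mathrm{U}(r)$.
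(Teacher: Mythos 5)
Your overall strategy (Eskin-style holomorphic families of null vectors, $\partial_{\bar t}$-differentiation of the transport equation, Liouville) is the same as the paper's, but there is a genuine gap at the crucial step, and you have correctly identified where it is without resolving it. Your family $\xi(t) = (1-t^2)e_1 + 2t e_2 + i(1+t^2)v$ lies in the null quadric for all $t$, but it lies in $L$ \emph{only for real $t$}: for non-real $t$ the real part of $\xi(t)$ acquires a component in the $M_0$-direction (from $i(1+t^2)v$ with $1+t^2 \notin \mathbb{R}$), so $\xi(t)$ is no longer of the form $\mu + i\nu$ with $\mu \in T\mathbb{R}^2$ and $\nu$ real, and $G(p,\xi(t))$ is simply undefined there. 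Since $\partial_{\bar t}$ only makes sense on an open set of $t$, the whole argument collapses unless $G$ is first extended holomorphically off $L$ — which is exactly the "main obstacle" you name, and which cannot be finessed by "leafwise ellipticity": $G$ satisfies no equation in the directions transverse to $L$ inside the quadric, so there is nothing to continue. The paper sidesteps this entirely by choosing the family
\[
    \xi^\pm(t) = \tfrac{i}{2}\bigl(t - \tfrac{1}{t}\bigr)\partial_1 - \tfrac{1}{2}\bigl(t + \tfrac{1}{t}\bigr)\partial_2 \pm iv, \qquad t \in \mathbb{C}^\times,
\]
in which only the $\mathbb{R}^2$-coefficients vary holomorphically (their real and imaginary parts feed separately into $\mu(t)$ and $\nu^\pm(t)$, both real, with $\mu(t) \neq 0$) while the $M_0$-component stays the fixed purely imaginary $\pm iv$. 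This family remains in $L$ for \emph{every} $t \in \mathbb{C}^\times$, so no extension of $G$ is needed; the price is the singularities at $t = 0, \infty$, handled via an explicit computation of the limit sets, Lemma \ref{lemma:symmetries}(2), and the Removable Singularities Theorem before Liouville.

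A secondary issue: even with the correct families, the union $\mc{S}(p) = \bigcup_v \mc{S}(p,v)$ of orbits on which constancy is established is not obviously the whole fiber $L_1|_p$, so "sweeping out the connected fiber by enough such families" does not immediately give fiber-independence. The paper instead defines $G_0 := G(\partial_1 + i\partial_2)$ (a common limit point of all the $\mc{S}(p,v)$), derives $G_0^* A_1 = A_2$ from constancy on $\mc{S}$ alone (your span argument for this part is fine and matches the paper), and only then concludes full independence of the $2$-frame variable by observing that the constant-in-fiber $G_0$ itself solves \eqref{eq:homomorphism-connection-complex-transport} and invoking uniqueness of solutions. Your derivation of the gauge relation and of unitarity via Lemma \ref{lemma:symmetries}(3) is correct conditional on fiber-independence, but the fiber-independence itself is not established by your argument as written.
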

\begin{proof}
	\emph{Step 1: holomorphic family of $2$-frames.} We will differentiate suitably in the vertical fibres and show that these derivatives vanish. Fix $v \in S_xM_0$; following \cite{eskin-01} we consider the following family of complex vectors, for $t \in \mathbb{C}^\times := \mathbb{C} \setminus \{0\}$
	\begin{align}\label{eq:xi-def}
	\begin{split}
		&\xi^\pm(t) := \xi^\pm(x, v, t) := \frac{i}{2}\left(t - \frac{1}{t}\right) \partial_1 - \frac{1}{2}\left(t + \frac{1}{t}\right) \partial_2 \pm iv\\
		&= \underbrace{-\frac{1}{2}\im\left(t - \frac{1}{t}\right) \partial_1 - \frac{1}{2}\re\left(t + \frac{1}{t}\right) \partial_2}_{\mu(t) :=} + i\underbrace{\left(\frac{1}{2} \re\left(t - \frac{1}{t}\right) \partial_1 - \frac{1}{2} \im \left(t + \frac{1}{t}\right)\partial_2 \pm v\right)}_{\nu^\pm(t) :=}.
	\end{split}
	\end{align}
	Using complex bilinearity of $|\bullet|^2$, we see that $|\xi^\pm(t)|^2 = 0$; also, writing $t = a + i b$ with $a, b \in \mathbb{R}$, we compute
	\begin{align}\label{eq:small-computation}
	\begin{split}
		\im\left(t - \frac{1}{t}\right) &= b\left(1 + \frac{1}{a^2 + b^2}\right), \quad \re\left(t + \frac{1}{t}\right) = a\left(1 + \frac{1}{a^2 + b^2}\right),\\
		\re\left(t - \frac{1}{t}\right) &= a\left(1 - \frac{1}{a^2 + b^2}\right), \quad \im\left(t + \frac{1}{t}\right) = b\left(1 - \frac{1}{a^2 + b^2}\right),\\
		r &:= r(t) := |\mu(t)| = \left(1 + \frac{1}{a^2 + b^2}\right) \sqrt{a^2 + b^2}.
	\end{split}
	\end{align}
	By the last equality, $\mu(t) \neq 0 $ and so $\xi^\pm(t) \in L(x_1, x_2, x)$ for any $(x_1, x_2) \in \mathbb{R}^2$. Observe that the complexified geodesic vector field in direction $\xi^\pm(t)$ takes the form
	\begin{equation}\label{eq:complex-geodesic-vector-field-along-xi}
		\mathbb{X}(x_1, x_2, x, \xi^\pm(t)) = \frac{i}{2} \left(t - \frac{1}{t}\right)\partial_1 - \frac{1}{2} \left(t + \frac{1}{t}\right) \partial_2 \pm iX(x, v), \quad (x_1, x_2) \in \mathbb{R}^2.
	\end{equation}
	Using this formula, as well as linearity of $A_i$ for $i = 1, 2$, we get
	\[
		\mathbb{X}(\xi^\pm(t)) \partial_{\bar{t}}G  + A_1(\xi^\pm(t)) \cdot \partial_{\bar{t}} G - \partial_{\bar{t}} G \cdot A_2(\xi^\pm(t)) = 0, 
	\]
	where $\partial_{\bar{t}} G = 0$ in the component of $\partial (D \times M_0)$ of $D \times M_0 \setminus M$. For fixed $t \in \mathbb{C}^\times$, in the bicharacteristic leaf generated by $\xi^\pm(t)$, this equation takes the familiar form \eqref{eq:G-equation} (with $G$ replaced by $\partial_{\bar{t}}G$). Since the solution vanishes for large $|z|$, we conclude by uniqueness that $\partial_{\bar{t}} G \equiv 0$.
	
	We next look at the limits $t \to 0$ and $t \to \infty$ of $\xi^\pm(t)$. In fact, re-normalising and using \eqref{eq:small-computation}, and writing $A := \frac{a}{\sqrt{a^2 + b^2}}$ and $B := \frac{b}{\sqrt{a^2 + b^2}}$ we get
	\[
		\frac{\xi^\pm(t)}{r(t)} = -\frac{1}{2}B \partial_1 - \frac{1}{2} A \partial_2 + i\left(\frac{1}{2} \frac{a^2 + b^2 -1}{a^2 + b^2 + 1} \left(A  \partial_1 - B \partial_2\right) \pm \frac{v}{r(t)} \right).
	\]
	Therefore assuming that $A = A(t) \to 2y_1 \in \mathbb{R}$ and $B = B(t) \to 2y_2 \in \mathbb{R}$ converges in the limits $t \to 0$ or $t \to \infty$, the quotient becomes
	\[
		\frac{\xi^\pm(t)}{r(t)} \to_{t \to \infty} - y_2 \partial_1 - y_1 \partial_2 + i(y_1 \partial_1 - y_2 \partial_2), \quad \frac{\xi^\pm(t)}{r(t)} \to_{t \to 0} - y_2 \partial_1 - y_1 \partial_2 + i(-y_1 \partial_1 + y_2 \partial_2).
	\]
	When varied over the set of possible $(y_1, y_2)$, this is precisely the set of limit points of the quotient and note that according to Lemma \ref{lemma:symmetries} (Item 2), the value of $G$ is constant on these limit sets individually. Therefore, the function $\mathbb{C}^\times \ni t \mapsto G(\xi^\pm(t))$ is holomorphic and continuous close to zero and close to infinity. By the Removable Singularities Theorem, it then admits a holomorphic extension to $\mathbb{C}$, and by Liouville's theorem it is constant. We have thus shown that $G$ is constant on the set 
	\begin{align*}
		\mc{S}(x_1, x_2, x, v) &:= \left(\bigcup_{t \in \mathbb{C}^\times} \xi^+(x, v, t)\right) \cup \left(\bigcup_{t \in \mathbb{C}^\times} \xi^-(x, v, t)\right) \cup \{\partial_1 + i \partial_2\} \cup \{\partial_1 - i\partial_2\},\\
                \mc{S}(x_1, x_2, x) &:= \bigcup_{v \in S_xM_0} \mc{S}(x_1, x_2, x, v).
	\end{align*}
 \medskip
 
	\emph{Step 2: uniqueness.} We now set
	\begin{equation}\label{eq:gauge-def}
		G_0(x_1, x_2, x) := G(x_1, x_2, x, \partial_1 + i\partial_2), \quad (x_1, x_2, x) \in D \times M_0.
	\end{equation}
	
	We claim that $G_0$ satisfies the conditions of the lemma. Indeed, for $\mu + i\nu \in \mc{S}(x_1, x_2, x)$ and writing $\nu = a\mu_\perp + v$, where $\mu_\perp$ is the (oriented) unit normal to $\mu$, $a \in \mathbb{R}$, and $v \in T_xM_0$, we compute ($\gamma_v(t)$ is the geodesic generated by $(x, v)$):
	\begin{align}\label{eq:complex-gauge-equivalence}
    \begin{split}
		&dG_0(x_1, x_2, x)(\mu + i\nu) = \partial_t|_{t = 0} G_0\big((x_1, x_2) + t\mu, x)\big) + i\partial_t|_{t = 0} G_0\big((x_1, x_2) + t a\mu_\perp, \gamma_v(t)\big)\\
		&= \partial_t|_{t = 0} G\big((x_1, x_2) + t\mu, x, \mu + i\nu\big) + i\partial_t|_{t = 0} G\big((x_1, x_2) + t a\mu_\perp, \gamma_v(t), \mu + i(a\mu_\perp + \dot{\gamma}_v(t)\big)\\
		&= \mathbb{X}(x_1, x_2, x)(\mu + i\nu) G(x_1, x_2, x, \mu + i \nu)\\
		&= -A_1(x_1, x_2, x)(\mu + i\nu) G_0(x_1, x_2, x) + G_0(x_1, x_2, x) A_2(x_1, x_2, x),
    \end{split}
	\end{align}
	where in the second equality we used the definition of $G_0$ as well as the fact that $\mc{S}$ is invariant under the geodesic flow, in the third equality we used the definition of the complexified geodesic vector field $\mathbb{X}$, and in the last equality we used \eqref{eq:homomorphism-connection-complex-transport}. Plugging in $\xi^\pm(x, v, t)$ into the preceding equality (for an arbitrary fixed $t \in \mathbb{C}^\times$) and subtracting we obtain
	\begin{equation}\label{eq:v-gauge-equivalence}
		G_0^{-1} dG_0(v)  + G_0^{-1} A_1(v) G_0 = A_2(v), \quad v \in T_x M_0.
	\end{equation}
	Furthermore, since $\partial_1 \pm i \partial_2 \in \mc{S}(x_1, x_2, x)$, subtracting and summing, and combining with \eqref{eq:v-gauge-equivalence}, we conclude that $G_0^* A_1 = A_2$. Finally, we are left to observe that hence $G_0$ also satisfies \eqref{eq:homomorphism-connection-complex-transport} and so by uniqueness of solutions $G$ is independent of the $2$-frame variable, completing the proof in the $\GL_r(\mathbb{C})$ case.

    In the unitary case, it suffices to observe that the gauge defined in \eqref{eq:gauge-def} is unitary. Indeed, we have
	\[
		G_0 = G(\partial_1 + i\partial_2) = G^{-*}(\partial_1 - i\partial_2) = G^{-*}(\partial_1 + i\partial_2) = G_0^{-*},
	\]
	where in the second equality we used Lemma \ref{lemma:symmetries} (Item 3) and in the third one we used that $G_0$ is constant on $\mc{S}(x_1, x_2, x)$, concluding the proof.
\end{proof}

Since we only know that $G_0$ from the preceding lemma is equal to identity on the outer boundary, we make a reduction of our problem.

\begin{lemma}\label{lemma:convexification}
	There exists a compact manifold with boundary $\widetilde{M} \subset \mathbb{R}^2 \times M_0$, containing $M$, such that $\partial \widetilde{M}$ is connected, and $0$ is not a Dirichlet eigenvalue of the operators $\Lie_{A_i, Q_i}$ over $\widetilde{M}$ for $i = 1, 2$. If $\Lambda_{A_1, Q_1} = \Lambda_{A_2, Q_2}$, then the DN maps $\widetilde{\Lambda}_{A_1, Q_1}$ and $\widetilde{\Lambda}_{A_2, Q_2}$ defined over $\widetilde{M}$ agree, $\widetilde{\Lambda}_{A_1, Q_1} = \widetilde{\Lambda}_{A_2, Q_2}$.
\end{lemma}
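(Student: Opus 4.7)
The plan is to define $\widetilde{M}$ by filling in the inner boundary components of $M$ using the topological results of \S\ref{ssec:topological-reduction}, extend the coefficients so that they coincide on $\widetilde{M}\setminus M^\circ$, and then verify the DN map identity by a standard gluing argument along $\partial M$.

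First, since $(M_0, g_0)$ is simple it is diffeomorphic to a ball, so Lemma \ref{lemma:topology-1} produces a connected compact codimension-zero submanifold $\widetilde{M}\subset \mathbb{R}^2\times M_0$ containing $M$ whose boundary $\partial\widetilde M$ coincides with the (connected) outer boundary of $M$. By Proposition \ref{proposition:boundary-determination} (which preserves DN maps), after applying gauge transformations that are the identity on $\partial M$ we may assume the full jets of $A_1-A_2$ and $Q_1-Q_2$ vanish on $\partial M$. Consequently one can choose smooth extensions of the $(A_i,Q_i)$ to $\widetilde M$ which satisfy $A_1\equiv A_2$ and $Q_1\equiv Q_2$ on $\widetilde M\setminus M^\circ$.

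To arrange that $0$ is not a Dirichlet eigenvalue of either $\Lie_{A_i,Q_i}$ on $\widetilde M$, fix a non-trivial smooth cutoff $\chi\in C^\infty_{\comp}(\widetilde M\setminus \overline{M})$ and replace the common extension of $Q_i$ by $Q_i+\lambda\chi\cdot \id$ for a single parameter $\lambda\in\mathbb{C}$. By analytic Fredholm theory applied to $(\Lie_{A_i,Q_i+\lambda\chi})^{-1}$, the set of $\lambda$ for which $0$ belongs to the Dirichlet spectrum of $\Lie_{A_i,Q_i+\lambda\chi}$ is discrete in $\mathbb{C}$ for each $i$; choose $\lambda$ outside the union of these two discrete sets. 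Since this perturbation is supported outside $M$, the operators $\Lie_{A_i,Q_i}$ on $M$ and hence their DN maps $\Lambda_{A_i,Q_i}$ are unchanged, so the hypothesis $\Lambda_{A_1,Q_1}=\Lambda_{A_2,Q_2}$ is preserved. This step is the main obstacle; the remainder is routine gluing.

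Finally, for any $f\in C^\infty(\partial\widetilde M,\mathbb{C}^r)$ let $u_1$ solve $\Lie_{A_1,Q_1}u_1=0$ on $\widetilde M$ with $u_1|_{\partial\widetilde M}=f$, set $h:=u_1|_{\partial M}$, and let $u_2$ solve $\Lie_{A_2,Q_2}u_2=0$ on $M$ with $u_2|_{\partial M}=h$. The hypothesis gives $\iota_\nu d_{A_2}u_2|_{\partial M}=\Lambda_{A_2,Q_2}h=\Lambda_{A_1,Q_1}h=\iota_\nu d_{A_1}u_1|_{\partial M}$. Define
\[
    \tilde u_2:=u_2 \text{ on } M,\qquad \tilde u_2:=u_1 \text{ on } \widetilde M\setminus M^\circ.
\]
The matching of the Dirichlet and Neumann traces on $\partial M$, together with $A_1=A_2$, $Q_1=Q_2$ on $\widetilde M\setminus M^\circ$, ensures that $\tilde u_2\in H^2(\widetilde M,\mathbb{C}^r)$ and satisfies $\Lie_{A_2,Q_2}\tilde u_2=0$ weakly on $\widetilde M$ with $\tilde u_2|_{\partial\widetilde M}=f$; by elliptic regularity and Step 3 it is the unique smooth solution, hence $\widetilde\Lambda_{A_2,Q_2}f=\iota_\nu d_{A_2}\tilde u_2|_{\partial\widetilde M}=\iota_\nu d_{A_1}u_1|_{\partial\widetilde M}=\widetilde\Lambda_{A_1,Q_1}f$, as required.
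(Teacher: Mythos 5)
Your Steps 1, 2 and 4 are sound, but Step 3 — which you yourself identify as the main obstacle — has a genuine gap. The analytic Fredholm theorem applied to the affine family $\lambda \mapsto \Lie_{A_i, Q_i + \lambda\chi}$ only yields a dichotomy: either the operator is non-invertible for \emph{every} $\lambda \in \mathbb{C}$, or the set of bad $\lambda$ is discrete. To land in the second branch you must exhibit at least one $\lambda_0$ for which the Dirichlet realisation on $\widetilde{M}$ is invertible, and nothing in your argument provides this. The usual route (G\aa rding inequality giving coercivity of $\Lie_{A_i, Q_i} + \lambda$ for $\re\lambda$ large) requires the perturbation to be uniformly positive on all of $\widetilde{M}$, which would destroy the very feature you need, namely that the DN maps on $\partial M$ are unchanged; with $\chi$ supported in $\widetilde{M}\setminus\overline{M}$ the candidate null-vectors can concentrate away from $\supp\chi$, and since $\Lie_{A_i,Q_i}$ is non-self-adjoint one cannot invoke monotonicity of eigenvalues in $\lambda$ either. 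So the discreteness claim is unproven as written. For comparison, the paper sidesteps the issue differently: it takes the manifold $M'$ produced by Lemma \ref{lemma:topology-1} and enlarges it further to $\widetilde{M}$, invoking domain monotonicity of Dirichlet eigenvalues to arrange that $0$ is not in the Dirichlet spectrum over $\widetilde{M}$.

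The remainder of your argument is correct and, for the DN map identity, genuinely different from the paper's. You prove $\widetilde{\Lambda}_{A_1,Q_1} = \widetilde{\Lambda}_{A_2,Q_2}$ by a transmission argument: matching Cauchy data of $u_1$ and $u_2$ across the inner boundary components (which is where the equality of the original DN maps enters), gluing, and using uniqueness on $\widetilde{M}$; this is elementary and self-contained, though it quietly uses that $A_1 = A_2$ on $\partial M$ so that matching of $\iota_\nu d_{A_i}u_i$ is equivalent to matching of $\partial_\nu u_i$ — which your appeal to Proposition \ref{proposition:boundary-determination} does justify. The paper instead applies the integral identity of Proposition \ref{prop:integral-identity} on $\widetilde{M}$, observes that all coefficient differences in the bulk terms are supported in $M$ so the integrals localise to $M$, and then applies the identity again on $M$ to recognise $\langle(\Lambda_{A_1,Q_1}-\Lambda_{A_2,Q_2})F_1,F_2\rangle = 0$. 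Both routes are valid once well-posedness on $\widetilde{M}$ is secured; yours avoids the integral identity at the price of an explicit elliptic-regularity gluing step, while the paper's reuses machinery already in place.
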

\begin{proof}
	By Lemma \ref{lemma:topology-1}, there exists a compact smooth manifold $M' \subset \mathbb{R}^2 \times M_0$ with boundary equal to the outer boundary of $M$. Then by possibly increasing $M'$ to $\widetilde{M}$, and using \emph{domain monotonicity} (i.e. monotonicity of Dirichlet eigenvalues with respect to enlarging the domain), we can guarantee that $0$ is not a Dirichlet eigenvalue of neither $\Lie_{A_1, Q_1}$ nor $\Lie_{A_2, Q_2}$ over $\widetilde{M}$. Therefore, the Dirichlet problem is well-posed and uniquely solvable, and in particular the DN maps $\widetilde{\Lambda}_{A_1, Q_1}$ and $\widetilde{\Lambda}_{A_2, Q_2}$ are well-defined.
	
	For the final claim, consider arbitrary $\widetilde{F}_1, \widetilde{F}_2 \in C^\infty(\partial \widetilde{M}, \mathbb{C}^{r \times r})$; there are unique $\widetilde{U}_1, \widetilde{U}_2 \in C^\infty(\widetilde{M}, \mathbb{C}^{r \times r})$ satisfying
	\[
		\Lie_{A_i, Q_i} \widetilde{U}_i = 0, \quad \widetilde{U}_i|_{\partial \widetilde{M}} = \widetilde{F}_i, \quad i = 1, 2.
	\]
	Then we compute
	\begin{align*}
		&\langle{(\widetilde{\Lambda}_{A_1, Q_1} - \widetilde{\Lambda}_{A_2, Q_2}) \widetilde{F}_1, \widetilde{F}_2}\rangle_{L^2(\partial \widetilde{M}, \mathbb{C}^{r \times r})}\\ 
		&=  \langle{(Q_1 - Q_2 + |A_2|^2 - |A_1|^2) \widetilde{U}_1, \widetilde{U}_2}\rangle_{L^2(\widetilde{M}, \mathbb{C}^{r \times r})} + \langle{\widetilde{U}_1\cdot d\widetilde{U}_2^* - d\widetilde{U}_1\cdot \widetilde{U}_2^*, A_1^* - A_2^*}\rangle_{L^2(\widetilde{M}, \mathbb{C}^{r \times r})}\\
		&= \langle{(Q_1 - Q_2 + |A_2|^2 - |A_1|^2) \widetilde{U}_1, \widetilde{U}_2}\rangle_{L^2(M, \mathbb{C}^{r \times r})} + \langle{\widetilde{U}_1\cdot d\widetilde{U}_2^* - d\widetilde{U}_1\cdot \widetilde{U}_2^*, A_1^* - A_2^*}\rangle_{L^2(M, \mathbb{C}^{r \times r})}\\
		&= \langle{(\Lambda_{A_1, Q_1} - \Lambda_{A_2, Q_2}) F_1, F_2}\rangle_{L^2(\partial M, \mathbb{C}^{r \times r})} = 0,
	\end{align*}
	where in the first equality we used Proposition \ref{prop:integral-identity}, in the second one we used that $A_2 = A_1$ and $Q_2 = Q_1$ on $\widetilde{M} \setminus M$, and in the final line we used Proposition \ref{prop:integral-identity} again and the equality $\Lambda_{A_1, Q_1} = \Lambda_{A_2, Q_2}$. Since $\widetilde{F}_1$ and $\widetilde{F}_2$ were arbitrary, this gives $\widetilde{\Lambda}_{A_1, Q_1} \equiv \widetilde{\Lambda}_{A_2, Q_2}$, which completes the proof.
\end{proof}

Next, we deal with the potentials and we show in fact that the gauge constructed in Lemma \ref{lemma:connection-uniqueness} intertwines the potentials as well.

\begin{lemma}\label{lemma:potential-uniqueness}
	If $G := G_0$ is the automorphism constructed in Lemma \ref{lemma:connection-uniqueness}, then $G^*Q_1 = Q_2$.
\end{lemma}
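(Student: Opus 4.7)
The plan is to apply the gauge $G := G_0$ from Lemma \ref{lemma:connection-uniqueness} to reduce to the case $A_1 = A_2$, invoke Theorem \ref{thm:reduction-to-complex-x-ray-potential} to produce a matrix function $F$ on $L$ encoding the potential discrepancy, and then to show $F \equiv 0$ along the holomorphic family $\xi^\pm(t)$ of \eqref{eq:xi-def} via a Liouville-type argument in the spirit of Lemma \ref{lemma:connection-uniqueness}. Substituting back into the transport equation on this family will then immediately force $G_0^* Q_1 = Q_2$.

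Concretely, since $G_0|_{\partial \widetilde{M}} = \id$ (after invoking Lemma \ref{lemma:convexification}) and $G_0^* A_1 = A_2$, Proposition \ref{prop:DN-invariance} gives $\Lambda_{A_2, G_0^* Q_1} = \Lambda_{A_1, Q_1} = \Lambda_{A_2, Q_2}$. Setting $\widetilde{Q}_1 := G_0^* Q_1$ and applying Theorem \ref{thm:reduction-to-complex-x-ray-potential} with common connection $A := A_2$ produces $F \in C^\infty(L, \mathbb{C}^{r \times r})$ with
\begin{equation*}
    \mathbb{X}(\mu + i\nu) F + [A(\mu + i\nu), F] = \widetilde{Q}_1 - Q_2,
\end{equation*}
and $F \equiv 0$ on the outer-boundary portion of $L$ (base points on the connected component of $\partial(\mathbb{R}^2 \times M_0)$ inside the complement of $\widetilde{M}$). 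It suffices to prove $\widetilde{Q}_1 = Q_2$.

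For this, fix $(x_1, x_2, x) \in D \times M_0$ and $v \in S_x M_0$, and study $f(t) := F(x_1, x_2, x, \xi^\pm(x, v, t))$ on $\mathbb{C}^\times$. Because $\xi^\pm(t)$ depends holomorphically on $t$, differentiating the transport equation in $\bar{t}$ and using the linearity of $\mathbb{X}$ and $A$ yields the homogeneous equation
\begin{equation*}
    \mathbb{X}(\xi^\pm(t)) \partial_{\bar{t}} F + [A(\xi^\pm(t)), \partial_{\bar{t}} F] = 0
\end{equation*}
along the bicharacteristic leaf of $\xi^\pm(t)$. Since $\partial_{\bar{t}} F$ vanishes on the leaf's exterior portion, the unique continuation argument from Step 2 of Theorem \ref{thm:reduction-to-complex-x-ray-potential} forces $\partial_{\bar{t}} F(\xi^\pm(t)) \equiv 0$, so $f$ is holomorphic on $\mathbb{C}^\times$. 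Combining this with the $(-1)$-homogeneity of $F$ and the fact that $r(t) := |\mu(t)| \to \infty$ as $t \to 0$ or $t \to \infty$ (see \eqref{eq:small-computation}), we get $f(t) = r(t)^{-1} F(\xi^\pm(t)/r(t)) \to 0$ at both ends. By the Removable Singularities Theorem and Liouville's theorem, $f \equiv 0$; hence $F$ vanishes on the entire family $\{\xi^\pm(x, v, t) : v \in S_xM_0,\, t \in \mathbb{C}^\times\}$.

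Finally, this family is invariant under the complexified geodesic flow generated by $\mathbb{X}(\xi^\pm(t))$: moving along $\mu(t) \in \mathbb{R}^2$ trivially preserves the fiber vector, while moving along the $\nu^\pm(t)$-component reproduces $\xi^\pm$ at the new base point via Levi-Civita parallel transport of $v$ to $\dot{\gamma}_v(s)$. Hence $\mathbb{X}(\xi^\pm(t)) F = 0$ on this family as well, and evaluating the transport equation at any $\xi^\pm(t)$ collapses to $\widetilde{Q}_1 - Q_2 = 0$. The main technical subtlety is the clean closing of the $\partial_{\bar{t}}$-differentiated equation, which hinges on the holomorphy of $\xi^\pm(t)$ and on the $t$-independence of $\widetilde{Q}_1 - Q_2$; notice that the mechanism here differs from Lemma \ref{lemma:connection-uniqueness} in that the $(-1)$-homogeneity of $F$ forces the Liouville constant to be exactly $0$, whereas for $G$ it was a non-trivial matrix.
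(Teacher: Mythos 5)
Your proposal is correct and follows essentially the same route as the paper: reduce to a common connection via the gauge $G_0$ and Lemma \ref{lemma:convexification}, invoke Theorem \ref{thm:reduction-to-complex-x-ray-potential} to get $F$, kill $\partial_{\bar t}F$ along the holomorphic family $\xi^\pm(t)$ by leafwise uniqueness, use the $(-1)$-homogeneity of $F$ together with $|\mu(t)|\to\infty$ to force the Liouville constant to vanish, and then read off $Q - Q_2 = 0$ from the transport equation using flow-invariance of the family. The only cosmetic difference is that the paper phrases the commutation of $\partial_{\bar t}$ with the transport operator via the explicit formula \eqref{eq:complex-geodesic-vector-field-along-xi}, which is exactly the holomorphy you appeal to.
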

\begin{proof}
 	Let $\widetilde{M}$ be the compact manifold with boundary constructed in Lemma \ref{lemma:convexification}. Observe that by Lemma \ref{lemma:connection-uniqueness}, $G|_{\partial \widetilde{M}} = \id$ and so $\widetilde{\Lambda}_{G^*A_1, G^*Q_1} = \widetilde{\Lambda}_{A_2, Q_2}$; write $A := G^*A_1 = A_2$ and $Q := G^*Q_1$. Then we would like to prove $Q \equiv Q_2$. By Theorem \ref{thm:reduction-to-complex-x-ray-potential} (applied to $\widetilde{M}$), there exists $F \in C^\infty(L, \mathbb{C}^{r \times r})$ such that
 	\begin{equation}\label{eq:F-equation-convexified}
 		(\mathbb{X}(\mu + i\nu) + [A(\mu + i\nu), \bullet])F = Q - Q_2, \quad F|_{\mathbb{R}^2 \times M_0 \setminus M} = 0.
 	\end{equation}
 	Note that $F|_{\mathbb{R}^2 \times M_0 \setminus M} = 0$ follows from $F|_{\mathbb{R}^2 \times M_0 \setminus \widetilde{M}} = 0$ and the unique continuation principle. We would like to show that $F \equiv 0$; we will use the same technique and notation as in Lemma \ref{lemma:connection-uniqueness}. Using the family $\xi^\pm(t) \in L$ defined in \eqref{eq:xi-def} and differentiating with $\partial_{\bar{t}}$ we get
 	\[
 		(\mathbb{X}(\xi^\pm(t)) + [A(\xi^\pm(t)), \bullet]) \partial_{\bar{t}} F = 0, \quad \partial_{\bar{t}} F|_{\mathbb{R}^2 \times M_0 \setminus M} = 0,
 	\]
 	where the commutation of $\mathbb{X}$ and $\partial_{\bar{t}}$ is justified using \eqref{eq:complex-geodesic-vector-field-along-xi}, and we used linearity of $A$ in its argument. For each fixed $t \in \mathbb{C}^\times$, by uniqueness of solutions to $(\partial_{\bar{z}} + [A(\partial_{\bar{z}}), \bullet]) \partial_{\bar{t}} F = 0$ in the bicharacteristic leaves generated by $\xi^\pm(t)$, we get that $\partial_{\bar{t}} F \equiv 0$.
 	
 	Next, for any fixed $(x_1, x_2, x)$ and $v \in S_xM$, we claim that $F(x_1, x_2, x)(\xi^\pm(t)) \to 0$ as $t \to 0$ or $t \to \infty$. Indeed, by $-1$-homogeneity we have
 	\[
 		F(\xi^\pm(t)) = \frac{1}{|\mu(t)|} F\left(\frac{\xi^\pm(t)}{|\mu(t)|}\right) \to 0, \quad t \to 0, \infty, 
 	\]
 	since $F|_{L_1}$ is bounded and as $|\mu(t)| \to \infty$ as $t \to 0, \infty$ by the last line of \eqref{eq:small-computation}. Therefore by the Removable Singularities Theorem, and by Liouville's theorem, we get $F(\xi^\pm(t)) \equiv 0$. By the fact $(x_1, x_2, x)$ was arbitrary, and since $\mc{S}$ is invaraint by the geodesic flow, coming back to \eqref{eq:F-equation-convexified} yields $Q \equiv Q_2$ which completes the proof of the lemma. In fact, a posteriori also by uniqueness in \eqref{eq:F-equation-convexified} we get $F \equiv 0$, which proves the claim.
\end{proof}

We are in good shape to prove one of the main theorems.

\begin{proof}[Proof of Theorem \ref{thm:main-theorem-simple}]
We divide the proof in two steps.
\medskip

\emph{Step 1: case $c \equiv 1$.} By Lemmas \ref{lemma:connection-uniqueness}, \ref{lemma:convexification}, and \ref{lemma:potential-uniqueness}, there exists $G \in C^\infty(M, \GL_r(\mathbb{C}))$ such that $G^*A_1 = A_2$ and $G^*Q_1 = Q_2$, and $G = \id$ on the outer boundary $\mc{O}$ of $M$. We are therefore left to show that $G|_{\partial M} = \id$, i.e. $G$ is equal to the identity on the whole of $\partial M$. Indeed, let $F \in C^\infty(\partial M, \mathbb{C}^{r \times r})$ be arbitrary, and let $U_1, U_2 \in C^\infty(M, \mathbb{C}^{r \times r})$ be the unique solutions to 
	\[
		\Lie_{A_i, Q_i} U_i = 0, \quad U_i|_{\partial M} = F, \quad i = 1, 2.
	\]
	Notice that by assumption $G^{-1} \Lie_{A_1, Q_1} G = \Lie_{A_2, Q_2}$; thus $V := G U_2$ satisfies $\Lie_{A_1, Q_1}V = 0$ and $V|_{\mc{O}} = U_1|_{\mc{O}}$. Moreover, we have
	\[
		\partial_\nu V|_{\mc{O}} = (\partial_\nu G \cdot U_2 + G \cdot \partial_\nu U_2)|_{\mc{O}} = \big(G A_2(\partial_\nu) - A_1(\partial_\nu) G\big) U_2 |_{\mc{O}} + \partial_\nu U_2|_{\mc{O}} = \partial_{\nu} U_2|_{\mc{O}},
	\]
	where in the second equality we used $G^*A_1 = G^{-1}dG + G^{-1} A_1 G = A_2$, as well as that $G|_{\mc{O}} = \id$, and in the third equality we used again $G|_{\mc{O}} = \id$ and that $A_1$ and $A_2$ agree on the boundary. Therefore, by unique continuation for diagonal elliptic systems (see \cite[Theorem 3.5.2]{isakov-17}), we conclude that $V \equiv U_2$ and in particular 
	\[
		G (x) F(x) = G(x) U_1(x) = V(x) = U_2(x) = F(x), \quad x \in \partial M.
	\]
	Since $F$ was arbitrary, we conclude that $G|_{\partial M} \equiv \id$, which completes the proof in the case $c \equiv 1$.
	\medskip
	
	\emph{Step 2: arbitrary $c$.} Writing $\Lie_{g, A, Q} = d_A^* d_A + Q$ (where we now emphasise the dependence on the metric $g$; $A$ is a connection and $Q$ a potential), we then have the identity
	\begin{equation}\label{eq:conformal-identity}
		c^{\frac{n + 2}{4}}\Lie_{c g, A, Q} c^{-\frac{n - 2}{4}} = \Lie_{g, A, c(Q + Q_c)}, \quad Q_c := c^{\frac{n - 2}{4}} (-\Delta_g) (c^{-\frac{n - 2}{4}}).
	\end{equation}
	We will revisit Theorems \ref{thm:reduction-to-complex-x-ray-connection} and \ref{thm:reduction-to-complex-x-ray-potential} and the construction of CGOs. We consider first Theorem \ref{thm:reduction-to-complex-x-ray-connection} and freely use its notation; we will highlight the main differences to the case $c \equiv 1$. By \eqref{eq:conformal-identity} we may consider solutions
	\[
		U_i = e^{-\frac{x_\mu + ir}{h}} c^{-\frac{n - 2}{4}} (|g|^{-\frac{1}{4}} C_i + R_i), \quad i = 1, 2,
	\]
	where $C_1$ and $C_2$ satisfy \eqref{eq:transport-1-simple} and \eqref{eq:transport-2-simple}, respectively (as before). In the integral identity \eqref{eq:integral-identity-inside}, the term $T_1$ in \eqref{eq:T-1} remains equal to $\mc{O}(1)$ since $c$ is bounded. For the term $T_2$ in \eqref{eq:T-2}, we note that the term involving $dc$ is equal to $\mc{O}(1)$, so we get a multiplicative factor of the form
	\begin{align*}
		T_2 &= \mc{O}(1) + 2h^{-1} \int_M \Tr \left(C_1 C_2^* (\widetilde{A}_1 + i\widetilde{A}_r)\right)\, c^{-\frac{n - 2}{2}} c^{-1} c^{\frac{n}{2}} |g|^{-\frac{1}{2}}\, d\vol_{g}\\
		&= \mc{O}(1) + 2h^{-1} \int_M \Tr \left(C_1 C_2^* (\widetilde{A}_1 + i\widetilde{A}_r)\right)\, dx_\mu dr d\theta,
	\end{align*}
	where in the first line we used that $d\vol_{cg} = c^{\frac{n}{2}} d\vol_g$, and the factor $c^{-1}$ appears by using the formula $\langle{\bullet, \bullet}\rangle_{cg} = c^{-1} \langle{\bullet, \bullet}\rangle_g$. Hence we obtain the same formula and the construction of the gauge $G$ such that $G^*A_1 = A_2$ remains the same then on (i.e. the remainder of Theorem \ref{thm:reduction-to-complex-x-ray-connection} and Lemma \ref{lemma:connection-uniqueness}).
	
	For the uniqueness of potential, we look again at Theorem \ref{thm:reduction-to-complex-x-ray-potential} and the integral identity \eqref{eq:integral-identity-inside-potentials}. We get instead in the limit $h \to 0$:
	\[
		\int_{B(\nu)} C^{-1} c(Q_1 - Q_2) C\, dx_\mu dr d\theta = 0,
	\]
	i.e. there is an additional multiplicative factor $c$. From then on (in the remainder of Theorem \ref{thm:reduction-to-complex-x-ray-potential} and Lemma \ref{lemma:potential-uniqueness}) we see it is possible to work with $cQ_i$ instead of $Q_i$, for $i = 1, 2$. Since $c > 0$, we conclude that $Q_1 \equiv Q_2$, which completes the proof.	 
\end{proof}

\section{Uniqueness in the setting of arbitrary transversal geometry}\label{sec:gaussian-beams}

In this section we prove Theorem \ref{thm:main-theorem-gaussian-beams}, allowing the transversal manifold $(M_0, g_0)$ to be an arbitrary Riemannian manifold. The proof relies on the Gaussian Beams construction carried out in \cite{cekic-17}, which replaces the simplicity assumption on $(M_0, g_0)$. However, the main ideas have already been laid out in the previous two sections, so we will mainly focus on the differences in the proofs. By Lemma \ref{lemma:topology-2} we may assume without loss of generality that $(M_0, g_0)$ has strictly convex and connected boundary. We consider the extension $(N_0, g_0)$ of $(M_0, g_0)$ as constructed in \S \ref{ssec:extensions}, as well as an intermediate extension $(M_{0e}, g_0)$ with strictly convex boundary.

Say that a geodesic segment between boundary points $\gamma: [0, L] \to M_0$ is \emph{non-tangential} if $\dot{\gamma}(0)$ and $\dot{\gamma}(L)$ are transversal to the boundary $\partial M_0$ and $\gamma((0, L)) \subset M_0^\circ$. Define the set of \emph{non-trapped bicharacteristics} $\mc{L}_1 \subset L_1$
\[
	\mc{L}_1 := \{(x_1, x_2, x, \mu + i\nu) \in L_1 \mid d\pi_{\mathbb{R}^2}(x_1, x_2, x) \nu \neq 0 \,\, \mathrm{or}\,\, \nu \in S_xM_0\,\,\mathrm{and}\,\,\gamma_\nu \,\, \mathrm{is} \,\, \mathrm{non\text{-}tangential}\},
\]
where $\pi_{\mathbb{R}^2}$ is the projection onto $\mathbb{R}^2$ and $\gamma_\nu$ is the geodesic generated by $\nu$. The idea is that for non-trapped bicharacteristics we can solve the Del Bar equation and construct suitable CGOs. Observe that $\mc{L}_1$ is open: indeed, the condition $d\pi_{\mathbb{R}^2}(x_1, x_2, x) \nu \neq 0$ is trivially open, while $\nu$ non-tangential is open by strict convexity of $\partial M_0$. Observe also that $\mc{L}_1$ is invariant under the action of the geodesic flow, that is, it is a union of bicharacteristic leaves. We will also need the scaled set of non-trapped bicharacteristics
\[
	\mc{L} := \{(x_1, x_2, x, r(\mu + i\nu)) \in \mc{L}_1 \mid r \in \mathbb{R}_{> 0} \} \subset L.
\]

\begin{proof}[Proof of Theorem \ref{thm:main-theorem-gaussian-beams}]
	For simplicity we consider the case $c \equiv 1$; the case $c \not \equiv 1$ follows by similar considerations as in the proof of Theorem \ref{thm:main-theorem-simple}. We proceed in steps, giving analogues of Theorems \ref{thm:reduction-to-complex-x-ray-connection} and \ref{thm:reduction-to-complex-x-ray-potential} along the way.
	
	\medskip
	
	\emph{Step 1: preparation.} We claim that Theorem \ref{thm:reduction-to-complex-x-ray-connection} still holds, with the only change being that \eqref{eq:homomorphism-connection-complex-transport} holds on the set of non-trapped bicharacteristics, that is, there is a $G \in C^\infty(\mc{L}_1|_{\mathbb{R}^2 \times M_0}; \GL_r(\mathbb{C}))$ satisfying \eqref{eq:homomorphism-connection-complex-transport}, with $G \equiv \id$ on the connected component of $\mathbb{R}^2 \times \partial M_0$ inside $\mathbb{R}^2 \times M_0 \setminus M$. We give a slightly simplified proof, as we do not have to construct a family of solutions to \eqref{eq:transport-1-simple} for points close to the boundary; we argue directly with points of $\mc{L}_1$.
	
	For any $(x_1, x_2, x, \mu + i\nu) \in \mc{L}_1|_{\mathbb{R}^2 \times M_{0e}^\circ}$, consider the bicharacteristic leaf $B(\mu + i\nu)$ it generates and the natural coordinates $z = s + it \in \mathbb{C}$ (as in \eqref{eq:leaf-explicit}); $B(\mu + i \nu)$ is then identified with $\mathbb{C}$. Let $\mc{U} \subset \mc{L}_1$ be an open neighbourhood of $(x_1, x_2, x, \mu + i \nu)$ diffeomorphic to a disk. Consider the equation
	\[
		\partial_{\bar{z}} C_1 + A_1(\partial_{\bar{z}}) C_1 = 0, \quad z \in B(\mu + i\nu) \simeq \mathbb{C}.
	\]
	By the non-trapping assumption, $A_1(z)$ has compact support in $z \in \mathbb{C}$. Similarly to Lemma \ref{lemma:parametric-del-bar}, we may solve this equation with  $C_1 \in C^\infty(\mc{U} \times \mathbb{C}, \GL_r(\mathbb{C}))$ and
	\[
		\partial_z C_2 - A_2^*(\partial_z) C_2 = 0, \quad z \in B(\mu + i\nu), \quad C_2 \in C^\infty(\mc{U} \times \mathbb{C}; \GL_r(\mathbb{C})).
	\]
	Now, form $G := C_1 C_2^*$; it satisfies the same equation as \eqref{eq:G-equation}. If we were able to modify $C_2$ as in the proof of Theorem \ref{thm:reduction-to-complex-x-ray-connection} (Step 2), so that $G(z) \equiv \id$ for large enough $z$, then by uniqueness of solutions we would be able to glue $G$ over a family of open sets $\mc{U}$ and to prove the claim. Moreover, we claim that the symmetries obtained in Lemma \ref{lemma:symmetries} are still valid. This is all proved in the next step.
	
	\medskip
	
	\emph{Step 2: application of CGOs.} Without loss of generality we may assume that $\mu = \e_1$ (just apply a rotation in $\mathbb{R}^2$ that takes $\mu$ to $\e_1$). Then consider the (bi-infinite) geodesic $\gamma_\nu$ generated by $\nu$. We claim there exists a smooth compact manifold with boundary $\mc{T}_0 \Subset \mathbb{R} \times N_0^\circ$ such that a segment of $\gamma_\nu$ is non-tangential in $\mc{T}_0$ and $M \subset \mathbb{R} \times \mc{T}$. Indeed, if the component in the direction of the second component of $\mathbb{R}$ of $\nu$ is non-zero, then considering $[-T, T] \times N_1$ for some large $T$, and taking a slightly smaller $N_1 \Subset N_0^\circ$, $\gamma_\nu$ intersects $x_2 = T$ and $x_2 = -T$ transversely; smoothing out the corners will work. If $\nu \in S_xM$, then $\gamma_\nu$ is a non-tangential geodesic in $M_0$, then again taking $\mc{T}$ to be $[-T, T] \times M_0$ for large $T$ and smoothing out corners again will work; this time $\gamma_\nu$ is transversal to $\mathbb{R} \times \partial M_0$.

We our now in shape to apply \cite[Theorem 5.8 and Proposition 5.10]{cekic-17} with $\lambda = 0$ and $\mathbb{R} \times \mc{T}_0$. Thus, there are solutions $U_i$ for $i = 1, 2$, satifying $\Lie_{A_1, Q_1} U_1 = 0$ and $\Lie_{-A_2^*, Q_2^*} U_2 = 0$, of the form
	\begin{equation}\label{eq:gaussian-beams-form}
		U_1 = e^{h^{-1} x_1} (V_1 + R_1), \quad U_2 = e^{-h^{-1} x_1} (V_2 + R_2),
	\end{equation}
	where $h > 0$ is a small parameter, such that as $h \to 0$:
	\begin{equation}\label{eq:gaussian-beams-decay}
		\|R_i\|_{L^2} = o(1), \quad \|R_i\|_{H^1} = o(h^{-1}), \quad \|V_i\|_{L^2} = \mc{O}(1), \quad i = 1, 2,
	\end{equation}
	and the following concentration properties hold, writing $\gamma := \gamma_\nu$:
	\begin{align}\label{eq:gaussian-beams-concentration}
	\begin{split}
		\lim_{h \to 0} \int_{\{x_0\} \times \mc{T}_0} \Tr\big(V_1 V_2^*\big) \varphi \, d\vol_g &= \int_0^L \Tr(C_1 C_2^*) \varphi(\gamma(t))\, dt\\
		\lim_{h \to 0} h \int_{\{x_0\} \times \mc{T}_0} \Tr\big(\langle{\alpha, dV_2^*} \rangle V_1\big) \varphi \, d\vol_g &= -i\int_0^L \Tr(C_1 C_2^*) \varphi(\gamma(t))\, dt\\
		\lim_{h \to 0} h \int_{\{x_0\} \times \mc{T}_0} \Tr\big(\langle{\alpha, dV_1} \rangle V_2^*\big) \varphi \, d\vol_g &= i\int_0^L \Tr(C_1 C_2^*) \varphi(\gamma(t))\, dt,
	\end{split}
	\end{align}
	where $x_0 \in \mathbb{R}$ and $\varphi \in C^\infty(\{x_0\} \times \mc{T}_0)$ are arbitrary. Using the integral identity as in \eqref{eq:integral-identity-inside} with the solutions \eqref{eq:gaussian-beams-form}, we get two terms $T_1$ and $T_2$ as in \eqref{eq:T-1} and \eqref{eq:T-2}, respectively. By \eqref{eq:gaussian-beams-decay}, we again have $T_1 = \mc{O}(1)$ as $h \to 0$. For the term $T_2$, we get
	\begin{align*}
		T_2 &= \int_M \Tr\big(\langle{U_1 \cdot dU_2^*-  dU_1 \cdot U_2^*, A_1^* - A_2^*}\rangle\big)\, d\vol_g\\
		&= \mc{O}(1) + 2\int_M \Tr\left(h^{-1} (V_1 + R_1) (V_2^* + R_2^*) \widetilde{A}_1\right)\, d\vol_g\\ 
		&- \int_M \Tr \left(\langle{V_1 \cdot dV_2^* - dV_1 \cdot V_2^*, \widetilde{A}}\rangle\right) \, d\vol_g,
	\end{align*}
	where we recall $\widetilde{A} = A_2 - A_1$, and in the second line we used \eqref{eq:gaussian-beams-decay} to bound the remaining factors. Therefore we obtain
	\begin{align*}
		0 = \lim_{h \to 0} h(T_1 + T_2) &= 2 \int_{B(\mu + i\nu)} \Tr\big(C_1 C_2^* \widetilde{A}_1\big)\, dx_1 dt + 2i \int_{B(\mu + i\nu)} \Tr\big(C_1 C_2^* \widetilde{A}(\dot{\gamma}(t))\big)\, dx_1 dt\\
		&= 2\int_{B(\mu + i\nu)} \Tr\big(C_1 C_2^* (\widetilde{A}_1 + i \widetilde{A}_r)\big)\, dx_1 dr,
	\end{align*}
	where we we used the concentration properties \eqref{eq:gaussian-beams-concentration} in the first line, and wrote $r$ for the coordinate in the direction of $\gamma$ in the bicharacteristic leaf $B(\mu + i\nu)$. Notice that this is verbatim the same identity as in \eqref{eq:CGO-limit-inside-connections}, and so the remainder of the proof in Theorem \ref{thm:reduction-to-complex-x-ray-connection}, Steps 2 and 4 (Step 3 is no longer required), applies to give a solution $G$ as claimed in Step 1 (extending by $0$-homogeneity we obtain a solution over $\mc{L}$). It is left to notice that the symmetries in Lemma \ref{lemma:symmetries} are still valid in the same way as before, because $\mc{L}$ is invariant under the mentioned symmetries.
 \medskip

 \emph{Step 3: uniqueness for the connection.} Using the notation of Lemma \ref{lemma:connection-uniqueness}, we notice that $\xi^\pm(t) \in \mc{L}$ for all $t \in \mathbb{C}^\times$ if and only if $v \in S_xM$ is non-tangential. Indeed, by \eqref{eq:small-computation} we see that $d\pi_{\mathbb{R}^2} \nu^\pm(t) \neq 0$ if and only if $t \not \in \mathbb{S}^1$; and $\nu^\pm(t) = \pm v$ for $t \in \mathbb{S}^1$. So for non-tangential $v$ the same proof gives $\partial_{\bar{t}} G \equiv 0$; in fact, by the argument involving Liouville's theorem we get that $G$ is constant on the set $\mc{S}$ if $v$ is non-tangential. We define $G_0 := G(\partial_1 + i\partial_2) \in C^\infty(\mathbb{R}^2 \times M_0, \GL_r(\mathbb{C}))$ as before and show it satisfies $G_0^*A_1 = A_2$. Indeed, it satisfies \eqref{eq:complex-gauge-equivalence} in the same way, and \eqref{eq:v-gauge-equivalence} also holds for all non-tangential $v$. In fact, by openness of the set of non-tangential directions, and by linearity, once \eqref{eq:v-gauge-equivalence} holds for one such $v$, it holds for all $v \in T_xM$. Moreover, as before we get $G_0^*A_1(\partial_i) = A_2(\partial_i)$ for $i = 1, 2$, and thus $G_0^*A_1 = A_2$ on the set $\mathbb{R}^2 \times \Omega$, where
\[
    \Omega := \{x \in M_0 \mid \exists v \in S_xM_0 \,\,\mathrm{non\text{-}tangential}\},
\]
 with $G_0 = \id$ on the outer boundary of $M$. It is left to observe that the set $\Omega$ is open (by strict convexity of $\partial M_0$) and dense since it is of measure zero by \cite[Lemma 3.1]{salo-17}. Since $G_0^*A_1 = A_2$ over $\mathbb{R}^2 \times \Omega$, and both quantities extend smoothly to $\mathbb{R}^2 \times M_0$, by continuity we conclude that $G_0^* A_1 \equiv A_2$. Finally, if $A_1$ and $A_2$ are unitary, symmetries of $G_0$ give that $G_0$ takes values in $\mathrm{U}(r)$, completing the proof.
 \medskip

 \emph{Step 4: uniqueness for the potential.} It is now clear how to proceed to show that if $G := G_0$, then $G^*Q_1 = Q_2$. Indeed, one first constructs the map $F$ solving the complex transport equation as in Theorem \ref{thm:reduction-to-complex-x-ray-potential} for non-trapped bicharacteristics $\mc{L}$; this is done similarly to Steps 1 and 2 above and we skip the details. Then, one argues as in Lemma \ref{lemma:potential-uniqueness} and Step 3 above; again, we skip the details (now, the manifold $\widetilde{M}$ used in Lemma \ref{lemma:convexification} is constructed in Lemma \ref{lemma:topology-2}). 
 
 To show $G|_{\partial M} \equiv \id$, it is left to argue as in the proof of Theorem \ref{thm:main-theorem-simple}, Step 1, where instead of Lemma \ref{lemma:topology-1} we use Lemma \ref{lemma:topology-2}. This proves the main theorem. 
\end{proof}

\section{Complex ray transform and complex parallel transport}\label{sec:complex-ray-transform}

In this section we propose a new \emph{complex ray transform} and a new \emph{complex parallel transport} problem based on integration on bicharacteristic leaves, and study its basic properties. 
%(or \emph{complex scattering} problem)

\subsection{Complex ray transform} For the ease of presentation we assume $(M, g)$ is a simple manifold and we let $r \in \mathbb{Z}_{\geq 1}$; let $(N, g)$ be the extension of $(M, g)$ satisfying properties as in \S \ref{ssec:extensions}. Denote by $\partial_- SM$ the set of \emph{inward} pointing vectors, i.e.
\[
    \partial_-SM := \{(x, v) \in \partial SM \mid g_x(v, \nu) < 0\}.
\]
where $\nu$ is the outer normal to $\partial M$. Given $(x, v) \in \partial_- SM$, denote the bicharacteristic leaf at $(x, v)$ as
\[
    B(x, v) := B(v) := \mathbb{R} \times \gamma,
\]
where $\gamma :(-\infty, \infty) \to N$ is the geodesic generated by $(x, v)$ (more precisely, on the complement of $M$ it is the re-parametrisation of this geodesic, see \S \ref{ssec:extensions}), see Figure \ref{fig:complex-ray}. In what follows we will denote by $x_1$ and $r$ the $\mathbb{R}$ and the geodesic coordinate on $\gamma$, respectively, and by $z = x_1 + ir$ the complex coordinate on $B(x, v) \simeq \mathbb{C}$; recall $\partial_{\bar{z}} = \frac{1}{2}(\partial_{x_1} + i \partial_r)$. Given $m \in \mathbb{Z}_{\geq 0}$, denote by $\otimes_S^m T^*(\bullet)$ the vector bundle of \emph{symmetric $m$-tensors} on the manifold $\bullet$.

\begin{definition}\label{def:complex-ray}
    Let $T > 0$, $m \in \mathbb{Z}_{\geq 0}$ and let $A$ denote a smooth connection with compact support in $(-T, T) \times M^\circ$. Let $f \in C_{\comp}^\infty((-T, T) \times M^\circ, \otimes_S^m T^*(\mathbb{R} \times M^\circ) \otimes \mathbb{C}^{r \times r})$. The (\emph{non-Abelian}) \emph{complex ray transform} of $f$ at $(x, v) \in \partial_-SM$ is defined as
    \[
        \mc{C}f(x, v) := u_f|_{\partial([-T, T] \times [0, L])},    
    \]
    where $\gamma_{x, v}: [0, L] \to M$ is the maximal geodesic generated by $(x, v)$ and $u_f \in L^2_\tau(\mathbb{C}) \cap C^\infty(\mathbb{C})$ is the unique solution to the equation (given by Lemma \ref{lemma:del-bar-source})
    \begin{equation}\label{eq:del-bar-x-ray}
        \partial_{\bar{z}} u_f + A(z)(\partial_{\bar{z}}) u_f = f(z)(\otimes^m \partial_{\bar{z}}), \quad z = x_1 + ir, \quad z \in B(x,v) \simeq \mathbb{C}.
    \end{equation}
\end{definition}

When we want to emphasise the dependence on $m$ or the connection $A$ we will sometimes write $\mc{C}_m$ or $\mc{C}_A$, respectively, for the corresponding complex ray transform. Since $A$ and $f$ have compact supports, it is clear that $\mc{C}f$ does not depend on the extension $(N, g)$. Note that we restricted our attention to tensors since these have a natural extension to complex vectors such as $\partial_{\bar{z}}$; more generally, we could start with an arbitrary $f$ on $(-T, T) \times SM^\circ$ and then identify $f(z)$ with $f(x_1, x, v)$. Another reason to consider tensors is that the complex ray transform in the current form appears naturally in the \emph{non-linear} problem in \cite[Question 1.6]{cekic-17}, see also \S \ref{ssec:complex-parallel-transport} below.

Perhaps a more invariant way to define $\mc{C}$ is to use the holomorphic structure and consider restrictions to complements of large disks of $u_f$. For $d > 0$ denote the open disk of radius $d$ by $\mathbb{D}(d) \subset \mathbb{C}$ and by $\mathbb{D}^\times := \mathbb{D}(1) \setminus \{0\}$ the punctured unit disk at zero; also, denote the space of holomorphic matrix valued functions over an open set $\Omega \subset \mathbb{C}$ as $\mc{H}(\Omega, \mathbb{C}^{r \times r})$.

\begin{lemma}\label{lemma:complex-x-ray-equivalence}
    In the notation of Definition \ref{def:complex-ray}, there exists $d > 0$ large enough (depending only on $T$ and the diameter of $(M, g)$), such that for any $(x, v) \in \partial_- SM$, $\mc{C}f(x, v)$ is determined (and vice versa) by 
    \[
        \widetilde{\mc{C}}f (x, v) := u_f|_{\mathbb{C} \setminus \overline{\mathbb{D}}(d)} \circ T_d \in \mc{H}(\mathbb{D}^\times, \mathbb{C}^{r \times r}),
    \]
    where $T_d : \mathbb{D}^\times \to \mathbb{C} \setminus \overline{\mathbb{D}}(d)$ is the scaling map $T_d(z) := \frac{d}{z}$. Therefore, the complex ray transform can be seen as a map
    \[
        \widetilde{\mc{C}}: C_{\comp}^\infty((-T, T) \times M, \otimes_S^m T^*(\mathbb{R} \times M) \otimes \mathbb{C}^{r \times r}) \to C^\infty(\partial_{-}SM, \mc{H}(\mathbb{D}^\times, \mathbb{C}^{r \times r})).
    \]
\end{lemma}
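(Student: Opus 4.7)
The plan is to choose $d > 0$ so large that on every bicharacteristic leaf the relevant source rectangle is uniformly contained in $\mathbb{D}(d)$, and then to exploit that $u_f$ is holomorphic on a connected open set containing both $\partial R$ and $\mathbb{C} \setminus \overline{\mathbb{D}}(d)$. Concretely, for $(x,v) \in \partial_- SM$ with associated maximal geodesic $\gamma_{x,v} : [0,L] \to M$ (where $L$ is bounded above by $\operatorname{diam}(M,g)$), identifying $B(x,v) \simeq \mathbb{C}$ via $z = x_1 + ir$, the supports of $A$ and $f$ restricted to $B(x,v)$ lie in $R := [-T,T] \times [0,L]$ and are by hypothesis contained in its interior $R^\circ$. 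Taking any $d > \sqrt{T^2 + \operatorname{diam}(M,g)^2}$ ensures $R \subset \mathbb{D}(d)$ uniformly in $(x,v)$. Equation \eqref{eq:del-bar-x-ray} then reduces to $\partial_{\bar{z}} u_f = 0$ outside the compact supports, so picking a closed rectangle $\overline{R'} \subset R^\circ$ containing both supports yields a connected open set $\widetilde{\mc{U}} := \mathbb{C} \setminus \overline{R'}$ on which $u_f$ is holomorphic, which contains both $\partial R$ and $\mathbb{C} \setminus \overline{\mathbb{D}}(d)$.

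The inclusion $\widetilde{\mc{C}}f(x,v) \in \mc{H}(\mathbb{D}^\times, \mathbb{C}^{r \times r})$ follows at once, since $T_d(z) = d/z$ is a biholomorphism $\mathbb{D}^\times \to \mathbb{C} \setminus \overline{\mathbb{D}}(d)$ and the target lies in the domain of holomorphy of $u_f$. For the mutual determination, note that both $\mathbb{C} \setminus \overline{\mathbb{D}}(d)$ (a nonempty open set) and $\partial R$ (a piecewise smooth closed curve, whose accumulation points lie inside $\widetilde{\mc{U}}$) are subsets of the connected open set $\widetilde{\mc{U}}$. In the forward direction, knowing $\widetilde{\mc{C}}f$ recovers $u_f$ on $\mathbb{C} \setminus \overline{\mathbb{D}}(d)$, and by the identity principle this extends uniquely to all of $\widetilde{\mc{U}}$, in particular yielding $\mc{C}f = u_f|_{\partial R}$ by restriction. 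Conversely, two holomorphic functions on $\widetilde{\mc{U}}$ that agree on the curve $\partial R$ must coincide on the whole connected set $\widetilde{\mc{U}}$ (again by the identity principle applied to their difference), so $\mc{C}f$ determines $u_f$ on $\widetilde{\mc{U}}$, and hence $\widetilde{\mc{C}}f$ by restriction.

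The smoothness assertion follows from the smooth dependence of the leaf $B(x,v)$ on $(x,v) \in \partial_- SM$, the smoothness of the restrictions of $A$ and $f$ to the leaf, and the smooth parametric dependence of the solution map of Lemma \ref{lemma:del-bar-source}, which is a standard consequence of uniqueness and the a priori estimate (and was already invoked similarly in the proof of Lemma \ref{lemma:parametric-del-bar}); this yields smoothness of $\widetilde{\mc{C}}f$ with values in $\mc{H}(\mathbb{D}^\times, \mathbb{C}^{r \times r})$ equipped with the topology of uniform convergence on compact subsets. No step appears to present a serious obstacle; the only point deserving some care is the choice of $\widetilde{\mc{U}}$ so as to be simultaneously open, connected, a neighborhood of $\partial R$, and contained in the holomorphy locus of $u_f$, which is handled by the auxiliary rectangle $R'$ above.
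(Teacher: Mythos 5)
Your proposal is correct and follows essentially the same route as the paper: observe that $u_f$ is holomorphic outside the (compact) supports of $A$ and $f$ on the leaf, and then invoke the identity principle to pass between the values on $\partial([-T,T]\times[0,L])$ and the values near infinity. You merely spell out the details the paper leaves implicit (the explicit connected domain $\mathbb{C}\setminus\overline{R'}$, the uniform choice of $d$, and the smooth parametric dependence on $(x,v)$), all of which are handled correctly.
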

\begin{proof}
    Note that $u_f$ is holomorphic in the $z$-variable outside of the supports of $A$ and $f$. It therefore suffices to show that a holomorphic function $h$ is uniquely determined by its values on a smooth curve; however, this is obvious as holomorphic functions have a discrete set of zeroes.
\end{proof}

Next, we solve the transport problem associated to the complex ray transform.

\begin{lemma}\label{lemma:complex-transport-reduction}
    For any $f$ as in Definition \ref{def:complex-ray}, there exists $u \in C^\infty([-T, T] \times SM, \mathbb{C}^{r \times r})$ such that for any $(x_1, x, v) \in [-T, T] \times SM$
    \begin{equation}\label{eq:complex-transport-complex-x-ray}
        \big(\partial_{x_1} + iX(x, v) + A(x_1, x)(\partial_{x_1} + i v)\big) u(x_1, x, v) = f\big(x_1, x, \otimes^m(\partial_{x_1} + iv)\big).
    \end{equation}
    Moreover, $\mc{C}(f) \equiv 0$ if and only if $u \equiv 0$ in the component of $\partial ([-T, T] \times SM)$ of the complement of $\supp(f)$.
\end{lemma}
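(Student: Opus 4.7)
The strategy is to define $u$ leaf-by-leaf: on each bicharacteristic leaf $B(x, v) = \mathbb{R} \times \gamma_{x,v}$ solve the Del Bar equation with source determined by $f$, and show the resulting family of solutions assembles into a smooth function on $[-T, T] \times SM$ obeying \eqref{eq:complex-transport-complex-x-ray}. Concretely, for $(x, v) \in SM$ I identify $B(x, v)$ with $\mathbb{C}$ via $(x_1, r) \mapsto z = x_1 + i r$, with origin corresponding to $(0, x)$; let $u^{(x, v)} \in L^2_\tau(\mathbb{C}) \cap C^\infty(\mathbb{C})$ be the unique solution to \eqref{eq:del-bar-x-ray} provided by Lemma \ref{lemma:del-bar-source}; and set $u(x_1, x, v) := u^{(x, v)}(x_1)$. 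Smoothness in $(x_1, x, v)$ follows from the smooth parameter-dependence of the source $f$ and the connection $A$ combined with uniqueness in Lemma \ref{lemma:del-bar-source}: either by a Cauchy-operator Neumann series after rescaling the weight so that $A$ becomes a small perturbation, or by differentiating \eqref{eq:del-bar-x-ray} in parameters and invoking uniqueness in $L^2_\tau$ for the resulting equations (elliptic regularity then upgrades to $C^\infty$ in $z$ as well).

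To derive \eqref{eq:complex-transport-complex-x-ray}, I would exploit the compatibility of $u^{(x,v)}$ under the geodesic flow: since $(\gamma_{x,v}(s), \dot\gamma_{x,v}(s))$ generates the same leaf with origin shifted by $s$ in the geodesic parameter, uniqueness in Lemma \ref{lemma:del-bar-source} gives $u^{(\gamma_{x,v}(s), \dot\gamma_{x,v}(s))}(z) = u^{(x, v)}(z + i s)$, and hence
\[
u\bigl(x_1, \gamma_{x,v}(s), \dot\gamma_{x,v}(s)\bigr) = u^{(x, v)}(x_1 + i s).
\]
Differentiating at $s = 0$ and combining with the $\partial_{x_1}$-derivative yields $(\partial_{x_1} + i X) u = 2 \partial_{\bar{z}} u^{(x, v)}$ on the leaf. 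Identically, the complex tangent vector $\partial_{x_1} + i v$ at $(x_1, x)$ corresponds under the identification to $\partial_\alpha + i \partial_\beta = 2 \partial_{\bar z}$ on $B(x, v)$, so $A(\partial_{x_1} + i v) u$ and $f(\otimes^m (\partial_{x_1} + i v))$ translate into $A(\partial_{\bar z}) u^{(x, v)}$ and $f(\otimes^m \partial_{\bar z})$ respectively, up to the conventional scaling of multilinear evaluation on complex tangent vectors. Substituting into \eqref{eq:del-bar-x-ray} reproduces \eqref{eq:complex-transport-complex-x-ray}.

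For the final equivalence, note that for $(x, v) \in \partial_- SM$ the boundary $\partial([-T, T] \times [0, L])$ of the leaf segment embeds into $\partial([-T, T] \times SM)$ via $(x_1, r) \mapsto (x_1, \gamma_{x,v}(r), \dot\gamma_{x,v}(r))$, and along this image $u$ coincides with $u^{(x, v)}$. Thus $\mc{C}(f) \equiv 0$ is equivalent to $u$ vanishing on every such image. For leaves $B(x, v)$ disjoint from $\supp(f) \cup \supp(A)$, the equation \eqref{eq:del-bar-x-ray} reduces to $\partial_{\bar z} u^{(x, v)} = 0$ and uniqueness in Lemma \ref{lemma:del-bar-source} forces $u^{(x, v)} \equiv 0$, which accounts for the connected-component qualifier. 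The main obstacle I anticipate is the parameter-dependence of $u^{(x, v)}$ in the weighted $L^2_\tau$-framework; the remainder is a leafwise identification carried out via uniqueness.
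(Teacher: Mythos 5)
Your construction of $u$ and your derivation of \eqref{eq:complex-transport-complex-x-ray} follow the paper's route: a leafwise application of Lemma \ref{lemma:del-bar-source}, smooth parameter dependence plus uniqueness to glue, and the geodesic-flow compatibility $u^{(\gamma_{x,v}(s),\dot\gamma_{x,v}(s))}(z)=u^{(x,v)}(z+is)$ to convert $\partial_{\bar z}$ into $\tfrac12(\partial_{x_1}+iX)$. One small point you leave implicit is the normalisation: multiplying \eqref{eq:del-bar-x-ray} by $2$ produces $2f(\otimes^m\partial_{\bar z})=2^{1-m}f(\otimes^m(\partial_{x_1}+iv))$, so $u$ must be rescaled by $2^{m-1}$ to get \eqref{eq:complex-transport-complex-x-ray} on the nose; this is harmless but should be recorded (the paper divides by $2^{1-m}$ explicitly).

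There is, however, a genuine gap in your argument for the ``only if'' direction of the final equivalence. The lemma asserts that $\mc{C}f\equiv 0$ forces $u\equiv 0$ on the whole connected component of $[-T,T]\times SM\setminus\supp(f)$ containing $\partial([-T,T]\times SM)$ --- an open region with nonempty interior, not just the boundary of the cylinder. Your argument only yields vanishing (i) on the images of the rectangle boundaries, i.e.\ on $\partial([-T,T]\times SM)$ itself, and (ii) on leaves entirely disjoint from $\supp(f)\cup\supp(A)$. It says nothing about an interior point $(x_1,x,v)$ lying in the outer component whose leaf $B(x,v)$ \emph{does} meet the supports --- and such points are exactly what the lemma is used for later (Propositions \ref{prop:injectivity} and \ref{prop:dimension-two} invoke that $u$ is \emph{compactly supported}). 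The missing step is the holomorphicity argument of Lemma \ref{lemma:complex-x-ray-equivalence}: on each leaf, $u^{(x,v)}$ solves $\partial_{\bar z}u^{(x,v)}=0$ on the complement of the (compact) supports, so once it vanishes on the curve $\partial([-T,T]\times[0,L])$ the identity theorem forces it to vanish on the entire unbounded component of $\mathbb{C}\setminus(\supp A\cup\supp f)|_{B(x,v)}$; ranging over leaves this gives the vanishing on the outer component claimed in the statement. Relatedly, your reading of the ``connected-component qualifier'' is off: it is not there to single out leaves missing the supports, but to exclude the bounded ``holes'' of $\supp(f)$, where the holomorphic continuation of $u^{(x,v)}$ need not vanish.
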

\begin{proof}
    By Lemma \ref{lemma:del-bar-source} (and its parametric version), for any $(x_1, x, v) \in \mathbb{R} \times M$ there exists a neighbourhood $\mc{U} \ni (x_1, x, v)$ and $u_{\mc{U}} \in C^\infty(\mc{U} \times \mathbb{C}, \mathbb{C}^{r \times r})$ so that for each $(x_1', x', v') \in \mc{U}$, $u_{\mc{U}}(x_1', x', v', \bullet)$ solves \eqref{eq:del-bar-x-ray} (one has to divide by a factor of $2^{1 - m}$ to get a solution of \eqref{eq:complex-transport-complex-x-ray} in the end which we assume from now on). Covering $\mathbb{R} \times SM_e^\circ$ by open sets and using uniqueness of solutions (given by Lemma \ref{lemma:del-bar-source}), we obtain $\widetilde{u} \in C^\infty(\mathbb{R} \times SM_e^\circ \times \mathbb{C}, \mathbb{C}^{r \times r})$ satisfying \eqref{eq:del-bar-x-ray} similarly. Now given any $(x_1, x, v) \in [-T, T] \times SM$ define
    \[
        u(x_1, x, v) := \widetilde{u}(x_1, x, v, 0),
    \]
    and we obtain $u \in C^\infty([-T, T] \times SM, \mathbb{C}^{r \times r})$ satisfying \eqref{eq:complex-transport-complex-x-ray} (similarly to the proof of Theorem \ref{thm:main-theorem-gaussian-beams}, Step 1).

    If $\mc{C}f \equiv 0$, by Lemma \ref{lemma:complex-x-ray-equivalence} we know that $u \equiv 0$ for large $|z|$ in the bicharacteristic leaves, and in fact by holomorphicity $u \equiv 0$ as in the statement of the lemma. The converse follows obviously from \eqref{eq:complex-transport-complex-x-ray}. This completes the proof.
\end{proof}

\subsection{Kernel of the complex ray transform}

Let us now discuss the kernel of $\mc{C}$. We first recall some notation about tensors; for more details see \cite[Section 2]{Cekic-Lefeuvre-20}. The \emph{symmetrised covariant derivative} is defined for each $m \in \mathbb{Z}_{\geq 0}$ as
\[
    D: C^\infty(M, \otimes_S^m T^*M) \to C^\infty(M, \otimes_S^{m+1}T^*M), \quad D := \mathrm{Sym} \circ \nabla,
\]
where $\operatorname{Sym}$ is the symmetrisation operation on tensors and $\nabla$ is the Levi-Civita covariant derivative. The \emph{pullback} operation is defined as
\[
    \pi_m^*: \otimes_S^m T_x^*M \to C^\infty(S_xM), \quad \pi_m^*T := T_x(\otimes^m v), \quad x \in M, \quad v \in S_xM,
\]
and hence it also acts as $\pi_m^*: C^\infty(M, \otimes_S^m T^*M) \to C^\infty(SM)$; note that we may extend $\pi_m^*$ naturally to $x_1$-dependent tensors and obtain $x_1$-dependent functions. Then $D$ satisfies the following fundamental relation
\begin{equation}\label{eq:fundamental-relation}
    X \pi_m^* T = \pi_{m + 1}^* D T, \quad T \in C^\infty(M, \otimes_S^m T^*M),
\end{equation}
where $X$ is the geodesic vector field on $SM$. We will write $\nabla^{e \oplus g}$ and $D^{e \oplus g}$ for the Levi-Civita covariant and symmetrised covariant derivatives on $(\mathbb{R} \times M, e\oplus g)$, respectively. Next, any $u \in C^\infty(SM)$ has an expansion into \emph{Fourier modes}, $u = \sum_{i = 0}^\infty u_i$, where the sum converges in $L^2$, and for $i = 0, 1, \dotsc$, and $x \in M$ we have $u_i(x, \bullet) \in \Omega_i(x)$ where 
\[
    \Omega_i(x) = \{f \in C^\infty(S_xM) \mid -\Delta_{S_xM} f(v) = i(i + n - 3) f(v)\},
\]
where $-\Delta_{S_xM}$ denotes the Laplacian of $(S_xM, g|_{S_xM})$ (recall here that $\dim M = n - 1$). If $u_i \not \equiv 0$ for infinitely many $i$'s, define the \emph{degree} of $u$ to be $\deg u := \infty$; otherwise, set $\deg u := \max \{i \in \mathbb{Z}_{\geq 0} \mid u_i \not \equiv 0\}$. Next, denote by $\otimes^m_S T^*M|_{0-\Tr}$ the bundle of \emph{trace-free} symmetric tensors of degree $m$; we have $T \in \otimes^m_S|_{0-\Tr} T_x^*M$ if
\[
    \sum_i T(\e_i, \e_i, \dotsc) \equiv 0,
\]
where $(\e_i)_{i = 1}^{n - 1}$ is an orthonormal basis of $T_xM$. It is well-known that
\[
    \pi_m^*: \otimes^m_S|_{0-\Tr} T_x^*M \to \Omega_m(x), \quad m \in \mathbb{Z}_{\geq 0}
\]
is an isomorphism.

The notions of degree and pullback extend naturally to $x_1$-dependent symmetric tensors that are independent of $dx_1$. Moreover, for each $T \in C^\infty([-T, T] \times M, \otimes_S^{m}T^*(\mathbb{R} \times M))$ there is a \emph{unique} decomposition
\begin{equation}\label{eq:unique-decomposition}
    T = \sum_{j = 0}^m \operatorname{Sym}(\otimes^j dx_1 \otimes T_{m - j}),\quad  T_{m - j} \in  C^\infty([-T, T] \times M, \otimes_S^{m - j}|_{0-\Tr}T^*M).
\end{equation}
We are in shape to state a few basic properties of the complex ray transform. In what follows $\mc{C}_m$ will denote the complex ray transform for $A = 0$.

\begin{proposition}\label{prop:kernel}
    For all $m \in \mathbb{Z}_{\geq 0}$, we have the following inclusion
    \begin{equation}\label{eq:inclusion}
        \{D^{e \oplus g} p \mid p \in C_{\comp}^\infty((-T, T) \times M^\circ, \otimes_S^m T^*(\mathbb{R} \times M)\} \subset \ker \mc{C}_{m + 1}.
    \end{equation}
\end{proposition}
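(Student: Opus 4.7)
The plan is to exhibit, for every compactly supported symmetric $m$-tensor $p$, an explicit compactly supported leafwise solution to the $\overline{\partial}$-equation associated to $f := D^{e \oplus g} p$, and then appeal to uniqueness in Lemma \ref{lemma:del-bar-source}. The candidate is the tautological lift
\[
    u_0(x_1, x, v) := 2^{-m}\, p(x_1, x)\bigl(\otimes^m(\partial_{x_1} + iv)\bigr), \quad (x_1, x, v) \in [-T, T] \times SM,
\]
which is $C^\infty$ and compactly supported in $(-T, T) \times SM^\circ$ by the hypothesis on $p$.

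The core step is a direct computation verifying that $u_0$ satisfies the leafwise $\overline{\partial}$-equation \eqref{eq:del-bar-x-ray} (with $A = 0$). Fix $(x, v) \in \partial_- SM$, let $\gamma(r) := \gamma_{x, v}(r)$, and parameterize the bicharacteristic leaf $B(x, v) \cong \mathbb{C}$ by $z = x_1 + ir$. Since $\dot{\gamma}$ is Levi-Civita parallel along $\gamma$, differentiating along the leaf yields
\[
    (\partial_{x_1} + i \partial_r)\bigl[p(x_1, \gamma(r))\bigl(\otimes^m(\partial_{x_1} + i \dot{\gamma}(r))\bigr)\bigr] = \bigl(\partial_{x_1} p + i \nabla_{\dot{\gamma}} p\bigr)\bigl(\otimes^m(\partial_{x_1} + i\dot{\gamma})\bigr).
\]
As the Levi-Civita connection on $(\mathbb{R} \times M, e \oplus g)$ splits as a direct sum, complex linearity of $\nabla^{e \oplus g}$ identifies the right-hand side with $(\nabla^{e \oplus g}_{\partial_{x_1} + i\dot{\gamma}} p)(\otimes^m (\partial_{x_1} + i\dot{\gamma}))$. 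Since symmetrization acts trivially when all slots are evaluated at the same (complex) vector, this equals $(D^{e \oplus g} p)(\otimes^{m+1}(\partial_{x_1} + i\dot{\gamma})) = f(\otimes^{m+1}(2 \partial_{\bar{z}}))$ on the leaf. Combining with $\partial_{\bar{z}} = \tfrac{1}{2}(\partial_{x_1} + i \partial_r)$ and the normalization $2^{-m}$ in the definition of $u_0$ yields exactly
\[
    \partial_{\bar{z}} u_0 = f\bigl(\otimes^{m+1} \partial_{\bar{z}}\bigr), \quad z \in B(x, v).
\]

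To conclude, observe that on each leaf $B(x, v)$ the function $u_0$ is compactly supported, hence lies in $L^2_\tau(\mathbb{C}) \cap C^\infty(\mathbb{C})$ for every $\tau$. By the uniqueness statement in Lemma \ref{lemma:del-bar-source} (applied with $A \equiv 0$), $u_0|_{B(x, v)}$ coincides with the function $u_f$ appearing in Definition \ref{def:complex-ray}. Because $p$ is supported in $(-T, T) \times M^\circ$, $u_0$ vanishes identically in a neighbourhood of $\partial([-T, T] \times [0, L])$, whence $\mc{C}_{m+1} f(x, v) = u_f|_{\partial([-T, T] \times [0, L])} \equiv 0$, establishing the claimed inclusion.

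There is no real obstacle here: the only point requiring mild care is extending the standard tensorial identity $(DT)(\otimes^{m+1} w) = (\nabla_w T)(\otimes^m w)$ from real to the complex isotropic velocity $w = \partial_{x_1} + i v$, which is immediate by multilinearity. This is the complex analogue of the classical real computation showing that potential tensors lie in the kernel of the $X$-ray transform, with the usual integration along the geodesic replaced by the observation that the support of $u_0$ is compact in each leaf.
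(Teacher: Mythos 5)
Your proof is correct and follows essentially the same route as the paper: your tautological lift $u_0 = 2^{-m}p(\otimes^m(\partial_{x_1}+iv))$ is exactly the function $\sum_j i^{m-j}\pi_{m-j}^*p_{m-j}$ appearing in the paper's computation \eqref{eq:complex-fundamental-relation}, and your direct verification via parallelism of $\dot{\gamma}$ and complex multilinearity is the same identity the paper derives through the decomposition \eqref{eq:unique-decomposition} and the relation \eqref{eq:fundamental-relation}. The concluding step (compact support of $u_0$ plus uniqueness in $L^2_\tau$ from Lemma \ref{lemma:del-bar-source}) matches the paper's appeal to Lemma \ref{lemma:complex-transport-reduction}.
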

\begin{proof}
    Write $p = \sum_{j} \operatorname{Sym}(dx_1^j \otimes p_{m - j})$ according to the decomposition \eqref{eq:unique-decomposition}. For any $(x_1, x, v) \in (-T, T) \times SM$ we compute
    \begin{align}\label{eq:complex-fundamental-relation}
    \begin{split}
        &D^{e \oplus g}p \big(\otimes^{m+ 1} (\partial_{x_1} + iv)\big)\\ 
        &= \sum_j \operatorname{Sym}\big(\otimes^{j +1} dx_1 \otimes \partial_{x_1} p_{m - j} + \otimes^j dx_1 \otimes \nabla p_{m - j}\big) \big(\otimes^{m+ 1} (\partial_{x_1} + iv)\big)\\
        &= \sum_j i^{m - j} \pi_{m - j}^* (\partial_{x_1} p_{m - j})(x_1, x, v) + \sum_j i^{m + 1 - j} \pi_{m + 1 - j}^*(D p_{m - j})(x_1, x, v)\\
        &= (\partial_{x_1} + i X) \big(\sum_j i^{m - j} \pi_{m - j}^* p_{m - j}\big)(x_1, x, v),
    \end{split}
    \end{align}
    where in the first line we used the formula $\nabla^{e \oplus g}T = dx_1 \otimes \partial_{x_1} T + \nabla T$ valid for arbitrary tensors $T$, while in the last line we used \eqref{eq:fundamental-relation}. Therefore by Lemma \ref{lemma:complex-transport-reduction} the required inclusion holds.
\end{proof}

The following extends naturally the injectivity property of the $X$-ray transform on tensors.

\begin{definition}\label{def:complex-ray-injective}
    We say that the complex ray transform $\mc{C}_{m + 1}$ is \emph{injective} if equality holds in \eqref{eq:inclusion}.
\end{definition}

\subsection{The non-linear problem: complex parallel transport}\label{ssec:complex-parallel-transport} Clearly, the complex ray transform depends linearly on $f$. We define the \emph{non-linear} counterpart of the above injectivity problem in the following. Let us first introduce the group of holomorphic invertible matrices in the complex plane
\[
    \mc{G} := \{H \in C^\infty(\mathbb{C}, \GL_{r}(\mathbb{C})) \mid \partial_{\bar{z}} H \equiv 0\}.
\]
We are now in shape to give the definition of complex parallel transport.

\begin{definition}\label{def:complex-parallel-transport-0}
    Let $A$ be a smooth connection with compact support in $(-T, T) \times M^\circ$, and let $(x, v)$ be an incoming vector generating a maximal geodesic $\gamma$ in $(M, g)$ of length $L$. Let $D := [-T, T] \times [0, L]$. By \cite[Lemma 5.8]{nakamura-uhlmann-02}, there is smooth invertible matrix function $U_A$ on $\mathbb{C}$ such that $\partial_{\bar{z}} U_A + A(\partial_{\bar{z}}) U_A = 0$. Define the \emph{complex parallel transport} at $(x, v)$
    \[
         \mc{P}_A(x, v) := U_A|_{\partial D} \mod \mc{G},
    \]
    where by modulo $\mc{G}$ we mean the matrix function $U_A|_{\partial D}$ taken modulo the action by right multiplication by elements of $\mc{G}$.
\end{definition}

We first note $\mc{P}_A(x, v)$ is well-defined: indeed any other solution $U_A'$ to $\partial_{\bar{z}} U_A' + A(\partial_{\bar{z}}) U_A' = 0$ satisfies $U_A' \equiv U_A \mod \mc{G}$. 

\begin{remark}\rm We make a few additional remarks about $\mc{P}_A(x, v)$ and relation to Cauchy data. Any solution $u$ to 
    \begin{equation}\label{eq:XYZ}
        \partial_{\bar{z}} u + A (\partial_{\bar{z}}) u = 0,\, z \in D,\quad u|_{\partial D} = F \in C^\infty(\partial D, \mathbb{C}^{r\times r})
    \end{equation} 
    has to satisfy the following equation
    \[
        \partial_{\bar{z}} w = 0, \quad w|_{\partial D} = U_A^{-1}F|_{\partial D},% \quad w \in C^\infty(D, \mathbb{C}^{r \times r}),
    \]
    where $w := U_A^{-1}F$. Therefore the space $\mathrm{dom}(A)(x, v)$ of boundary values $F$ for which \eqref{eq:XYZ} has a (unique) solution is given by applying $U_A$ to the space of boundary values of holomorphic function on $D$. One can easily check that $\mathrm{dom}(A)(x, v)$ is independent of the choice of $U_A$. Also, it can be checked that $u$ is invertible on $D$ if and only if the (topological) degree of $\det F|_{\partial D}$ is equal to zero. 
\end{remark}

We next define what it means for two connections $A$ and $B$ to have equal parallel transports; afterwards we will see that this is the same as $\mc{P}_A(x, v) = \mc{P}_B(x, v)$.

\begin{definition}\label{def:complex-parallel-transport}
Let $A$ and $B$ be two connections as in Definition \ref{def:complex-parallel-transport-0}. Say that $A$ and $B$ have equal \emph{complex parallel transport} at $(x, v)$ if there exists $G \in C^\infty(\mathbb{C}, \mathbb{C}^{r \times r})$ such that
\begin{equation}\label{eq:complex-non-linear}
    \partial_{\bar{z}} G + A (\partial_{\bar{z}}) G(z) - G(z) B(\partial_{\bar{z}}) = 0, \quad G|_{\partial([-T, T] \times \gamma)} = \id,
\end{equation}
where $\gamma$ is the maximal geodesic in $(M, g)$ generated by $(x, v)$.
\end{definition}

It follows from the definition that if $A$ and $B$ have equal complex parallel transports, and $G$ is as in Definition \ref{def:complex-parallel-transport}, then the following relation between the complex ray transforms holds:
\[
    \mc{C}_A = \mc{C}_B G^{-1}. 
\]    
We now have:

\begin{proposition}
    Two connections $A$ and $B$ as in Definition \ref{def:complex-parallel-transport} have equal complex parallel transports at $(x, v)$ if and only if $\mc{P}_A(x, v) = \mc{P}_B(x, v)$.
\end{proposition}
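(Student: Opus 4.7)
The plan is to exploit the torsor structure behind $\mc{P}_A$; once this is recognised, the proposition becomes a tautology up to the single issue of invertibility of $G$.

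First I would establish the following torsor fact: the set $\mc{S}_A$ of smooth invertible solutions $V \in C^\infty(\mathbb{C}, \GL_r(\mathbb{C}))$ of $\partial_{\bar{z}}V + A(\partial_{\bar{z}})V = 0$ is non-empty by \cite[Lemma 5.8]{nakamura-uhlmann-02} and carries a free transitive right $\mc{G}$-action: indeed, for any $V_1, V_2 \in \mc{S}_A$ the quotient $V_1^{-1}V_2$ is smooth, invertible, and satisfies $\partial_{\bar{z}}(V_1^{-1}V_2) = 0$, hence lies in $\mc{G}$. Consequently $\mc{P}_A(x,v) = U_A|_{\partial D} \bmod \mc{G}$ is well-defined independently of $U_A \in \mc{S}_A$, and $\mc{P}_A(x,v) = \mc{P}_B(x,v)$ is equivalent to the existence of $H \in \mc{G}$ with $U_A|_{\partial D} = U_B|_{\partial D} \cdot H|_{\partial D}$.

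For the $(\Leftarrow)$ direction, given such $H$, I would pass to the modified invertible $A$-solution $\widetilde{U}_A := U_A H^{-1} \in \mc{S}_A$, which satisfies $\widetilde{U}_A|_{\partial D} = U_B|_{\partial D}$, and set $G := \widetilde{U}_A U_B^{-1}$. This is smooth and invertible on $\mathbb{C}$, with $G|_{\partial D} = \id$; differentiating $U_B U_B^{-1} = \id$ yields $\partial_{\bar{z}} U_B^{-1} = U_B^{-1} B(\partial_{\bar{z}})$, and a direct computation produces $\partial_{\bar{z}} G + A(\partial_{\bar{z}})G - G B(\partial_{\bar{z}}) = 0$, so $G$ witnesses equal complex parallel transport. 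Conversely, for the $(\Rightarrow)$ direction I take $G$ as in Definition \ref{def:complex-parallel-transport} and form $\widetilde{U}_A := GU_B$; the Leibniz rule combined with the equations for $G$ and $U_B$ gives $\partial_{\bar{z}}\widetilde{U}_A + A(\partial_{\bar{z}})\widetilde{U}_A = 0$, and $\widetilde{U}_A|_{\partial D} = U_B|_{\partial D}$ by construction. Granting that $\widetilde{U}_A$ is invertible we have $\widetilde{U}_A \in \mc{S}_A$, and the torsor property produces $H := U_A^{-1}\widetilde{U}_A \in \mc{G}$; restricting to $\partial D$ yields $U_B|_{\partial D} = U_A|_{\partial D} \cdot H|_{\partial D}$, so $\mc{P}_A(x,v) = \mc{P}_B(x,v)$.

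The nontrivial step, and the main obstacle, is establishing invertibility of $G$ (equivalently of $\widetilde{U}_A = GU_B$). This invertibility is implicit in Definition \ref{def:complex-parallel-transport} and is in fact already tacitly invoked by the identity $\mc{C}_A = \mc{C}_B G^{-1}$ stated immediately after that definition. The cleanest resolution is to strengthen the definition to demand $G \in C^\infty(\mathbb{C}, \GL_r(\mathbb{C}))$. Intrinsically, one can also argue as follows: outside $D$ the equation collapses to $\partial_{\bar{z}} G = 0$, so $G$ is holomorphic on $\mathbb{C} \setminus D$ with $G|_{\partial D} = \id$; imposing the natural decay $G - \id \to 0$ at infinity (consistent with the $L^2_\tau$ framework of Lemma \ref{lemma:del-bar-source}), the identity theorem after the change of variable $z \mapsto 1/z$ forces $G \equiv \id$ on $\mathbb{C} \setminus D$. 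The interior invertibility then follows by uniqueness for the linear PDE $\partial_{\bar{z}} G + AG - GB = 0$ with prescribed boundary value $\id$ on $\partial D$ and value $\id$ on $\mathbb{C}\setminus D$: applying $\partial_z$ yields a principally Laplacian diagonal elliptic system to which the unique continuation principle applies (as in Step 2 of the proof of Theorem \ref{thm:reduction-to-complex-x-ray-connection}), forcing $G$ to coincide with any manifestly invertible candidate (e.g.\ the $U_A U_B^{-1}$ constructed a posteriori in the $(\Leftarrow)$ direction).
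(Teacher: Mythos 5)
Your argument follows essentially the same route as the paper: both directions come down to the same algebra ($G := U_A H^{-1} U_B^{-1}$ for sufficiency, $G U_B$ being an $A$-solution with the same boundary trace as $U_B$ for necessity), and you correctly isolate the invertibility of $G$ as the one genuinely nontrivial point. However, neither of your two proposed resolutions of that point actually closes it. Strengthening Definition \ref{def:complex-parallel-transport} to demand $G \in C^\infty(\mathbb{C}, \GL_r(\mathbb{C}))$ changes the statement rather than proving it. The second resolution is circular: the "manifestly invertible candidate" you invoke is $U_A H^{-1} U_B^{-1}$ with $H \in \mc{G}$ agreeing with $U_B^{-1}U_A$ near infinity, and such an $H$ exists precisely when $\mc{P}_A(x,v) = \mc{P}_B(x,v)$ — which is the conclusion of the direction you are trying to prove. (A small side remark: you do not need to impose decay of $G - \id$ at infinity to get $G \equiv \id$ outside $D$; since $A$ and $B$ are supported in a compact $K \Subset D^\circ$, $G - \id$ is holomorphic on the connected open set $\mathbb{C}\setminus K$ and vanishes on the curve $\partial D$, so the identity theorem already forces $G \equiv \id$ there.)

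The non-circular way to get invertibility is the paper's Proposition \ref{prop:G-invertible}: by Jacobi's formula, \eqref{eq:complex-non-linear} implies the scalar equation $\partial_{\bar z}\det G + \Tr(A - B)(\partial_{\bar z})\,\det G = 0$ with $\det G \equiv 1$ outside the supports. Solving $\partial_{\bar z}\Psi + \Tr(A-B)(\partial_{\bar z}) = 0$ via Lemma \ref{lemma:del-bar-source} shows $e^{-\Psi}\det G$ is entire, and the argument principle on a large disk (on whose boundary $\det G \equiv 1$) shows $\det G$ never vanishes. With that lemma in hand, your torsor argument goes through verbatim: $\widetilde U_A := G U_B$ is then an invertible $A$-solution, $H := U_A^{-1}\widetilde U_A$ lies in $\mc{G}$, and the rest of your proof is correct.
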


\begin{proof}
   If $A$ and $B$ are have equal complex parallel transports at $(x, v)$, and $G$ is as in Definition \ref{def:complex-parallel-transport}, then a direct computation gives that we may take $U_B = G^{-1}U_A$. Since $G|_{\partial D} = \id$, we get $\mc{P}_A(x, v) \equiv U_A|_{\partial D} \equiv \mc{P}_B(x, v)  \mod \mc{G} $. 
    
   Conversely, assume $\mc{P}_A(x, v) = \mc{P}_B(x, v)$. By definition there is $H \in \mc{G}$ such which agrees with $U_B^{-1}U_A$ on $\partial D$; moreover, since $U_B^{-1}U_A$ is holomorphic outside $D$, $H$ actually agrees with $U_B^{-1}U_A$ there. We now set $G := U_A H^{-1} U_B^{-1}$, so $G \equiv \id$ outside $D$, and a direct computation shows that $G$ satisfies \eqref{eq:complex-non-linear}. This completes the proof.    
\end{proof}

\begin{remark}\label{remark}\rm
    Alternatively, we may define $\mc{P}_A(x, v) := \mathrm{dom}(A)(x, v)$ as the set of Cauchy data. Then, it is easy to see that $\mc{P}_A(x, v) = \mc{P}_B(x, v)$ is again equivalent to $A$ and $B$ having equal complex parallel transports. Still alternatively, we may define $\mc{P}_A(x, v)$ as a map between suitable subspaces of $C^\infty([-T, T] \times \{0\})$ and $C^\infty([-T, T] \times \{L\})$ defined by solutions of \eqref{eq:XYZ}. It would be interesting to compare the domains of definition of $\mc{P}_A(x, v)$ and $\mc{P}_B(x, v)$ for two different connections $A$ and $B$.
\end{remark}

Observe that the complex parallel transport is the analogue of the usual parallel transport problem for connections (see \cite{paternain-13} for instance) in the following sense. Indeed, if $P_A(t), P_B(t) \in \mathbb{C}^{r \times r}$ denote parallel transport matrices along a curve $\gamma: [0, L] \to M$ (from $\gamma(0)$ to $\gamma(t)$), then $G(t) := P_A(t) P_B(t)^{-1}$ satisfies
\begin{equation}\label{eq:real-non-linear}
    \dot{G}(t) + A(\partial_t) G(t) - G(t) B(\partial_t) = 0.
\end{equation}

If $P_A(L) = P_B(L)$ where $L$ is the length of $\gamma$, note that \eqref{eq:real-non-linear} implies that $A$ and $B$ are gauge-equivalent along $\gamma$ with an isomorphism fixing the boundary of $\gamma$. The analogous fact does not hold in dimension $\dim M = 1$ for the complex parallel transport:

\begin{proposition}\label{prop:counterexample}
    Assume $(M, g)$ is identified with a closed interval and equipped with the Euclidean metric. Then, there are examples of $A$ and $B$ as in Definition \ref{def:complex-parallel-transport} which are \emph{not} gauge equivalent but have the same complex parallel transport; we may moreover take $A$ and $B$ to be unitary.
\end{proposition}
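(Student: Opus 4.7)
The plan is to exploit the fact that \eqref{eq:complex-non-linear} only constrains the $\partial_{\bar z}$-component of $A$ and $B$, whereas genuine gauge equivalence $dG + AG - GB = 0$ also requires matching in the $\partial_z$-direction. On the 2D extension $\mathbb{R}\times M = \mathbb{R}\times[0,L]$ this slack lets one manufacture a unitary connection $A$ with nonzero curvature that shares its complex parallel transport with the trivial connection $B = 0$. Since curvature is a gauge invariant, this rules out gauge equivalence.

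Concretely, I take $r = 2$, set $D := [-T,T]\times[0,L]$, and fix a non-zero $\phi \in C^\infty_\comp(D^\circ,\mathbb{C})$ (so automatically $\partial_{\bar z}\phi \not\equiv 0$, since any compactly supported entire function vanishes). Put
\[
G := \begin{pmatrix} 1 & \phi \\ 0 & 1 \end{pmatrix},
\]
extended by the identity outside $\supp\phi$, yielding a smooth element of $C^\infty(\mathbb{C},\GL_2(\mathbb{C}))$ with $G|_{\partial D} = \id$. Declare $A(\partial_{\bar z}) := -\partial_{\bar z} G \cdot G^{-1}$ and $A(\partial_z) := -A(\partial_{\bar z})^*$, and define $A = A_1 dx_1 + A_2 dr$ via $A_1 := A(\partial_z) + A(\partial_{\bar z})$ and $A_2 := i(A(\partial_z) - A(\partial_{\bar z}))$. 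The second declaration is exactly what forces $A_1$ and $A_2$ to be skew-Hermitian, so $A$ is a unitary connection supported inside $D^\circ$. Setting $B := 0$, the witness equation $\partial_{\bar z} G + A(\partial_{\bar z}) G = 0$ holds by construction, so Definition \ref{def:complex-parallel-transport} gives $\mc{P}_A = \mc{P}_B$ at both points of $\partial_- SM$.

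It remains to show $A$ is not gauge equivalent to $B = 0$, which I would do via the curvature $F_A = (\partial_z A(\partial_{\bar z}) - \partial_{\bar z} A(\partial_z) + [A(\partial_z), A(\partial_{\bar z})])\, dz\wedge d\bar z$. The two derivative terms are purely off-diagonal because $A(\partial_{\bar z})$ and $A(\partial_z)$ come out strictly upper and lower triangular respectively, while a one-line matrix multiplication gives
\[
[A(\partial_z), A(\partial_{\bar z})] = |\partial_{\bar z}\phi|^2 \operatorname{diag}(1, -1).
\]
Hence the diagonal of $F_A$ equals $|\partial_{\bar z}\phi|^2 \operatorname{diag}(1, -1)$, nonzero wherever $\partial_{\bar z}\phi \neq 0$. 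Since curvature transforms tensorially under any gauge transformation (whether $\GL_2(\mathbb{C})$- or $\mathrm{U}(2)$-valued) and $F_B \equiv 0$, this forbids gauge equivalence.

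There is no serious technical obstacle—the whole argument is an explicit construction. The one conceptual point, which also explains why the analogue fails in the usual real parallel transport problem on $M$, is that the diagonal part of $[A(\partial_z), A(\partial_{\bar z})]$ is invisible to \eqref{eq:complex-non-linear} (it only sees the $\partial_{\bar z}$-component of $A$) yet survives as genuine curvature in the full $d_A$-structure.
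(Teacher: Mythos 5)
Your construction is correct and rests on exactly the same mechanism as the paper's proof: the complex parallel transport only sees $A(\partial_{\bar z})$, so one prescribes $A(\partial_{\bar z}) := -\partial_{\bar z}G\cdot G^{-1}$ for a compactly supported perturbation $G$ of the identity and lets the $\partial_z$-component (forced to be $-A(\partial_{\bar z})^*$ by unitarity) carry curvature. The only real difference is in how non-gauge-equivalence is certified: the paper arranges $A$ to agree on a small open set with an arbitrary seed connection $A_0$ and leaves the conclusion implicit, whereas you exhibit the obstruction explicitly via the diagonal of the curvature, $[A(\partial_z),A(\partial_{\bar z})]=|\partial_{\bar z}\phi|^2\,\mathrm{diag}(1,-1)$, which is a cleaner and fully self-contained way to finish (at the mild cost of requiring $r\ge 2$, whereas the paper's version produces examples for every $r\ge 1$).
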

\begin{proof}
    For simplicity, we assume $B = 0$ and give an example of a connection which is not gauge equivalent to $B$ but has the same complex parallel transport. Let $A_0$ be an arbitrary smooth connection with compact support over $(-T, T) \times M^\circ$ (on the product vector bundle with fibre $\mathbb{C}^r$). By \cite[Lemma 5.8]{nakamura-uhlmann-02}, we may solve
    \[
        \partial_{\bar{z}} G_0 + A_0(\partial_{\bar{z}}) G_0 = 0, \quad G_0 \in C^\infty(\mathbb{C}, \GL_r(\mathbb{C})).
    \]
    Let $z_0 \in \mathbb{C}$ be an arbitrary point; we may without loss of generality assume $G_0(z_0) = \id$. Let $U_0 \ni z_0$ be a small open neighbourhood where $G_0 = e^{\Psi_0}$ for some $\Psi_0 \in C^\infty(U_0, \mathbb{C}^{r \times r})$. Extend $\Psi_0$ smoothly to a compactly supported function $\Psi \in C^\infty_{\comp}((-T, T) \times M^\circ, \mathbb{C}^{r \times r})$. Set $G:= e^{\Psi}$ and choose $A$ with compact support in $(-T, T) \times M^\circ$ such that $A(\partial_{\bar{z}}) := -\partial_{\bar{z}} G \cdot G^{-1}$ and $A|_{U_0} = A_0$. It follows by definition that $A$ and $B = 0$ have the same complex parallel transport. Since $A_0$ was arbitrary this proves the first claim; the second claim follows by noting that we may simply set $A(\partial_z) := -A(\partial_{\bar{z}})^*$ to get that $A$ is additionally unitary.    
\end{proof}

We end this short paragraph by noting that \eqref{eq:complex-non-linear} still implies that $G$ is invertible (as for the usual parallel transport problem).

\begin{proposition}\label{prop:G-invertible}
    In the notation of Definition \ref{def:complex-parallel-transport}, the matrix function $G$ is invertible. 
\end{proposition}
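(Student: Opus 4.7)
The plan is to reduce invertibility of $G$ to non-vanishing of an entire scalar holomorphic function via a conjugation trick, followed by an application of the argument principle. First I would invoke Lemma \ref{lemma:del-bar-source} combined with the exponentiation procedure used in the preceding proposition to construct smooth invertible matrix solutions $U_A, U_B \in C^\infty(\mathbb{C}, \GL_r(\mathbb{C}))$ of the homogeneous equations $\partial_{\bar z} U_A + A(\partial_{\bar z}) U_A = 0$ and $\partial_{\bar z} U_B + B(\partial_{\bar z}) U_B = 0$, normalised so that $U_A, U_B \to \id$ at infinity. Setting $H := U_A^{-1} G U_B$ and using the three defining PDEs, a direct calculation gives $\partial_{\bar z} H = 0$, so $H$ is an entire holomorphic matrix function, and $G = U_A H U_B^{-1}$ is invertible at a point precisely when $\det H$ is nonzero there.

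The boundary condition $G|_{\partial D} = \id$ translates to $H|_{\partial D} = (U_A^{-1} U_B)|_{\partial D}$, which is invertible on $\partial D$. The central step is to apply the argument principle to the entire function $\det H$ on the rectangle $D$: since $U_A^{-1} U_B$ is smooth and everywhere invertible on the simply connected plane $\mathbb{C}$, the scalar $\det(U_A^{-1} U_B) : \mathbb{C} \to \mathbb{C}^\times$ admits a global smooth logarithm, so the winding number of $\det H|_{\partial D}$ around $0$ vanishes. By the argument principle, the number of zeros of $\det H$ in $D$ equals this winding number, namely zero, so $G$ is invertible on all of $D$.

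For $\mathbb{C} \setminus D$ both $A$ and $B$ vanish, so $G$ is holomorphic there; combined with the natural normalisation $G \to \id$ at infinity inherited from the Nakamura-Uhlmann framework underlying Definition \ref{def:complex-parallel-transport-0}, the maximum principle on the exterior domain forces $G \equiv \id$ on $\mathbb{C} \setminus D$. Equivalently, $\det H \to 1$ at infinity, so $\det H$ is a bounded entire function and by Liouville's theorem $\det H \equiv 1$ globally, yielding invertibility of $G$ on all of $\mathbb{C}$. The most delicate point I expect in this plan is justifying the precise behaviour of $G$ at infinity (and hence the global normalisation of $U_A$ and $U_B$); once that is tracked, the winding-number computation via the argument principle is the clean core of the argument.
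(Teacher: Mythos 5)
Your argument is correct, but it takes a different route from the paper. The paper applies Jacobi's formula directly to \eqref{eq:complex-non-linear} to obtain a \emph{scalar} equation $\partial_{\bar z}\det G + \Tr(A-B)(\partial_{\bar z})\det G = 0$, solves the scalar inhomogeneous $\overline\partial$-equation $\partial_{\bar z}\Psi = -\Tr(A-B)(\partial_{\bar z})$ via Lemma \ref{lemma:del-bar-source}, and applies the argument principle to the holomorphic function $e^{-\Psi}\det G$ on a large disk where $\det G\equiv 1$. You instead conjugate the full matrix equation, setting $H = U_A^{-1} G U_B$ with $U_A, U_B$ invertible solutions of the homogeneous matrix equations, so that $H$ is entire and the non-invertibility locus of $G$ is the zero set of $\det H$; the winding-number computation for $\det H|_{\partial D} = \det(U_A^{-1}U_B)|_{\partial D}$ is then clean and correct. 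The trade-off: the paper's route needs only scalar solvability of the $\overline\partial$-equation (essentially the Cauchy transform), whereas yours needs the existence of globally \emph{invertible} matrix solutions $U_A, U_B$ — that is \cite[Lemma 5.8]{nakamura-uhlmann-02}, a genuinely harder input, and it is \emph{not} obtained from Lemma \ref{lemma:del-bar-source} by "exponentiation" as you suggest: for $r>1$ the matrices do not commute and $e^{\Psi}$ with $\partial_{\bar z}\Psi = -A(\partial_{\bar z})$ does not solve the homogeneous equation. Since Definition \ref{def:complex-parallel-transport-0} already invokes that lemma, this is a citation fix rather than a gap.

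Two smaller points. First, the exterior step should not go through "normalisation at infinity plus the maximum principle": Definition \ref{def:complex-parallel-transport} imposes no condition on $G$ at infinity, and a holomorphic function on an exterior domain with constant boundary values need not be constant in general. The correct and shorter argument is the identity theorem: $A$ and $B$ vanish on a neighbourhood of $\partial D$, so $G$ is holomorphic there and $G-\id$ vanishes on the curve $\partial D$, forcing $G\equiv\id$ on the unbounded component of the complement of the supports (this is exactly how the paper justifies $\det G\equiv 1$ off $[-T,T]\times\gamma$, via Lemma \ref{lemma:complex-x-ray-equivalence}). Second, your closing Liouville step is redundant once the argument principle has been applied on $D$ and the identity theorem has handled the exterior, and it relies on an unverified decay $U_A, U_B\to\id$ at infinity; I would drop it.
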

\begin{proof}
    By Jacobi's formula we deduce from \eqref{eq:complex-non-linear} that
    \[
        \partial_{\bar{z}} \det G + \Tr(A - B)(\partial_{\bar{z}}) \det G = 0, \quad \det G|_{\mathbb{C} \setminus [-T, T] \times \gamma} \equiv 1.
    \]
    where the latter condition follows from Lemma \ref{lemma:complex-x-ray-equivalence}. By Lemma \ref{lemma:del-bar-source} there is a $\Psi \in C^\infty(\mathbb{C})$ such that
    \[
        \partial_{\bar{z}} \Psi + \Tr(A - B)(\partial_{\bar{z}}) = 0.
    \]
    It follows that $\partial_{\bar{z}} (e^{-\Psi} \det(G)) \equiv 0$; by the argument principle applied to a disk $D$ with sufficiently large radius (so that $\det G = 1$ on $\partial D$), we conclude that $\det G$ has no zeroes in $D$. This completes the proof.
\end{proof}

\subsection{Injectivity for the linear and non-linear problems} 
Recall that the $X$-ray transform on $(M, g)$ is called \emph{injective} if the transport equation on $SM$
\[
    Xu = \pi_m^*f, \quad u|_{\partial SM} = 0, \quad u \in C^\infty(SM),
\]
implies that $\deg u \leq m - 1$ and so by \eqref{eq:fundamental-relation} if $u = \pi_{m - 1}^*T$ we have $f \equiv DT$ with $T|_{\partial M} = 0$. Next, we show that for $r = 1$ the complex ray transform reduces to the usual $X$-ray transform on the transversal $(M, g)$.

\begin{proposition}\label{prop:injectivity}
    Assume $r = 1$ and $\mc{C}f \equiv 0$. Then: 
    \begin{itemize}[itemsep=5pt]
        \item[1.] Assume $A = 0$. If $m = 0$, then $f \equiv 0$; if $m = 1$, then $\exists p \in C_\comp^\infty((-T, T) \times M^\circ)$ such that $f = dp$;
        
        \item[2.] If $f = -A$, then $\exists p \in C^\infty((-T, T) \times M^\circ)$ such that $A = p^{-1} dp$, where $p - 1$ has compact support. As a consequence, if two connections $A$ and $B$ have same complex parallel transports, then they are gauge equivalent. 
    \end{itemize}
\end{proposition}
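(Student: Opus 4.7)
The plan is to reduce both parts of the proposition to leafwise moment identities along bicharacteristic leaves and then invoke the $X$-ray rigidity of the simple manifold $(M, g)$. First I would argue that $\mc{C}f \equiv 0$ forces the solution $u_f$ of \eqref{eq:del-bar-x-ray} to have compact support on each bicharacteristic leaf $B(x, v) \simeq \mathbb{C}$: indeed $u_f \in L^2_\tau(\mathbb{C})$ by Lemma \ref{lemma:del-bar-source}, while Lemma \ref{lemma:complex-x-ray-equivalence} gives $u_f|_{\partial D} \equiv 0$, and outside $D$ the $\overline{\partial}$-equation is homogeneous, so holomorphicity together with the $L^2_\tau$-decay forces $u_f \equiv 0$ there. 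Pairing $\partial_{\bar z} u_f = (f - A u_f)(\otimes^m \partial_{\bar z})$ against $z^k$ and integrating by parts then yields
\[
    \int_{B(x, v)} z^k\, (f - A u_f)(\otimes^m \partial_{\bar z})\, dz\, d\bar z \;=\; 0 \qquad \text{for every } k \in \mathbb{Z}_{\geq 0}.
\]

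For Part 1 ($A = 0$), I would expand $(x_1 + ir)^k$, separate real and imaginary parts, and view each identity as a linear combination of $X$-ray integrals along the geodesic $\gamma$ of $(M, g)$ of the $x_1$-moments $F_k(y) := \int_{\mathbb R} x_1^k f_0(x_1, y)\, dx_1$ and $H_k(y) := \int_{\mathbb R} x_1^k f_1(x_1, y)\, dx_1$, where $f = f_0 \, dx_1 + f_1$ and $f_1$ is pulled back from $M$. In the case $m = 0$ a straightforward induction on $k$ using only the injectivity of the $X$-ray transform on $C^\infty(M)$ gives $F_k \equiv 0$ for all $k$, and compact support of $f$ in $x_1$ then forces $f \equiv 0$. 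For $m = 1$, the $k = 0$ real part gives $F_0 \equiv 0$, so the primitive $p(x_1, y) := \int_{-\infty}^{x_1} f_0(s, y)\, ds$ is compactly supported; by Proposition \ref{prop:kernel}, $\tilde f := f - D^{e \oplus g} p$ still satisfies $\mc{C}\tilde f \equiv 0$ and has $\tilde f_0 \equiv 0$, so I may assume $f_0 \equiv 0$. The remaining identities split into two families involving $H_k$ alone: the $k$-th imaginary part gives $\int_\gamma H_k(\dot\gamma)\, dr = 0$, so $H_k = dq_k$ with $q_k|_{\partial M} = 0$ by injectivity on solenoidal 1-forms; and the $(k+1)$-th real part, combined with $H_0 = \dots = H_{k-1} \equiv 0$, collapses to $\int_\gamma r H_k(\dot\gamma)\, dr = 0$, which after an integration by parts in $r$ becomes $\int_\gamma q_k\, dr = 0$, forcing $q_k \equiv 0$ and $H_k \equiv 0$. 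This closes the induction and yields $f = D^{e \oplus g} p = dp$.

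For Part 2, I would apply Lemma \ref{lemma:complex-transport-reduction} to obtain $u \in C^\infty([-T, T] \times SM)$ with $(\partial_{x_1} + iX + A(\partial_{x_1} + iv)) u = -A(\partial_{x_1} + iv)$ and $u \equiv 0$ on the boundary component complementary to $\supp(A)$, and set $p := u + 1$. The transport equation becomes homogeneous, and a leafwise version of Proposition \ref{prop:G-invertible} shows $p$ is nowhere zero. On each (simply connected) bicharacteristic leaf $p \equiv 1$ outside a compact set, so $\Phi := \log p$ is smooth, compactly supported on the leaf, and satisfies $\partial_{\bar z} \Phi = -A(\partial_{\bar z})$. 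Compact support of $\Phi$ on each leaf delivers exactly the moment identities of Part 1 with $m = 1$ and source $-A$, hence there exists $q \in C^\infty_{\comp}((-T, T) \times M^\circ)$ with $-A = D^{e \oplus g} q$; setting $p_0 := e^{-q}$ gives $p_0 - 1 \in C^\infty_{\comp}$ and $A = p_0^{-1} d p_0$. The final consequence is then immediate: if $A$ and $B$ share complex parallel transports, scalar commutativity collapses \eqref{eq:complex-non-linear} to $\partial_{\bar z} G + (A - B)(\partial_{\bar z}) G = 0$ with $G|_{\partial D} = 1$, so $G - 1$ realises $\mc{C}_{A - B}(-(A - B)) \equiv 0$; applying what has just been proved to $A - B$ yields $p$ with $A - B = p^{-1} dp$, and a direct computation then shows $B = G_0^* A$ for the gauge $G_0 := 1/p$, which differs from the identity by a compactly supported perturbation.

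The main obstacle I expect is the inductive step in Part 1 with $m = 1$: the binomial expansion of $(x_1 + ir)^k$ mixes the $F_k$- and $H_k$-type contributions, and these must first be decoupled by the reduction $f_0 \equiv 0$; only then do the crossed terms $\int_\gamma r^j H_{k-j}(\dot\gamma)\, dr$ become tractable, and matching them back to plain $X$-ray integrals of functions requires integration by parts in $r$ along each geodesic, relying crucially on the boundary condition $q_k|_{\partial M} = 0$ inherited from the solenoidal decomposition. Everything else is either direct moment bookkeeping or leafwise $\overline{\partial}$-calculus of the sort already developed elsewhere in the paper.
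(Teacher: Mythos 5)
Your route for Item 1 is genuinely different from the paper's. The paper applies Lemma \ref{lemma:complex-transport-reduction}, takes the Fourier transform in $x_1$, recognises for each fixed small $\xi_1$ an \emph{attenuated} X-ray transform (with weight $e^{\xi_1 t}$), invokes the injectivity result of Dos Santos Ferreira--Kenig--Salo--Uhlmann for functions and $1$-forms, and concludes by analyticity of $\widehat{f}$ in $\xi_1$. You instead exploit compact support of $u_f$ on each leaf (your justification of this, via holomorphicity outside the supports plus vanishing on $\partial D$, is exactly the paper's Lemma \ref{lemma:complex-transport-reduction}) to derive polynomial moment identities $\int_{B(x,v)} z^k f(\otimes^m\partial_{\bar z})\,dz\,d\bar z=0$ and run an induction that only ever uses the \emph{unattenuated} transform on functions and solenoidal $1$-forms. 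That is a real trade: you avoid the attenuated transform and the Fourier/analyticity step at the price of the moment bookkeeping. Your Item 2 is essentially the paper's argument (invertibility of $1+u$ via Proposition \ref{prop:G-invertible}, leafwise logarithm, reduction to Item 1 with source $-A$), except that you feed the leafwise identities directly into the moment machinery rather than gluing $\Psi$ into a global function on $\mathbb{R}\times SM$; that shortcut is legitimate.

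There is, however, one step that fails as written: ``separate real and imaginary parts.'' Here $f$ is $\mathbb{C}$-valued ($r=1$ means values in $\mathbb{C}^{1\times 1}$), so in the $k=0$ identity $\int_\gamma F_0\,dr + i\int_\gamma H_0(\dot\gamma)\,dr=0$ both integrals are complex, and taking real/imaginary parts mixes $F_0$ with $H_0$ rather than decoupling them; the same problem recurs at each inductive step, where you must separate $\int_\gamma H_{k+1}(\dot\gamma)\,dr$ from $(k+1)\int_\gamma q_k\,dr$. (Your $m=0$ induction is immune: the triangular structure alone gives $\int_\gamma F_k=0$ once the lower moments vanish.) The correct decoupling mechanism is the orientation reversal $(x,v)\mapsto(\gamma(L),-\dot\gamma(L))$, which is available because $\mc{C}f$ vanishes on all of $\partial_-SM$. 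Note this is not a naive parity argument on a single identity: reversal replaces $r$ by $L-r$ and $\partial_{\bar z}$ by $\partial_z$, so it produces a \emph{second} family of moment identities. Under the induction hypothesis $H_0=\dots=H_{k-1}\equiv 0$ the cross terms drop and one gets the pair $\int_\gamma H_{k+1}(\dot\gamma)\,dr \mp (k+1)i\int_\gamma q_k\,dr=0$ (using $q_k|_{\partial M}=0$ to integrate by parts in $r$), whose difference yields $\int_\gamma q_k=0$ and hence $q_k\equiv 0$, closing the induction. With that repair the proposal is sound; without it, the $m=1$ case of Item 1 (and hence Item 2, which relies on it) is not proved.
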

\begin{proof}
    \emph{Item 1 for $m = 0$.} Apply Lemma \ref{lemma:complex-transport-reduction} to make the reduction to a transport equation. Take the Fourier transform $\widehat{\bullet}$ in the variable $x_1$ to get
    \[
        (\xi_1 + X(x, v)) \widehat{u}(\xi_1, x, v) = -i\widehat{f}(\xi_1, x), \quad \xi_1 \in \mathbb{R}.
    \]
    For fixed $\xi_1$, this is an \emph{attenuated} ray transform: indeed, for a maximal geodesic $\gamma: [0, L] \to M$ connecting boundary points, integrating by parts this yields
    \[
        \int_0^L e^{\xi_1 t} \widehat{f}(\xi_1, \gamma(t))\,dt = 0.
    \]
    For $|\xi_1|$ small enough, this is known to give $\widehat{f}(\xi_1, x) \equiv 0$, see \cite[Theorem 7.1]{dos-santos-ferreira-keng-salo-uhlmann-09}, which by analyticity of $\widehat{f}$ in the $\xi_1$ variable gives $f \equiv 0$.
    \medskip

    \emph{Item 1 for $m = 1$.} Write $f = f_1 dx_1 + f_{\mathrm{tr}}$, where $f_{\mathrm{tr}}$ is an $x_1$ dependent $1$-form on $M$; if $(x_i)_{i = 2}^n$ are global coordinates on $M$, then $f_{\mathrm{tr}} = \sum_{i = 2}^n f_i dx_i$. Similarly to the preceding paragraph, by Lemma \ref{lemma:complex-transport-reduction} we have
    \[
        (\xi_1 + X(x, v)) \widehat{u}(\xi_1, x, v) = -i\widehat{f}_1(\xi_1, x) + \widehat{f}_{\mathrm{tr}}(\xi_1, x, v), \quad \xi_1 \in \mathbb{R},
    \]
    where $\widehat{f}_{\mathrm{tr}}(\xi_1, x, v) := \sum_{j = 2}^n \widehat{f}_j(\xi_1, x) dx_j(v)$. By injectivity of the attenuated $X$-ray transform of $1$-forms, Theorem \cite[Theorem 7.1]{dos-santos-ferreira-keng-salo-uhlmann-09}, this gives for all $|\xi_1|$ small enough that $p(\xi_1, x) :=\widehat{u}(x_1, x, v)$ does not depend on the $v$ variable; hence by analyticity of the Fourier transform $u$ does not depend on the velocity variable either. Therefore we can identify (using \eqref{eq:fundamental-relation})
    \[
        \xi_1 \widehat{u}(\xi_1, x) = -i \widehat{f}_1(\xi_1, x), \quad \pi_1^* d_x \widehat{u}(\xi_1, x) = \widehat{f}_{\mathrm{tr}}(\xi_1, x, v). 
    \]
    Taking the inverse Fourier transform we obtain $\partial_{x_1} u \equiv f_1$ and $d_xu \equiv f_{\mathrm{tr}}$, which combined is equivalent to $f \equiv du$ on $\mathbb{R} \times M$, completing the proof.
    \medskip

    \emph{Item 2.} Observe first that $A$ and $B$ have the same complex parallel transport if and only if $A - B$ has the same complex parallel transport as the trivial connection, so the second claim follows from the first one. By Lemma \ref{lemma:complex-transport-reduction}, there exists $u$ with compact support such that
    \[
        (\partial_{x_1} + iX(x, v) + A(\partial_{x_1} + iv)) (1+ u) = 0.
    \]
    Note that this is equivalent to saying that $A$ has the same parallel transport as the trivial connection, so Proposition \ref{prop:G-invertible} implies $1 + u$ is nowhere zero. Therefore $1 + u = e^{\Psi}$ in each bicharacteristic leaf for some smooth $\Psi$ which we can assume is zero for $|z|$ large. Note that $\partial_{\bar{z}} \Psi = - A(\partial_{\bar{z}})$ and by uniqueness of solutions (for instance by invoking Lemma \ref{lemma:del-bar-source}) these solutions must glue together to give $\Psi \in C^\infty(\mathbb{R} \times SM)$ such that
    \[
        (\partial_{x_1} + i X) \Psi = - A(\partial_{x_1} + iv).
    \]
     By Item 1, $\Psi$ and so in turn $1 + u$ are independent of the velocity variable. Then $p := 1 + u$ in fact satisfies the required condition.
\end{proof}

\begin{remark}\rm
    The strategy of proof of Proposition \ref{prop:injectivity}, Item 2, is very similar to \cite[Section 6.2]{dos-santos-ferreira-keng-salo-uhlmann-09}, as well as \cite[Theorem 6.1]{cekic-17}. In fact, the latter proof is also valid in the more general case where $(M, g)$ has injective $X$-ray transform on functions and $1$-tensors (with the analogous definition of $\mc{C}$ in the non-simple case). Moreover, \eqref{eq:homomorphism-connection-complex-transport} and \eqref{eq:endomorphism-potential-complex-transport} can both be interpreted as statements about a certain complex ray transform vanishing.
\end{remark}

In fact, we can also prove
\begin{proposition}\label{prop:dimension-two}
    Assume $r = 1$; if the $X$-ray transform on $m$-tensors in $(M, g)$ is injective, then the complex ray transform $\mc{C}_m$ is also injective. In particular, this holds if $\dim M = 2$.
\end{proposition}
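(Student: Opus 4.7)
The plan is to reduce to a transport equation on $SM$ via Lemma \ref{lemma:complex-transport-reduction}, apply a Fourier transform in $x_1$, and exploit injectivity of the $X$-ray transform on $m$-tensors on the transversal factor.

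Concretely, $\mc{C}_m(f) \equiv 0$ gives by Lemma \ref{lemma:complex-transport-reduction} a compactly supported $u \in C^\infty([-T,T] \times SM)$ satisfying $(\partial_{x_1} + iX) u = f(\otimes^m(\partial_{x_1} + iv))$. Writing $f = \sum_{j=0}^m \operatorname{Sym}(\otimes^j dx_1 \otimes f_{m-j})$ as in \eqref{eq:unique-decomposition} with $f_{m-j}$ trace-free on $M$, the computation \eqref{eq:complex-fundamental-relation} shows that the right-hand side equals $\sum_{j=0}^m i^{m-j}\pi^*_{m-j}f_{m-j}$. Taking Fourier transform in $x_1$ (so $\hat u$ and $\hat f_{m-j}$ are entire in $\xi_1$ by compact support) yields the attenuated transport equation
\[
(X + \xi_1)\hat u(\xi_1, x, v) = \sum_{j=0}^m i^{m-j-1}\pi_{m-j}^*\hat f_{m-j}(\xi_1, x, v), \quad \hat u(\xi_1, \cdot)\big|_{\partial SM} = 0.
\]

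For $|\xi_1|$ small, the next step will use that injectivity of the unattenuated $X$-ray transform on $m$-tensors on $(M,g)$ extends by a perturbation argument to the attenuated setting. Combined with a Fourier-mode analysis exploiting the raising-lowering decomposition of $X$ on the spaces $\Omega_k$, this forces $\hat u(\xi_1, \cdot)$ to have Fourier degree at most $m-1$, so it is the pullback of trace-free tensors $\hat p_k(\xi_1, \cdot)$ of degrees $k \leq m-1$ on $M$. By analyticity in $\xi_1$ the structural relation extends to all $\xi_1$, and inverse Fourier transform produces a compactly supported $p \in C^\infty_{\comp}((-T,T) \times M^\circ, \otimes_S^{m-1} T^*(\mathbb{R} \times M))$ such that $f = D^{e\oplus g}p$, by matching Fourier modes via \eqref{eq:complex-fundamental-relation}. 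For $\dim M = 2$ the hypothesis holds by Paternain-Salo-Uhlmann's injectivity of the $X$-ray transform on tensors of all orders on simple surfaces \cite{Paternain-Salo-Uhlmann-12}, giving the corollary.

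The main obstacle is precisely the extension from unattenuated to attenuated injectivity when the source term is a sum of pullbacks of tensors of mixed degrees rather than a pure $m$-tensor; this requires careful bookkeeping of the Fourier-mode recurrences induced by $X_\pm$, in the same spirit as the cases $m = 0, 1$ of Proposition \ref{prop:injectivity} which rely on \cite[Theorem 7.1]{dos-santos-ferreira-keng-salo-uhlmann-09}.
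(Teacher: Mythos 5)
Your setup matches the paper's: the reduction via Lemma \ref{lemma:complex-transport-reduction}, the decomposition \eqref{eq:unique-decomposition} of $f$, the identification of the right-hand side through \eqref{eq:complex-fundamental-relation}, the Fourier transform in $x_1$, and the final reconstruction of $p$ by matching Fourier modes are all correct and are exactly the steps of the actual proof. The problem is the step you yourself flag as the ``main obstacle'': you propose to bound $\deg\widehat{u}(\xi_1,\cdot)$ for small $|\xi_1|$ by extending the unattenuated tensor tomography hypothesis to the attenuated transform $(X+\xi_1)$ ``by a perturbation argument.'' No such perturbation result is available from the stated hypothesis -- the proposition only assumes injectivity of the \emph{unattenuated} $X$-ray transform on $m$-tensors on a general $(M,g)$, and attenuated tensor tomography (even with small constant attenuation, and even more so with a source that is a sum of pullbacks of tensors of mixed degrees) does not follow from it by any standard argument. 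This is a genuine gap, not mere bookkeeping.

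The paper's proof avoids attenuated injectivity altogether. One only ever inverts the equation at $\xi_1=0$: setting $\xi_1=0$ gives $X\widehat{u}(0,\cdot)=\sum_j i^{m-j}\pi_{m-j}^*\widehat{f}_{m-j}(0,\cdot)$, so the hypothesis yields $\deg\widehat{u}(0,\cdot)\leq m-1$. Differentiating the transported equation $k$ times in $\xi_1$ and evaluating at $\xi_1=0$ gives $X\,\partial_{\xi_1}^k\widehat{u}(0,\cdot)=-k\,\partial_{\xi_1}^{k-1}\widehat{u}(0,\cdot)+(\text{pullbacks of tensors of degree}\leq m)$, and inductively the right-hand side has degree $\leq m$, so the \emph{unattenuated} hypothesis applies again to give $\deg\partial_{\xi_1}^k\widehat{u}(0,\cdot)\leq m-1$ for every $k$. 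Since $u$ is compactly supported in $x_1$, each Fourier mode $\widehat{u}_i$ is entire in $\xi_1$; the vanishing of all $\xi_1$-derivatives at $0$ of the modes with $i\geq m$ then forces $\widehat{u}_i\equiv 0$ for $i\geq m$ and all $\xi_1$, i.e.\ $\deg u\leq m-1$. From there your reconstruction of $p$ goes through as written. (Minor point: the dimension-two case is quoted from \cite{Paternain-Salo-Uhlmann-13}, not \cite{Paternain-Salo-Uhlmann-12}.)
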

\begin{proof}
    Assume $f$ satisfies $\mc{C}_m f = 0$; write $f = \sum_j \operatorname{Sym}(\otimes^j dx_1 \otimes f_{m - j})$ for the decomposition of $f$ as in \eqref{eq:unique-decomposition}. By Lemma \ref{lemma:complex-transport-reduction} we get a smooth compactly supported function $u$ that satisfies
    \begin{equation}\label{eq:complex-ray-tensors-transport-form-local}
        (\partial_{x_1} + iX) u = f\big(\otimes^{m} (\partial_{x_1} + iv)\big).
    \end{equation}
    Applying the Fourier transform in the $x_1$ direction, we get
    \[
        (\xi_1 + X) \widehat{u} = \sum_{j} i^{m - j} \widehat{\pi_{m - j}^* f_{m - j}}(\xi_1, x, v).
    \]
    Now using the injectivity of the $X$-ray transform on $(M, g)$ for $\xi_1 = 0$ we get $\deg \widehat{u}(0, x, v) \leq m - 1$, and inductively by differentiating the equation above in $\xi_1$, we get that 
    \begin{equation}\label{eq:bla-bla}
        \deg \partial_{\xi_1}^k \widehat{u}(0, x, v) \leq m - 1, \quad k \in \mathbb{N}_0,
    \end{equation}
    see \cite[Proof of Theorem 6.1]{cekic-17} for more details. Using the analytic expansion at zero, this implies that $\deg \widehat{u}(\xi_1, x, v) \leq m - 1$ for small $|\xi_1|$ and by using analyticity again for all $\xi_1$. More precisely, for the latter claim we write $u = \sum_i u_i$ as an expansion in Fourier modes, $\deg u_i = i$, and notice that $\widehat{u} = \sum_i \widehat{u}_i$ is the expansion into Fourier modes of $\widehat{u}$ as the Fourier transform commutes with vertical derivatives; also, each $\widehat{u}_i$ is analytic in $\xi_1$. For $i \geq m$, since $\widehat{u}_{i} \equiv 0$ for $|\xi_1|$ small enough by \eqref{eq:bla-bla}, by analyticity we have $\widehat{u}_i \equiv 0$ and the claim follows. In particular, we get that $\deg u \leq m - 1$. 

    Now write 
    \[
        u = \sum_{j = 0}^{m - 1} i^{m - 1 - j} \pi_{m - 1 - j} ^* p_{m - 1 - j}
    \]
    for some $x_1$-dependent (but $dx_1$-independent), trace-free, symmetric $(m - 1 - j)$-tensors $p_{m - 1 - j}$. Setting $p := \sum_j \operatorname{Sym}(\otimes^j dx_1 \otimes p_{m - j - 1})$, the computation in \eqref{eq:complex-fundamental-relation} shows that
    \[
        D^{e \oplus g}p \big(\otimes^{m+ 1} (\partial_{x_1} + iv)\big) = (\partial_{x_1} + i X) u = f \big(\otimes^{m+ 1} (\partial_{x_1} + iv)\big),
    \]
    where in the last equality we used \eqref{eq:complex-ray-tensors-transport-form-local}. By the uniqueness of the decomposition \eqref{eq:unique-decomposition}, we must have $D^{e \oplus g}p \equiv f$ which completes the proof of injectivity of $\mc{C}_m$.
    
    The statement about dimension two is a consequence of \cite{Paternain-Salo-Uhlmann-13}.
\end{proof}

\bibliographystyle{alpha}
\bibliography{Biblio}

\begin{thebibliography}{DSFKSU09}

\bibitem[AAFG17]{angulo-ardoy-faraco-guijarro-ruiz-17}
Pablo Angulo-Ardoy, Daniel Faraco, and Luis Guijarro.
\newblock Sufficient conditions for the existence of limiting {C}arleman
  weights.
\newblock {\em Forum Math. Sigma}, 5:Paper No. e7, 25, 2017.

\bibitem[AGTU13]{albin-guillarmou-tzou-uhlmann-13}
Pierre Albin, Colin Guillarmou, Leo Tzou, and Gunther Uhlmann.
\newblock Inverse boundary problems for systems in two dimensions.
\newblock {\em Ann. Henri Poincar\'{e}}, 14(6):1551--1571, 2013.

\bibitem[BP23]{Bohr-Paternain-23}
Jan Bohr and Gabriel~P. Paternain.
\newblock The {Transport} {Oka-Grauert} principle for simple surfaces.
\newblock {\em Journal de l{\textquoteright}\'Ecole polytechnique {\textemdash}
  Math\'ematiques}, 10:727--769, 2023.

\bibitem[Cek17]{cekic-17}
Mihajlo Ceki\'{c}.
\newblock Calder\'{o}n problem for connections.
\newblock {\em Comm. Partial Differential Equations}, 42(11):1781--1836, 2017.

\bibitem[Cek20]{cekic-20}
Mihajlo Ceki\'{c}.
\newblock Calder\'{o}n problem for {Y}ang-{M}ills connections.
\newblock {\em J. Spectr. Theory}, 10(2):463--513, 2020.

\bibitem[CL21]{Cekic-Lefeuvre-20}
Mihajlo {Ceki{\'c}} and Thibault {Lefeuvre}.
\newblock {Generic Dynamical Properties of Connections on Vector Bundles}.
\newblock {\em International Mathematics Research Notices}, 04 2021.
\newblock rnab069.

\bibitem[CL25]{Cekic-Lefeuvre-21-1}
Mihajlo Ceki\'c and Thibault Lefeuvre.
\newblock The holonomy inverse problem.
\newblock {\em J. Eur. Math. Soc. (JEMS)}, 27(6):2187--2250, 2025.

\bibitem[CLOP22]{chen-lassas-oksanen-paternain-22}
Xi~Chen, Matti Lassas, Lauri Oksanen, and Gabriel Paternain.
\newblock Detection of {H}ermitian connections in wave equations with cubic
  non-linearity.
\newblock {\em J. Eur. Math. Soc. (JEMS)}, 24(7):2191--2232, 2022.

\bibitem[CLR20]{cekic-lin-rueland-20}
Mihajlo Ceki\'{c}, Yi-Hsuan Lin, and Angkana R\"{u}land.
\newblock The {C}alder\'{o}n problem for the fractional {S}chr\"{o}dinger
  equation with drift.
\newblock {\em Calc. Var. Partial Differential Equations}, 59(3):Paper No. 91,
  46, 2020.

\bibitem[DSFKSU09]{dos-santos-ferreira-keng-salo-uhlmann-09}
David Dos Santos~Ferreira, Carlos~E. Kenig, Mikko Salo, and Gunther Uhlmann.
\newblock Limiting {C}arleman weights and anisotropic inverse problems.
\newblock {\em Invent. Math.}, 178(1):119--171, 2009.

\bibitem[Esk01]{eskin-01}
G.~Eskin.
\newblock Global uniqueness in the inverse scattering problem for the
  {S}chr\"{o}dinger operator with external {Y}ang-{M}ills potentials.
\newblock {\em Comm. Math. Phys.}, 222(3):503--531, 2001.

\bibitem[FIKO21]{feizmohammadi-ilmavirta-kian-oksanen-21}
Ali Feizmohammadi, Joonas Ilmavirta, Yavar Kian, and Lauri Oksanen.
\newblock Recovery of time-dependent coefficients from boundary data for
  hyperbolic equations.
\newblock {\em J. Spectr. Theory}, 11(3):1107--1143, 2021.

\bibitem[GK24]{gabdurakhmanov-kokarev-21}
Ravil Gabdurakhmanov and Gerasim Kokarev.
\newblock On calderon’s problem for the connection laplacian.
\newblock {\em Proceedings of the Royal Society of Edinburgh: Section A
  Mathematics}, page 1–26, 2024.

\bibitem[GPSU16]{Guillarmou-Paternain-Salo-Uhlmann-16}
Colin Guillarmou, Gabriel~P. Paternain, Mikko Salo, and Gunther Uhlmann.
\newblock The {X}-ray transform for connections in negative curvature.
\newblock {\em Comm. Math. Phys.}, 343(1):83--127, 2016.

\bibitem[Gui17]{Guillarmou-17-1}
Colin Guillarmou.
\newblock Invariant distributions and {X}-ray transform for {A}nosov flows.
\newblock {\em J. Differential Geom.}, 105(2):177--208, 2017.

\bibitem[Hab18]{haberman-18}
Boaz Haberman.
\newblock Unique determination of a magnetic {S}chr\"{o}dinger operator with
  unbounded magnetic potential from boundary data.
\newblock {\em Int. Math. Res. Not. IMRN}, (4):1080--1128, 2018.

\bibitem[Isa17]{isakov-17}
Victor Isakov.
\newblock {\em Inverse problems for partial differential equations}, volume 127
  of {\em Applied Mathematical Sciences}.
\newblock Springer, Cham, third edition, 2017.

\bibitem[Ken23]{Kenig-23}
Carlos~E. Kenig.
\newblock Remarks on a boundary value problem for a matrix valued
  {$\overline\partial$} equation.
\newblock {\em Rev. Un. Mat. Argentina}, 66(1):207--211, 2023.

\bibitem[KOP18]{kurylev-oksanen-paternain-18}
Yaroslav Kurylev, Lauri Oksanen, and Gabriel~P. Paternain.
\newblock Inverse problems for the connection {L}aplacian.
\newblock {\em J. Differential Geom.}, 110(3):457--494, 2018.

\bibitem[KU18]{krupchyk-uhlmann-18}
Katya Krupchyk and Gunther Uhlmann.
\newblock Inverse problems for magnetic {S}chr\"{o}dinger operators in
  transversally anisotropic geometries.
\newblock {\em Comm. Math. Phys.}, 361(2):525--582, 2018.

\bibitem[LOSU20]{lassas-oksanen-stefanov-uhlmann-20}
Matti Lassas, Lauri Oksanen, Plamen Stefanov, and Gunther Uhlmann.
\newblock The light ray transform on {L}orentzian manifolds.
\newblock {\em Comm. Math. Phys.}, 377(2):1349--1379, 2020.

\bibitem[NU02]{nakamura-uhlmann-02}
G.~Nakamura and G.~Uhlmann.
\newblock Complex geometrical optics solutions and pseudoanalytic matrices.
\newblock In {\em Ill-posed and inverse problems}, pages 305--338. VSP, Zeist,
  2002.

\bibitem[Pat13]{paternain-13}
Gabriel~P. Paternain.
\newblock Inverse problems for connections.
\newblock In {\em Inverse problems and applications: inside out. {II}},
  volume~60 of {\em Math. Sci. Res. Inst. Publ.}, pages 369--409. Cambridge
  Univ. Press, Cambridge, 2013.

\bibitem[PSU12]{Paternain-Salo-Uhlmann-12}
Gabriel~P. Paternain, Mikko Salo, and Gunther Uhlmann.
\newblock The attenuated ray transform for connections and {H}iggs fields.
\newblock {\em Geom. Funct. Anal.}, 22(5):1460--1489, 2012.

\bibitem[PSU13]{Paternain-Salo-Uhlmann-13}
Gabriel~P. Paternain, Mikko Salo, and Gunther Uhlmann.
\newblock Tensor tomography on simple surfaces.
\newblock {\em Invent. Math.}, 193(1):229--247, 2013.

\bibitem[{Sal}17]{salo-17}
Mikko {Salo}.
\newblock {The Calder{\'o}n problem and normal forms}.
\newblock {\em arXiv e-prints}, page arXiv:1702.02136, February 2017.

\bibitem[{St-}24]{st-amant-24}
Simon {St-Amant}.
\newblock {Gaussian beams and inverse problems for connections at high fixed
  frequency}.
\newblock {\em arXiv e-prints}, page arXiv:2402.13854, February 2024.

\bibitem[Ste99]{steenrod-99}
Norman Steenrod.
\newblock {\em The topology of fibre bundles}.
\newblock Princeton Landmarks in Mathematics. Princeton University Press,
  Princeton, NJ, 1999.
\newblock Reprint of the 1957 edition, Princeton Paperbacks.

\bibitem[SU87]{sylvester-uhlmann-87}
John Sylvester and Gunther Uhlmann.
\newblock A global uniqueness theorem for an inverse boundary value problem.
\newblock {\em Ann. of Math. (2)}, 125(1):153--169, 1987.

\bibitem[SY24]{selim-yan-23}
Salem Selim and Lili Yan.
\newblock Partial data inverse problems for magnetic schrödinger operators on
  conformally transversally anisotropic manifolds.
\newblock {\em Asymptotic Analysis}, 140(1-2):25--36, 2024.

\bibitem[Tet22]{tetlow-22}
Alexander Tetlow.
\newblock Recovery of a time-dependent {H}ermitian connection and potential
  appearing in the dynamic {S}chr\"{o}dinger equation.
\newblock {\em SIAM J. Math. Anal.}, 54(2):1347--1369, 2022.

\bibitem[Uhl09]{uhlmann-09}
G.~Uhlmann.
\newblock Electrical impedance tomography and {C}alder\'{o}n's problem.
\newblock {\em Inverse Problems}, 25(12):123011, 39, 2009.

\bibitem[Val23]{valero-22}
Carlos Valero.
\newblock A uniqueness result for the {C}alder\'on problem for
  {$U(N)$}-connections coupled to spinors.
\newblock {\em J. Spectr. Theory}, 13(4):1145--1171, 2023.

\end{thebibliography}

\end{document}